\DeclareMathOperator\arctanh{arctanh}
\DeclareMathOperator\dist{dist}
\DeclareMathOperator\supp{supp}
\newcommand{\norm}[1]{\left\lVert#1\right\rVert}
\newtheorem{theorem}{Theorem}[section]
\newtheorem{proposition}[theorem]{Proposition}
\newtheorem{corollary}[theorem]{Corollary}
\newtheorem{lemma}[theorem]{Lemma}
\theoremstyle{definition}
\newtheorem{example}[theorem]{Example}
\theoremstyle{remark}
\newtheorem*{remark}{Remark}
\renewcommand\Re{\operatorname{Re}}
\renewcommand\Im{\operatorname{Im}}
\author{Athanasios Kouroupis}
\address{Department of Mathematical Sciences, Norwegian University of Science and Technology (NTNU), 7491 Trondheim, Norway}
\email{athanasios.kouroupis@ntnu.no}
\author{Karl-Mikael Perfekt}
\address{Department of Mathematical Sciences, Norwegian University of Science and Technology (NTNU), 7491 Trondheim, Norway}
\email{karl-mikael.perfekt@ntnu.no}
\title{Composition operators on weighted Hilbert spaces of Dirichlet series}
\date{}
\begin{document}
	\begin{abstract}
		We study composition operators of characteristic zero on weighted Hilbert spaces of Dirichlet series. For this purpose we demonstrate the existence of weighted mean counting functions associated with the Dirichlet series symbol, and provide a corresponding change of variables formula for the composition operator. This leads to natural necessary conditions for the boundedness and compactness. For Bergman-type spaces, we are able to show that the compactness condition is also sufficient, by employing a Schwarz-type lemma for Dirichlet series.
	\end{abstract}
	\maketitle
	\section{Introduction}
	For $a\leq 1$ we define the weighted Hilbert space $\mathcal{D}_a$ of Dirichlet series as 
	$$\mathcal{D}_a=\left\{f(s)=\sum_{n\geq1}\frac{a_n}{n^s}:\norm{f}_a^2=|a_1|^2+\sum_{n\geq2}|a_n|^2\log(n)^a<\infty \right\}.$$
	The space $\mathcal{D}_{0}$ coincides with the Hardy space $\mathcal{H}^2$ of Dirichlet series with square summable coefficients, which was systematically studied in an influential article of Hedenmalm, Lindqvist, and Seip \cite{HLS97}. For $a < 0$ we refer to $\mathcal{D}_a$ as a Bergman space and for $a>0$ as a Dirichlet space, see \cite{MCC04}.
	
By the Cauchy--Schwarz inequality, $\mathcal{D}_a$ is a space of analytic functions in the half-plane $\mathbb{C}_{\frac{1}{2}}$, where $\mathbb{C}_\theta=\{s\in \mathbb{C}:\Re s>\theta\}$. Therefore, if $\psi:\mathbb{C}_{\frac{1}{2}}\rightarrow\mathbb{C}_{\frac{1}{2}}$ is an analytic function, the composition operator $C_\psi (f)=f\circ\psi$ defines an analytic function in $\mathbb{C}_{\frac{1}{2}}$ for every $f\in\mathcal{D}_a$. Gordon and Hedenmalm \cite{GH99} determined the class $\mathfrak{G}$ of symbols which generate bounded composition operators on the Hardy space $\mathcal{H}^2$.  The Gordon--Hedenmalm class $\mathfrak{G}$ consists of all functions $\psi(s) = c_0s+\varphi(s)$, where $c_0$ is a non-negative integer, called the characteristic of $\psi$,  and $\varphi$ is a Dirichlet series such that:
	\begin{enumerate}[(i)]
		\item If $c_0=0$, then $\varphi(\mathbb{C}_0)\subset\mathbb{C}_\frac{1}{2}$.\label{item1}
		\item If $c_0\geq 1$, then $\varphi(\mathbb{C}_0)\subset\mathbb{C}_0$ or $\varphi\equiv i\tau$ for some $\tau\in\mathbb{R}$.\label{item2}
	\end{enumerate}
We will use the notation $\mathfrak{G}_0$ and $\mathfrak{G}_{\geq 1}$ for the subclasses of symbols that satisfy (\ref{item1}) and (\ref{item2}), respectively. In either case, the mapping properties of $\varphi$ and Bohr's theorem imply that the Dirichlet series $\varphi$ necessarily has abscissa of uniform convergence $\sigma_u(\varphi)\leq0$, see \cite[Theorem 8.4.1]{QQ20}.
	
	By what is essentially the original argument of Gordon and Hedenmalm, the condition that $\psi \in\mathfrak{G}$ is necessary for a composition operator $C_\psi : \mathcal{D}_a\rightarrow\mathcal{D}_a$ to be bounded. In the Bergman case $a < 0$, this is also known to be sufficient \cite{Bail15, BB16}. When $\psi \in \mathfrak{G}_0$, the proof of boundedness of $C_\psi : \mathcal{D}_a\rightarrow\mathcal{D}_a$, $a < 0$, due to Bailleul and Brevig \cite{BB16}, has a rather serendipitous flavor. In Section 3 we will supply a more systematic proof based on a Schwarz lemma for Dirichlet series, Lemma \ref{3.4}.
	
	Beyond this, we will focus on composition operators induced by symbols $\varphi \in \mathfrak{G}_0$. The compact operators $C_\varphi \colon \mathcal{H}^2 \to \mathcal{H}^2$ were characterized only very recently in \cite{BP21}, in terms of the behavior of the mean counting function
	$$M_{\varphi, 1}(w) = \lim_{\sigma \to 0^+}\lim_{T \to \infty}  \frac{\pi}{T}\sum\limits_{\substack{s\in\varphi^{-1}(\{w\})\\
			|\Im s|<T\\
			\sigma<\Re s<\infty}} \Re s ,\qquad w\neq\varphi(+\infty).$$
	The main purpose of this article is to explore analogous tools and results in the weighted setting.
	
	From Carlson's theorem \cite[Lemma 3.2]{HLS97} one deduces the following formula of Littlewood--Paley type,
	\begin{equation}\label{1}
	\norm{f}_a^2=|f(+\infty)|^2+\frac{2^{1-a}}{\Gamma(2-a)}\lim\limits_{\sigma_0\rightarrow0^+}\lim\limits_{T\rightarrow \infty}\frac{1}{T}\int\limits_{\sigma_0}^{\infty}\int\limits_{-T}^{T}\left|f'(\sigma+it)\right|^2\sigma^{1-a} \, dt \, d\sigma,
	\end{equation}
valid for $f\in \mathcal{D}_a$ such that $\sigma_u(f)\leq0$. From this point of view, the space $\mathcal{D}_a$ is analogous to the weighted Hilbert space $D_\alpha$, consisting of those holomorphic functions $g$ on the unit disk such that
\begin{equation}\label{eq:disknorm}
\norm{g}_{D_\alpha}^2=|g(0)|^2+\int\limits_\mathbb{D}|g'(z)|^2(1-|z|^2)^\alpha \, dA(z)<\infty,
\end{equation}
	where $\alpha=1-a\geq0$ and $dA(z)=dx \, dy,\,z=x+iy$. By the results of \cite{KL12, PP13, SHAP87}, a holomorphic self-map of the unit disk $\phi \colon \mathbb{D}\to \mathbb{D}$ induces a compact composition operator on $D_\alpha,$ $\alpha>0$, if and only if  
	\begin{equation}
	\lim\limits_{|z|\rightarrow1^-}\frac{N_{\phi,\alpha}(z)}{(1-|z|^2)^\alpha}=0,
	\end{equation}
	where for $\alpha=1$, $N_{\phi,1}$ is the classical Nevanlinna counting function $$N_{\phi}(z)=N_{\phi,1}(z)=\sum\limits_{z_i\in \phi^{-1}(\{z\})}\log\frac{1}{|z_i|}, \qquad z \neq \phi(0),$$ 
	 and for $\alpha \neq 1$, $N_{\phi,\alpha}$ is the generalized Nevanlinna counting function
	$$N_{\phi,\alpha}(z)=\sum\limits_{z_i\in \phi^{-1}(\{z\})}(1-|z_i|^2)^\alpha.$$

	A key step in the disk setting is to introduce a non-injective change of variables in \eqref{eq:disknorm}, resulting in what is known as a Stanton formula. In our setting, for $\varphi\in\mathfrak{G}_0$, making the change of variables in \eqref{1} yields that 
	\begin{equation*}
	\norm{C_\varphi(f)}_{a}^2=|f(\varphi(+\infty))|^2+\frac{2^{1-a}}{\pi\Gamma(2-a)}\lim_{\sigma\rightarrow0^+}\lim_{T\rightarrow \infty}\int\limits_{\mathbb{C}_{\frac{1}{2}}}|f'(w)|^2M_{\varphi,1-a}(w, \sigma, T) \, dA(w),
	\end{equation*}
	where
	$$M_{\varphi,a}(w, \sigma, T)=\frac{\pi}{T}\sum\limits_{\substack{s\in\varphi^{-1}(\{w\})\\
			|\Im s|<T\\
			\sigma<\Re s<\infty}}\left(\Re s\right)^a,\qquad w\neq\varphi(+\infty).$$
	For a Dirichlet series $\varphi$ with abscissa of uniform convergence $\sigma_u(\varphi)\leq 0$, we therefore introduce the weighted mean counting functions
	$$M_{\varphi,a}(w,\sigma)=\lim_{T\rightarrow \infty}\frac{\pi}{T}\sum\limits_{\substack{s\in\varphi^{-1}(\{w\})\\
			|\Im s|<T\\
			\sigma<\Re s<\infty}}\left(\Re s\right)^{a},\qquad w\neq\varphi(+\infty),$$
	and
	$$M_{\varphi,a}(w)=\lim_{\sigma\rightarrow0^+}M_{\varphi,a}(w,\sigma),$$
	if these limits exist.
	
	Jessen and Tornehave \cite[Theorem 31]{JT45} studied the unweighted counting function $M_{\varphi,0}(w,\sigma)$ in the context of Lagrange's mean motion problem. They proved that the counting function exists for $\sigma>0$ and $w\neq\varphi(+\infty)$, and that it satisfies
	$$M_{\varphi,0}(w,\sigma)=-\mathcal{J}'_{\varphi-w}(\sigma^+),$$
	where $\mathcal{J}'_{\phi-w}(\sigma^+)$ is the right-derivative of the Jessen function,
	\begin{equation} \label{eq:jessenfcn}
	\mathcal{J}_{\varphi-w}(\sigma)=\lim\limits_{T\rightarrow\infty}\frac{1}{2T}\int\limits_{-T}^T\log|\varphi(\sigma+it)-w| \, dt.
	\end{equation}
	On the basis of this and Littlewood's lemma,  it was demonstrated in \cite{BP21} that the weighted mean counting function $M_{\varphi,1}(w,\sigma)$ also exists for $\sigma > 0$ and $w\neq \varphi(+\infty)$.  Additionally, if $\varphi$ belongs to the Nevanlinna class of Dirichlet series 
	$\mathcal{N}_u$, that is, $\sigma_u(\varphi)\leq0$ and $$\limsup\limits_{\sigma\rightarrow0^+}\frac{1}{2T}\int\limits_{-T}^T\log^+|\varphi(\sigma+it)|dt<+\infty,$$
	then $$M_{\varphi,1}(w)=\lim\limits_{\sigma\rightarrow0^+}\mathcal{J}_{\varphi-w}(\sigma)-\log|\varphi(+\infty)-w|<+\infty.$$
	
	In Section~4 we will investigate the existence of the weighted mean counting functions $M_{\varphi, a}$.
	\begin{theorem}\label{1.1}
		For $a \in \mathbb{R}$, let $\varphi$ be a Dirichlet series such that $\sigma_u(\varphi)\leq 0$ and $\varphi(+\infty)\neq w$. Then the counting function $M_{\varphi,a}(w,\sigma)$ exists and is right-continuous on $\sigma>0$. Furthermore,
		\begin{equation}\label{3}
		M_{\varphi,a}(w,\sigma)=M_{\varphi,0}(w,\sigma)\sigma^{a}+a\int\limits_{\sigma}^{\infty}t^{a-1}M_{\varphi,0}(w,t) \, dt.
		\end{equation}
		For $\sigma_\infty > 0$ sufficiently large, depending on $\varphi$ and $w$, we also have that
		\begin{equation} \label{eq:meanintbyparts}
			\begin{aligned}
		M_{\varphi,a}(w,\sigma)& - M_{\varphi,0}(w,\sigma)\sigma^{a} = \\ &a\sigma^{a-1}\mathcal{J}_{\varphi-w}(\sigma)-a\sigma_\infty^{a-1}\log|\varphi(+\infty)-w|-a(1-a)\int\limits_{\sigma}^{\sigma_\infty}t^{a-2}\mathcal{J}_{\varphi-w}(t)dt.
			\end{aligned}
		\end{equation}
	\end{theorem}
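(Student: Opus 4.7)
The plan is to reduce the weighted counting function $M_{\varphi,a}$ to the unweighted one $M_{\varphi,0}$, whose existence for $\sigma > 0$ is due to Jessen--Tornehave, by means of the layer-cake identity
$$(\Re s)^a = \sigma^a + a \int_{\sigma}^{\infty} t^{a-1} \mathbf{1}_{\{t < \Re s\}} \, dt, \qquad \Re s > \sigma.$$
Inserting this into the \emph{finite} sum defining $M_{\varphi,a}(w,\sigma,T)$ and applying Fubini's theorem yields the pre-limit identity
$$M_{\varphi,a}(w,\sigma,T) = \sigma^a M_{\varphi,0}(w,\sigma,T) + a \int_{\sigma}^{\infty} t^{a-1} M_{\varphi,0}(w,t,T) \, dt.$$
To pass to $T \to \infty$, I would use that $\varphi(s) \to \varphi(+\infty) \neq w$ uniformly on vertical lines, so that $\varphi^{-1}(\{w\})$ lies in a strip $\{\sigma_0 < \Re s < \sigma_\infty\}$ and the integrand is supported on $[\sigma, \sigma_\infty]$. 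Together with a $T$-uniform bound $M_{\varphi,0}(w,t,T) \leq C$ on that compact interval---obtained from a Jensen-type zero count for the almost periodic function $\varphi - w$ on vertical rectangles---dominated convergence gives both the existence of $M_{\varphi,a}(w,\sigma)$ and formula (\ref{3}).

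Right-continuity in $\sigma$ then follows from the convexity of $\mathcal{J}_{\varphi-w}$: its right-derivative, which equals $-M_{\varphi,0}(w,\sigma)$, is automatically right-continuous, the factor $\sigma^a$ is smooth, and the integral in (\ref{3}) is continuous in its lower limit.

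For formula (\ref{eq:meanintbyparts}), I would choose $\sigma_\infty$ so large that the integrand in (\ref{3}) vanishes on $[\sigma_\infty, \infty)$, truncate the integral accordingly, and integrate by parts in the Riemann--Stieltjes sense using $M_{\varphi,0}(w,t)\, dt = -d\mathcal{J}_{\varphi-w}(t)$; this is legitimate because $\mathcal{J}_{\varphi-w}$ is convex and hence locally absolutely continuous. The calculation produces the desired right-hand side, modulo replacing the boundary contribution $a\sigma_\infty^{a-1}\mathcal{J}_{\varphi-w}(\sigma_\infty)$ by $a\sigma_\infty^{a-1}\log|\varphi(+\infty)-w|$. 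This last identification holds once $\sigma_\infty$ is enlarged so that $|\varphi(s)-\varphi(+\infty)| < |\varphi(+\infty)-w|$ on $\mathbb{C}_{\sigma_\infty}$: on that half-plane $\log(\varphi-w)$ admits a Dirichlet series branch with constant term $\log(\varphi(+\infty)-w)$, and the Bohr mean of its real part equals $\log|\varphi(+\infty)-w|$. The main technical point I expect is the $T$-uniform zero-count used for dominated convergence; this is standard almost periodic input but needs to be quantitatively stated to handle all $a \in \mathbb{R}$ (including $a<0$, where $t^{a-1}$ blows up at $t=\sigma$ but only integrably).
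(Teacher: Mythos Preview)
Your approach is correct and essentially identical to the paper's: both reduce to $M_{\varphi,0}$ via the layer-cake/Fubini identity, justify the $T$-limit by dominated convergence using a uniform zero-count bound (the paper's Lemma~\ref{4.4}, which is exactly the ``Jensen-type zero count'' you anticipate), deduce right-continuity from that of $M_{\varphi,0}$, and obtain \eqref{eq:meanintbyparts} by integration by parts via the convexity and local absolute continuity of $\mathcal{J}_{\varphi-w}$. Your closing concern about $t^{a-1}$ blowing up at $t=\sigma$ is unfounded, since $\sigma>0$ is fixed throughout the statement and $t^{a-1}$ is bounded on the compact interval $[\sigma,\sigma_\infty]$ for every $a\in\mathbb{R}$; no integrability issue arises here.
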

In Theorem~\ref{average} we will furthermore obtain the integral representation 
$$M_{\varphi,a}(w)=\int\limits_{\mathbb{T}^\infty}M_{\varphi_\chi,a}(w,0,1) \, dm_\infty(\chi)$$
 of the weighted mean counting function, where $dm_\infty$ denotes the Haar measure on the infinite polytorus $\mathbb{T}^\infty$, and $\varphi_\chi$ denotes the Dirichlet series $\varphi$ twisted by the character $\chi \in \mathbb{T}^\infty$, see Section 2. In the case that $a \geq 1$, we are from this formula able to deduce that
 $$M_{\varphi, a}(w) = \lim_{T\rightarrow \infty}\frac{\pi}{T}\sum\limits_{\substack{s\in\varphi_\chi^{-1}(\{w\})\\
 		|\Im s|<T\\
 		\Re s>0}}\left(\Re s\right)^a$$
for almost every $\chi \in \mathbb{T}^\infty$. That is, it is almost surely possible to interchange the $T$- and $\sigma$-limits in the definition of $M_{\varphi, a}(w)$. When $a = 1$, this partially resolves \cite[Problem~1]{BP21}.

	In Section~5 we then prove the analogue of the Stanton formula.
	\begin{theorem}\label{1.2}
		Suppose that $\varphi\in \mathfrak{G}_0$ and that $a \leq 1$. Then, for every $f\in \mathcal{D}_a$,
		\begin{equation}\label{4}
		\norm{C_\varphi(f)}_a^2=|f(\varphi(+\infty))|^2+\frac{2^{1-a}}{\Gamma(2-a)\pi}\int\limits_{\mathbb{C}_{\frac{1}{2}}}|f'(w)|^2 \, M_{\varphi,1-a}(w)dA(w).
		\end{equation}
		If $a \leq 0$, then $M_{\varphi,1-a}(w)$ exists and is finite for every $w\in\mathbb{C}_{\frac{1}{2}}\setminus\{\varphi(+\infty)\}$.
	\end{theorem}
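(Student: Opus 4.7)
The plan is to derive \eqref{4} by combining the change-of-variables identity displayed just before the theorem with a careful interchange of limits, and to deduce the pointwise existence of $M_{\varphi,1-a}(w)$ from Theorem~\ref{1.1}.

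I would first establish \eqref{4} for a Dirichlet polynomial $f$. Since $\varphi\in\mathfrak{G}_0$ satisfies $\Re\varphi>1/2$, each $n^{-\varphi(s)}$ is a bounded Dirichlet series on $\mathbb{C}_0$; hence $C_\varphi f=\sum a_n n^{-\varphi(s)}$ is bounded on $\mathbb{C}_0$, and Bohr's theorem gives $\sigma_u(C_\varphi f)\leq 0$, allowing \eqref{1} to be applied to $C_\varphi f$. Combining the chain rule with the (non-injective) area formula---valid off the discrete critical set $\{\varphi'=0\}$---one obtains, as noted in the excerpt,
\begin{equation*}
\norm{C_\varphi f}_a^2 = |f(\varphi(+\infty))|^2 + \frac{2^{1-a}}{\pi\Gamma(2-a)} \lim_{\sigma\to 0^+}\lim_{T\to\infty} \int_{\mathbb{C}_{\frac{1}{2}}} |f'(w)|^2 M_{\varphi,1-a}(w,\sigma,T) \, dA(w).
\end{equation*}
The task is to bring both limits inside the $w$-integral.

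Theorem~\ref{1.1} yields pointwise convergence $M_{\varphi,1-a}(w,\sigma,T)\to M_{\varphi,1-a}(w,\sigma)$ as $T\to\infty$ for every $\sigma>0$ and $w\neq\varphi(+\infty)$. To interchange this limit with the $w$-integral, I would extract a $T$-uniform upper bound on $M_{\varphi,1-a}(w,\sigma,T)$ and invoke dominated convergence, exploiting the rapid decay of $|f'|^2$ as $\Re w\to\infty$ for a Dirichlet polynomial together with Birkhoff/Besicovitch-type control of the time averages of $|f'\circ\varphi|^2|\varphi'|^2$. The outer limit is easier: because $a\leq 1$ the summands in $M_{\varphi,1-a}(w,\sigma)$ are nonnegative, so $\sigma\mapsto M_{\varphi,1-a}(w,\sigma)$ is monotone decreasing and increases to $M_{\varphi,1-a}(w)$ as $\sigma\to 0^+$, and the Monotone Convergence Theorem carries the limit inside. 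This gives \eqref{4} for Dirichlet polynomials, and a density argument based on norm approximation in $\mathcal{D}_a$ combined with Fatou's lemma extends the identity to arbitrary $f\in\mathcal{D}_a$.

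For the existence claim, fix $a\leq 0$ and set $b=1-a\geq 1$. By \eqref{3},
\begin{equation*}
M_{\varphi,b}(w,\sigma) = M_{\varphi,0}(w,\sigma)\sigma^b + b\int_\sigma^\infty t^{b-1}M_{\varphi,0}(w,t)\,dt.
\end{equation*}
Since $\varphi(s)\to\varphi(+\infty)\neq w$ uniformly as $\Re s\to\infty$, $M_{\varphi,0}(w,\cdot)$ vanishes for large $t$. Rewriting the right-hand side via the integration-by-parts form \eqref{eq:meanintbyparts} in terms of the Jessen function $\mathcal{J}_{\varphi-w}$ and boundary values at $\sigma_\infty$, and using Nevanlinna-type control on $\mathcal{J}_{\varphi-w}(\sigma)$ as $\sigma\to 0^+$ together with $b\geq 1$, every term admits a finite limit, which proves the pointwise existence and finiteness of $M_{\varphi,1-a}(w)$. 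The main obstacle is the inner $T$-limit interchange: since $M_{\varphi,1-a}(w,\sigma,T)$ is not monotone in $T$, a $T$-uniform pointwise bound on the integrand must be produced, and obtaining it cleanly is likely the most delicate step.
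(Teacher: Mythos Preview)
Your outline has the right shape, but the two places you flag as ``delicate'' are precisely where the paper supplies a specific mechanism that your proposal lacks. For the $T$-limit, a direct $T$-uniform pointwise bound on $M_{\varphi,1-a}(w,\sigma,T)$ that is also $w$-integrable against $|f'(w)|^2$ is not easy to produce. The paper sidesteps this by first restricting to a bounded $\sigma$-strip: it fixes $0<\sigma_0<\sigma_1<\infty$, notes that on $\sigma_0\le\Re s\le\sigma_1$ one has $(\Re s)^{1-a}\le C\,\Re s$, and then applies the \emph{generalized} dominated convergence theorem, using the already-established Hardy-space case
\[
\lim_{T\to\infty}\int_{\mathbb{C}_{1/2}}|f'(w)|^2\bigl(M_{\varphi,1}(w,\sigma_0,T)-M_{\varphi,1}(w,\sigma_1,T)\bigr)\,dA(w)=\int_{\mathbb{C}_{1/2}}|f'(w)|^2\bigl(M_{\varphi,1}(w,\sigma_0)-M_{\varphi,1}(w,\sigma_1)\bigr)\,dA(w)
\]
from \cite{BP21} as the dominating convergent family. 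Monotone convergence then handles $\sigma_0\to 0^+$ and $\sigma_1\to\infty$. Note also that the paper applies \eqref{1} directly to \emph{every} $f\in\mathcal{D}_a$ (since $\sigma_u(f\circ\varphi)\le 0$ by Bohr's theorem), so no polynomial-density step is needed; your Fatou extension would in any case only yield an inequality, not \eqref{4}.

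For the finiteness of $M_{\varphi,1-a}(w)$ when $a\le 0$, your Jessen-function sketch is not convincing: the term $M_{\varphi,0}(w,\sigma)\sigma^{1-a}$ requires control on the rate at which $M_{\varphi,0}(w,\sigma)$ can blow up, which you have not established. The paper argues indirectly: once \eqref{4} is known and $C_\varphi$ is bounded on $\mathcal{D}_a$ (Theorem~\ref{3.7}), the integral $\int|f'|^2 M_{\varphi,1-a}\,dA$ is finite for suitable test functions $f$, and the weak submean value property of $M_{\varphi,1-a}$ (Theorem~\ref{4.10}) then forces $M_{\varphi,1-a}(w)<\infty$ at every $w\neq\varphi(+\infty)$.
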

	\begin{remark}
		For $a \geq 1/2$, the mean counting function $M_{\varphi,1-a}(w) = \lim_{\sigma \to 0^+} M_{\varphi, 1-a}(w, \sigma)$ can be infinite everywhere, see Example \ref{4.7}. In particular, both sides of \eqref{4} can be infinite.  When $0 < a < 1/2$, we do not know if $M_{\varphi,1-a}(w)$ is finite for every $\varphi\in\mathfrak{G}_0$ and $\varphi(+\infty)\neq w$.
	\end{remark}

We use Theorem~\ref{1.2} to characterize the compact composition operators in the Bergman setting.
	\begin{theorem}\label{1.3}
		Let $\varphi\in \mathfrak{G}_0$. Then the induced composition operator $C_\varphi$ is compact on the Bergman space $\mathcal{D}_{-a}$, $a > 0$, if and only if 
		\begin{equation}\label{5}
		\lim_{\Re w\rightarrow\frac{1}{2}^+}\frac{M_{\varphi,1+a}(w)}{\left( \Re w-\frac{1}{2}\right)^{1+a}}=0.
		\end{equation}
	\end{theorem}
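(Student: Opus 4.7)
The plan is to apply the Stanton formula from Theorem~\ref{1.2} with parameter $-a$ to reduce the compactness problem to an integral estimate involving $M_{\varphi, 1+a}$. For $f \in \mathcal{D}_{-a}$,
\[
\norm{C_\varphi f}_{-a}^2 = |f(\varphi(+\infty))|^2 + \frac{2^{1+a}}{\Gamma(2+a)\pi}\int_{\mathbb{C}_{1/2}}|f'(w)|^2 M_{\varphi,1+a}(w)\, dA(w).
\]
The strategy mirrors Shapiro's classical disk argument and its $\mathcal{H}^2$-analogue from \cite{BP21}: produce a test family that certifies \eqref{5} assuming compactness, and use \eqref{5} to dominate the integral on a near-boundary strip for a weakly null sequence.

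For the necessity direction, I would use the normalized reproducing kernels $k_w = K_w/\norm{K_w}_{-a}$ of $\mathcal{D}_{-a}$, where
\[
K_w(s) = 1 + \sum_{n \geq 2} \frac{\log(n)^a}{n^{s+\bar{w}}}, \qquad \norm{K_w}_{-a}^2 = K_w(w) \asymp \left(\Re w - \tfrac{1}{2}\right)^{-(1+a)}
\]
as $\Re w \to (1/2)^+$. The kernels $k_w$ converge weakly to zero along this limit, so compactness of $C_\varphi$ forces $\norm{C_\varphi k_w}_{-a} \to 0$. Since $|k_w'(z)|^2$ concentrates on a disk around $w$ of scale $\Re w - 1/2$, a sub-mean value estimate for $M_{\varphi, 1+a}$ around $w$ provides the lower bound
\[
\norm{C_\varphi k_w}_{-a}^2 \gtrsim \frac{M_{\varphi, 1+a}(w)}{(\Re w - 1/2)^{1+a}},
\]
whence \eqref{5}.

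For the sufficiency direction, assume \eqref{5} and let $(f_n) \subset \mathcal{D}_{-a}$ satisfy $\norm{f_n}_{-a}\leq 1$ and $f_n \to 0$ weakly; I want $\norm{C_\varphi f_n}_{-a}\to 0$. The point-evaluation term $|f_n(\varphi(+\infty))|^2$ vanishes by pointwise convergence. Given $\epsilon > 0$, use \eqref{5} to choose $\delta > 0$ so that $M_{\varphi, 1+a}(w) < \epsilon(\Re w - 1/2)^{1+a}$ on $\{1/2 < \Re w < 1/2 + \delta\}$. On this strip the contribution is bounded by $\epsilon$ times the Littlewood--Paley-type quantity $\int_{\mathbb{C}_{1/2}} |f'(w)|^2 (\Re w - 1/2)^{1+a}\, dA(w)$, which is $\lesssim \norm{f}_{-a}^2$; the latter embedding can be obtained by applying Theorem~\ref{1.2} to a reference symbol $\varphi_0 \in \mathfrak{G}_0$ for which $M_{\varphi_0,1+a}(w) \gtrsim (\Re w - 1/2)^{1+a}$ in a suitable region, combined with a covering argument to reach all of $\mathbb{C}_{1/2}$. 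On the complement, $f_n'\to 0$ uniformly on compacta by Montel, and local integrability of $M_{\varphi,1+a}$ (seen, for example, by applying Theorem~\ref{1.2} to $f(s) = 2^{-s}$) together with control of the tail at infinity concludes the argument.

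The main obstacle is the sub-mean value bound used in the necessity direction: in contrast to the disk, the mean counting function $M_{\varphi,1+a}$ is not evidently subharmonic in $w$. The key to overcoming this is the Jessen representation in Theorem~\ref{1.1}, which expresses $M_{\varphi,1+a}(w)$ in terms of the Jessen function $\mathcal{J}_{\varphi-w}$, itself a subharmonic-type average of $\log|\varphi-w|$; this reduces the needed sub-mean value property for $M_{\varphi,1+a}$ to a sub-mean value statement for a subharmonic function. A secondary technical point is the Littlewood--Paley embedding on $\mathbb{C}_{1/2}$, but this can be arranged by carefully selecting reference symbols in $\mathfrak{G}_0$ built from simple exponential Dirichlet polynomials, together with an imaginary-translation argument exploiting the translation invariance of Dirichlet series in the $\Im s$-direction.
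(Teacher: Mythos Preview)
Your necessity argument is essentially the paper's: normalized kernels, the Stanton formula, and a sub-mean value inequality for $M_{\varphi,1+a}$. The paper does not derive the latter from the Jessen representation as you suggest; it instead compares $M_{\varphi,1+a}(\cdot,\sigma,T)$ with a generalized Nevanlinna counting function $N_{\varphi\circ\Theta_{\sigma,2T},1+a}$ on a half-strip (via the Koebe quarter theorem) and imports the sub-mean property from the disk. Your Jessen route is not obviously wrong, but it is not fleshed out, and for exponents $>1$ the sign pattern in \eqref{eq:meanintbyparts} makes a direct subharmonicity reduction less transparent than you suggest.

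The genuine gap is in the sufficiency direction. The ``Littlewood--Paley-type quantity'' you want to invoke,
\[
\int_{\mathbb{C}_{1/2}} |f'(w)|^2\,(\Re w - \tfrac12)^{1+a}\,dA(w),
\]
is \emph{infinite} for every non-constant Dirichlet series: the inner integral in $\Im w$ diverges (already for $f(s)=2^{-s}$). The norm formula \eqref{1} is a $T$-average, not an area integral, so there is no embedding of the kind you claim. Your proposed fix via reference symbols and imaginary translations cannot work either: for any $\varphi_0\in\mathfrak{G}_0$ the counting function $M_{\varphi_0,1+a}(w)$ decays as $|\Im w|\to\infty$ (this is exactly Proposition~\ref{5.3}), so no single $\varphi_0$ gives a uniform lower bound $M_{\varphi_0,1+a}(w)\gtrsim(\Re w-\tfrac12)^{1+a}$, and superposing infinitely many translates loses the uniform norm bound.

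What the paper does instead is to combine \eqref{5} with the a priori estimate of Proposition~\ref{5.3}, obtained from the Schwarz-type Lemma~\ref{3.4}. This yields, for every $\epsilon>0$, a $\theta\in(\tfrac12,\Re\varphi(+\infty))$ such that
\[
M_{\varphi,1+a}(w)\le \epsilon\,\frac{(\Re w-\tfrac12)^{1+a}}{|w-\varphi(+\infty)|^{1+\delta}},\qquad \tfrac12<\Re w<\theta.
\]
The extra factor $|w-\varphi(+\infty)|^{-(1+\delta)}$ supplies the missing decay in $\Im w$, and then Lemma~\ref{5.4} (a direct Fubini computation) shows that the resulting integral against $|f'|^2$ is controlled by $\|f\|_{-a}^2$. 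The region $\{\Re w\ge\theta\}$ is handled by Lemma~\ref{5.5}, essentially as you describe. In short, the missing idea in your sufficiency argument is the pointwise Littlewood-type bound on $M_{\varphi,1+a}$ coming from the Schwarz lemma for Dirichlet series; without that decay in the vertical direction, the strip integral cannot be closed.
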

	In addition to the change of variable formula, our Schwarz-type lemma, Lemma~\ref{3.4}, is essential to proving the sufficiency of \eqref{5}. In this context, we note that Bayart \cite{BAY21} recently showed that the condition $\lim\limits_{\Re s\rightarrow0^+}\frac{\Re\varphi(s)-\frac{1}{2}}{\Re s}=\infty$ is sufficient, but not necessary, for the operator $C_\varphi \colon \mathcal{D}_{-a} \to \mathcal{D}_{-a}$ to be compact.
	
	Finally, we consider the Dirichlet-type spaces $\mathcal{D}_a$ for $0 < a < 1$.  We prove that the analogue of \eqref{5} remains necessary for the composition operator to be compact, and we give an analogous necessary condition for boundedness. In Example \ref{5.6} we observe that this condition is not sufficient for the operator to be bounded, at least not when $a \geq 1/2$. 
	
	\begin{theorem}\label{1.4}
		Suppose that $0 < a < 1$ and let $\varphi\in\mathfrak{G}_0$. If the operator $C_\varphi$ is bounded on the Dirichlet space $\mathcal{D}_{a}$, then for every $\delta>0$ there exists a constant $C(\delta)>0$ such that 
		\begin{equation}\label{6}
		\frac{M_{\varphi,1-a}(w)}{\left(\Re w-\frac{1}{2}\right)^{1-a}}< C(\delta), \qquad w\in \mathbb{C}_{\frac{1}{2}}\setminus D(\varphi(+\infty),\delta).
		\end{equation}
		If $C_\varphi:\mathcal{D}_{a}\rightarrow\mathcal{D}_{a}$ is compact, then
\begin{equation}\label{7}
\lim_{\Re w\rightarrow\frac{1}{2}^+}\frac{M_{\varphi,1-a}(w)}{\left(\Re w-\frac{1}{2}\right)^{1-a}}=0.
\end{equation}
	\end{theorem}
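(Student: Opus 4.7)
I would derive both necessary conditions in Theorem~\ref{1.4} by testing the Stanton formula of Theorem~\ref{1.2} against the normalized reproducing kernels of $\mathcal{D}_a$. For $w_0 \in \mathbb{C}_{1/2}$, the reproducing kernel of $\mathcal{D}_a$ at $w_0$ is
\[
K_{w_0}(s)=1+\sum_{n\geq 2}(\log n)^{-a}n^{-s-\bar w_0},
\]
with derivative $K_{w_0}'(w)=-\sum_{n\geq 2}(\log n)^{1-a}n^{-w-\bar w_0}$. Standard Tauberian estimates show that as $u\to 1^+$ these two Dirichlet series have singular behavior of order $(u-1)^{a-1}$ and $(u-1)^{a-2}$ respectively, which gives as $\Re w_0\to 1/2^+$,
\[
\|K_{w_0}\|_a^2=K_{w_0}(w_0)\asymp (\Re w_0-1/2)^{a-1}, \qquad |K_{w_0}'(w)|^2\gtrsim (\Re w_0-1/2)^{2a-4}
\]
uniformly for $w\in D(w_0,c(\Re w_0-1/2))$, for some absolute constant $c>0$.

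Substituting $\tilde K_{w_0}=K_{w_0}/\|K_{w_0}\|_a$ into Theorem~\ref{1.2} and restricting the resulting integral to $D(w_0,c(\Re w_0-1/2))$ gives
\[
\|C_\varphi\tilde K_{w_0}\|_a^2-|\tilde K_{w_0}(\varphi(+\infty))|^2\gtrsim (\Re w_0-1/2)^{a-3}\int_{D(w_0,c(\Re w_0-1/2))}M_{\varphi,1-a}(w)\,dA(w).
\]
The crux of the argument is then a submean-type lower bound
\[
\int_{D(w_0,r)}M_{\varphi,1-a}(w)\,dA(w)\gtrsim r^2\, M_{\varphi,1-a}(w_0), \qquad r\lesssim \Re w_0-1/2,
\]
valid for $w_0$ bounded away from $\varphi(+\infty)$. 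I would obtain this by exploiting the subharmonicity of the Jessen function $\mathcal{J}_{\varphi-w}$ in $w$ (as a mean of the subharmonic function $w\mapsto\log|\varphi(s)-w|$) in combination with the integral representations of $M_{\varphi,1-a}$ from Theorem~\ref{1.1} and Theorem~\ref{average}. When $M_{\varphi,1-a}(w_0)$ is potentially infinite (which may occur for $a\geq 1/2$), I would argue first with the truncations $M_{\varphi,1-a}(\cdot,\sigma)$, $\sigma>0$, and pass to the limit $\sigma\to 0^+$. Combining these estimates yields
\[
\|C_\varphi\tilde K_{w_0}\|_a^2-|\tilde K_{w_0}(\varphi(+\infty))|^2\gtrsim \frac{M_{\varphi,1-a}(w_0)}{(\Re w_0-1/2)^{1-a}}.
\]

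The two conclusions then follow immediately. By Cauchy--Schwarz, $|\tilde K_{w_0}(\varphi(+\infty))|^2\leq \|K_{\varphi(+\infty)}\|_a^2$ uniformly in $w_0$, so if $C_\varphi$ is bounded on $\mathcal{D}_a$ then $\|C_\varphi\tilde K_{w_0}\|_a^2\leq \|C_\varphi\|^2$, yielding \eqref{6}; the neighborhood exclusion around $\varphi(+\infty)$ absorbs the possible failure of the submean inequality there. For compactness, since $\|\tilde K_{w_0}\|_a=1$ while $\tilde K_{w_0}(s)\to 0$ pointwise on $\mathbb{C}_{1/2}$ as $\Re w_0\to 1/2^+$, the normalized kernels converge weakly to zero in $\mathcal{D}_a$, so $\|C_\varphi\tilde K_{w_0}\|_a\to 0$ and \eqref{7} follows. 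The main obstacle is establishing the submean inequality for $M_{\varphi,1-a}$; the reproducing kernel analysis is routine once the Tauberian estimates are in place.
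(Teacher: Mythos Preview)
Your overall strategy is the same as the paper's: the paper explicitly says that the proof of Theorem~\ref{1.4} ``is completely analogous to the proof of necessity in Theorem~\ref{1.3}'' and leaves the details to the reader. That proof tests the Stanton formula (Theorem~\ref{1.2}) on normalized reproducing kernels, uses Lemma~\ref{5.1} to bound $|K_{w_0}'|^2$ from below on a disk $D(w_0,r)$ with $r\asymp \Re w_0-1/2$, and then invokes the weak submean value property to pass from the disk average of $M_{\varphi,1-a}$ back to its value at $w_0$. Your Tauberian estimates and the weak-convergence argument for compactness are exactly what the paper does.

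The one place where you diverge is in how you propose to obtain the submean inequality. The paper already has this as Theorem~\ref{4.10} (valid for all exponents $>0$, so in particular for $1-a\in(0,1)$), and its proof goes through a comparison with the generalized Nevanlinna counting function $N_{\varphi\circ\Theta_{\sigma,2T},\,1-a}$ on half-strips, together with Lemma~\ref{4.11}. Your suggested route via the subharmonicity of $w\mapsto\mathcal{J}_{\varphi-w}(\sigma)$ and the representations in Theorem~\ref{1.1} is not obviously workable: the formulas in Theorem~\ref{1.1} all contain the term $M_{\varphi,0}(w,\sigma)\sigma^{1-a}=-\mathcal{J}'_{\varphi-w}(\sigma^+)\sigma^{1-a}$, and the right derivative in $\sigma$ of a function subharmonic in $w$ need not be subharmonic in $w$. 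So either you would have to show how to absorb or control that term, or simply cite Theorem~\ref{4.10} and be done. Once Theorem~\ref{4.10} is in hand, your argument and the paper's are identical.
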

	In the special case where the symbol $\varphi$ has bounded imaginary parts and the associated counting function is locally integrable, we can also prove that \eqref{6} is sufficient for the composition operator $C_\varphi$ to be bounded, and that \eqref{7} is sufficient for a bounded composition operator $C_\varphi$ to be compact.
	
	\subsection*{Notation}
	Throughout the article, we will employ the convention that $C$ denotes a positive constant which may vary from line to line. When we wish to clarify that the constant depends on some parameter $P$, we will write that $C = C(P)$. Furthermore, if $A = A(P)$ and $B = B(P)$ are two quantities depending on $P$, we write $A \approx B$ to signify that there are constants $c_1, c_2 > 0$ such that $c_1 B \leq A \leq c_2B$ for all relevant choices of $P$.
	
	\subsection*{Acknowledgments}  
	We thank Ole Fredrik Brevig for providing helpful comments.
	\section{Background material}
	\subsection{The infinite polytorus and vertical limits}
	The infinite polytorus is defined as the (countable) infinite Cartesian product of copies of the unit  circle $\mathbb{T}$,
	$$\mathbb{T}^\infty=\left\{\chi=(\chi_1,\chi_2,\dots): \,\chi_j\in\mathbb{T},\, j\geq 1\right\}.$$
	It is a compact abelian group with respect to coordinate-wise multiplication. We can identify the Haar measure $m_\infty$ of the infinite polytorus with the countable infinite product measure $m\times m\times\cdots$, where $m$ is the normalized Lebesgue measure of the unit circle.
	
	By the prime number theorem, $\mathbb{T}^\infty$ is isomorphic to the group of characters of $(\mathbb{Q}_+,\cdot)$. Given a point $\chi=(\chi_1,\chi_2,\dots)\in\mathbb{T}^\infty$, the coresponding character $\chi:\mathbb{Q}_+\rightarrow\mathbb{T}$ is the completely multiplicative function on $\mathbb{N}$ such that $\chi(p_j)=\chi_j$, where $\{p_j\}_{j\geq1}$ is the increasing sequence of primes, extended to $\mathbb{Q}_+$ through the relation $\chi(n^{-1})=\overline{\chi(n)}$.
	
	Suppose $f(s)=\sum\limits_{n\geq1}\frac{a_n}{n^s}$ is a Dirichlet series and $\chi(n)$ is a character. The vertical limit function $f_\chi$ is defined as
	$$f_\chi(s)=\sum\limits_{n\geq1}\frac{a_n\chi(n)}{n^s}.$$
	The name comes from Kronecker's theorem \cite{BOH34}; for any $\epsilon>0$, there exists a sequence of real numbers $\{t_j\}_{j\geq1}$ such that $f(s+t_j)\rightarrow f_\chi(s)$ uniformly on $\mathbb{C}_{\sigma_u(f)+\epsilon}$.
	
	If $f\in \mathcal{D}_a$, then the abscissa of convergence satisfies $\sigma_c(f_\chi)\leq 0$ for almost every $\chi\in \mathbb{T}^\infty$. This is a consequence of the Rademacher--Menchov theorem \cite[Ch. XIII]{ZYG02}, following an argument of \cite{BAY02}. Finally, we note that if $\psi(s)=c_0s+\varphi(s)\in\mathfrak{G}$, and we set
	$$\psi_\chi(s)=c_0s+\varphi_\chi(s),$$
	then for every $\chi\in\mathbb{T}^\infty$ we have that
	\begin{equation} \label{eq:comprule}
	\left(C_\psi(f)\right)_\chi=f_{\chi^{c_0}}\circ\psi_\chi.
	\end{equation}
	\subsection{The hyperbolic metric and distance}
	The classical Schwarz--Pick lemma states that for every holomorphic self-map of the unit disk $\phi:\mathbb{D}\rightarrow\mathbb{D}$ and for any $z\in \mathbb{D}$,
	\begin{equation}\label{90}
	\frac{|\phi'(z)|}{1-|\phi(z)|^2}\leq \frac{1}{1-|z|^2}.
	\end{equation}
	Equality holds in \eqref{90} for one point $z_0\in\mathbb{D}$, and consequently for all points, if and only if $\phi$ is a holomorphic automorphism of the unit disk. The hyperbolic metric and distance in the unit disk are defined respectively as
	$$\lambda_{\mathbb{D}}(z)=\frac{2}{1-|z|^2}$$
	and
	$$d_{\mathbb{D}}(z,w)=\inf\limits_\gamma\int\limits_\gamma\lambda_{\mathbb{D}}(\zeta) \,|d\zeta|,$$
	where the infimum is taken over all piecewise smooth curves $\gamma$ in $\mathbb{D}$ that join $z$ and $w$. The 
	Schwarz--Pick lemma implies that every holomorphic self-map of the unit disk is a contraction of the hyperbolic distance,
	\begin{equation}\label{9}
	\lambda_{\mathbb{D}}(\phi(z))|\phi'(z)|\leq \lambda_{\mathbb{D}}(z),
	\end{equation}
	and
	\begin{equation}\label{10}
	d_\mathbb{D}(\phi(z),\phi(w)))\leq d_\mathbb{D}(z,w),
	\end{equation}
	where $z,\, w\in \mathbb{D}.$
	
	If equality holds in \eqref{9} for one point, or in \eqref{10} for a pair of distinct points, then $\phi$ is a holomorphic automorphism of the unit disk, and thus an isometry. Using the conformal invariance of the hyperbolic distance, one can prove that
	$$d_\mathbb{D}(z,w)=\log\frac{1+\left|\frac{w-z}{1-\overline{w}z}\right|}{1-\left|\frac{w-z}{1-\overline{w}z}\right|}=2\arctanh\left|\frac{w-z}{1-\overline{w}z}\right|.$$
	
	The Riemann mapping theorem allows us to transfer these notions to any simply connected proper subdomain $\Omega$ of the complex plane. More precisely, let $f$ be a Riemann map from $\Omega$ onto the unit disk. Then
	$$\lambda_{\Omega}(z)=\lambda_{\mathbb{D}}(f(z))|f'(z)|,$$
	and
	$$d_{\Omega}(z,w)=d_{\mathbb{D}}(f(z), f(w))=\inf\limits_\gamma\int\limits_\gamma\lambda_{\Omega}(\zeta) \, |d\zeta|,$$
	where the infimum is taken over all piecewise smooth curves $\gamma$ in $\Omega$ that join $z$ and $w$. By the Schwarz lemma it is easy to prove that $\lambda_{\Omega}$ and $d_{\Omega}$ are independent of the choice of the Riemann map. In the case of the right-half plane, considering the Riemann map $f(z)=\frac{z-w}{z+\overline{w}}$ we obtain that
	$$\lambda_{\mathbb{C}_0}(z)=\frac{1}{\Re z},$$
	and
	$$d_{\mathbb{C}_0}(z,w)=\log\frac{1+\left|\frac{z-w}{z+\overline{w}}\right|}{1-\left|\frac{z-w}{z+\overline{w}}\right|}=\log\frac{\left(|z+\overline{w}|+|z-w|\right)^2}{4\Re z\Re w},$$
	where $z,\,w\in \mathbb{C}_0$.
	
	The following Schwarz--Pick lemma for simply connected domains is a direct consequence of the definition and the ordinary Schwarz--Pick lemma. 
	\begin{theorem}[\cite{BM07}]\label{2.1}
		Suppose that $\Omega_1$ and $\Omega_2$ are simply connected proper subdomains of the complex plane and that $f:\Omega_1\rightarrow\Omega_2$ is a holomorphic function. Then, for every $z,$ $w\in\Omega_1$,
		\begin{equation}\label{11}
		\lambda_{\Omega_2}(f(z))|f'(z)|\leq \lambda_{\Omega_1}(z),
		\end{equation}
		and
		\begin{equation}\label{12}
		d_{\Omega_2}(f(z), f(w))\leq d_{\Omega_1}(z,w).
		\end{equation}
		Furthermore, equality holds  in \eqref{11} for one point, or in \eqref{12} for a pair of distinct points, if and only if $f$ is a biconformal map from $\Omega_1$ onto $\Omega_2$.
	\end{theorem}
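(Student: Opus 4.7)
The plan is to transport everything to the unit disk via Riemann maps and reduce to the classical Schwarz--Pick lemma. Fix Riemann maps $\phi_j \colon \Omega_j \to \mathbb{D}$, $j = 1, 2$, and form the holomorphic self-map of the disk $g = \phi_2 \circ f \circ \phi_1^{-1}$. Both defining identities $\lambda_{\Omega_j}(\cdot) = \lambda_{\mathbb{D}}(\phi_j(\cdot))|\phi_j'(\cdot)|$ and $d_{\Omega_j}(p,q) = d_{\mathbb{D}}(\phi_j(p), \phi_j(q))$ are set up precisely so that the estimates for $f$ should be equivalent to the classical Schwarz--Pick estimates for $g$.

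For \eqref{11}, I would set $\zeta = \phi_1(z)$ and expand $g'(\zeta)$ by the chain rule. The factors $|\phi_1'(z)|$ and $|(\phi_1^{-1})'(\zeta)|$ are reciprocals and cancel, and after using the definition of $\lambda_{\Omega_j}$ on both sides, the target inequality $\lambda_{\Omega_2}(f(z))|f'(z)| \leq \lambda_{\Omega_1}(z)$ becomes exactly $\lambda_{\mathbb{D}}(g(\zeta))|g'(\zeta)| \leq \lambda_{\mathbb{D}}(\zeta)$, which is \eqref{9}. For \eqref{12}, the distance identity reduces the claim directly to $d_{\mathbb{D}}(g(\phi_1(z)), g(\phi_1(w))) \leq d_{\mathbb{D}}(\phi_1(z), \phi_1(w))$, again the classical contraction applied to $g$.

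For the equality statements, I would invoke the rigidity part of the classical Schwarz--Pick lemma: equality in \eqref{11} at one point, or in \eqref{12} at a pair of distinct points, forces the corresponding equality for $g$, whence $g$ is a biholomorphic automorphism of $\mathbb{D}$. Writing $f = \phi_2^{-1} \circ g \circ \phi_1$ then exhibits $f$ as a composition of biconformal maps and thus as a biconformal map $\Omega_1 \to \Omega_2$. The converse is immediate from the same identities, once one notes that $\lambda_{\Omega_j}$ and $d_{\Omega_j}$ are intrinsic (independent of the chosen Riemann map), which itself follows from the Schwarz--Pick equality case for disk automorphisms. The entire argument is bookkeeping layered on top of the classical lemma, so I do not expect any genuine obstacle; the only item requiring a moment of care is checking that the chain-rule factors in the infinitesimal version really do cancel cleanly on both sides.
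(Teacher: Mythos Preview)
Your proposal is correct and matches the paper's approach exactly: the paper does not give a detailed proof of Theorem~\ref{2.1} but simply remarks that it ``is a direct consequence of the definition and the ordinary Schwarz--Pick lemma,'' which is precisely the reduction via Riemann maps that you carry out.
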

	\section{Bounded composition operators on Bergman spaces of Dirichlet series}
	Consider the maps $T_\beta(z)=\beta\frac{1-z}{1+z},$ $\beta>0$, and $S_\theta(z)=z+\theta$, $\theta>0$, taking the unit disk $\mathbb{D}$ onto $\mathbb{C}_0$ and the half-plane $\mathbb{C}_0$ onto $\mathbb{C}_\theta$, respectively. Following \cite{GH99}, the space $H_i^2(\mathbb{C}_\theta,\beta)$ consists of those holomorphic functions on $\mathbb{C}_\theta$ such that $f\circ S_\theta\circ T_\beta\in H^2(\mathbb{\mathbb{D}})$, with norm
	\begin{equation*}
	\norm{f}^2_{H_i^2(\mathbb{C}_\theta,\beta)}:=\norm{f\circ S_\theta\circ T_\beta}_{H^2(\mathbb{D})}^2=\frac{\beta}{\pi}\int\limits_{-\infty}^{+\infty}|f(\theta+it)|^2\frac{dt}{\beta^2+t^2}.
	\end{equation*}
	We recall the following two lemmas.
	\begin{lemma}[\cite{GH99,QEF15}]\label{3.1}
		Let $f\in \mathcal{H}^2$ be such that $\sigma_u(f)\leq 0$. Then
		\begin{equation*}
		\lim_{\beta\rightarrow \infty}\norm{f}_{H_i^2(\mathbb{C}_0,\beta)}=\norm{f}_0.
		\end{equation*}
	\end{lemma}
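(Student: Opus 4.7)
The plan is to prove the identity first for Dirichlet polynomials via a direct Fourier-analytic computation, and then pass to the limit for general $f \in \mathcal{H}^2$ with $\sigma_u(f) \leq 0$ by applying the polynomial formula to partial sums.

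For a Dirichlet polynomial $p(s) = \sum_{n=1}^N a_n n^{-s}$, I would expand the finite sum $|p(it)|^2 = \sum_{m,n=1}^N a_m\overline{a_n}\,(n/m)^{it}$ and integrate term by term against the Cauchy measure. The standard identity
$$\frac{\beta}{\pi}\int_{-\infty}^{\infty} e^{i\alpha t}\,\frac{dt}{\beta^2+t^2} = e^{-\beta|\alpha|}$$
for the Fourier transform of the normalized Poisson kernel on the line then yields the closed formula
$$\norm{p}^2_{H_i^2(\mathbb{C}_0,\beta)} = \sum_{m,n=1}^N a_m\overline{a_n}\left(\frac{\min(m,n)}{\max(m,n)}\right)^\beta.$$
The diagonal contribution equals $\norm{p}_0^2$ independently of $\beta$, while each of the finitely many off-diagonal terms decays exponentially in $\beta$, so the lemma holds for polynomials.

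For general $f$, let $f_N$ denote the $N$th partial sum of the Dirichlet series of $f$. Then $\norm{f_N}_0 \to \norm{f}_0$ trivially, and the polynomial case gives $\lim_{\beta\to\infty}\norm{f_N}_{H_i^2(\mathbb{C}_0,\beta)} = \norm{f_N}_0$. Applying the same closed formula to the truncated tails $\sum_{N<n\leq M} a_n n^{-s}$ and passing $M \to \infty$, one obtains $\limsup_{\beta \to \infty}\norm{f - f_N}^2_{H_i^2(\mathbb{C}_0,\beta)} \leq \sum_{n > N} |a_n|^2$, which combined with the triangle inequality $|\norm{f}_{H_i^2(\mathbb{C}_0,\beta)} - \norm{f_N}_{H_i^2(\mathbb{C}_0,\beta)}| \leq \norm{f - f_N}_{H_i^2(\mathbb{C}_0,\beta)}$ closes the argument after sending first $\beta \to \infty$ and then $N \to \infty$.

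The main obstacle will be the passage $M \to \infty$ inside the $H_i^2(\mathbb{C}_0,\beta)$ norm of the tail, since $\mathcal{H}^2$ does not embed continuously into $H_i^2(\mathbb{C}_0,\beta)$ with a norm that is uniform in $\beta$, so a naive approximation bound is not available. I would handle this by pulling back through $T_\beta$ to the unit disk, exploiting the uniform convergence of Dirichlet partial sums on every $\mathbb{C}_\epsilon$ guaranteed by $\sigma_u(f)\leq 0$, and invoking Fatou's lemma in $H^2(\mathbb{D})$ so that the tail's norm is expressed as a limit of finite sums to which the polynomial formula applies directly.
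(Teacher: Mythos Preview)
The paper does not prove this lemma; it is simply recalled from \cite{GH99,QEF15}, so there is no proof of the paper's to compare against.

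Your polynomial computation is correct. The passage to general $f$, however, has a genuine gap---and in fact the lemma as literally stated is false. Take $f(s)=\zeta(1+s)=\sum_{n\geq 1}n^{-1}n^{-s}$: then $f\in\mathcal{H}^2$ with $\sigma_u(f)=0$, but since $\zeta$ has a simple pole at $1$ one has $f\circ T_\beta(z)\sim\frac{2}{\beta(1-z)}$ as $z\to 1$ in $\mathbb{D}$, so $f\circ T_\beta\notin H^2(\mathbb{D})$ and $\norm{f}_{H_i^2(\mathbb{C}_0,\beta)}=\infty$ for every $\beta>0$. Your Fatou bound $\norm{f-f_N}_\beta^2\leq\liminf_M\norm{f_M-f_N}_\beta^2$ is valid, but the subsequent claim that $\limsup_\beta$ of the right-hand side is at most $\sum_{n>N}|a_n|^2$ would require the off-diagonal contribution of your closed formula to vanish uniformly in $M$ as $\beta\to\infty$. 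It does not: the kernel $(\min(m,n)/\max(m,n))^\beta$ has row sums of order $m/\beta$ and does not define a bounded quadratic form on $\ell^2$, which is exactly what the $\zeta$ example exploits. The swap $\limsup_\beta\liminf_M$ is therefore not justified.

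The result is correct, and your scheme works cleanly, under the additional hypothesis that $f$ is bounded on $\mathbb{C}_0$---which is all the paper ever uses, since in the proof of Theorem~\ref{3.7} the lemma is applied only to $C_{\psi_\sigma}(f)$ with $f$ a Dirichlet polynomial. In that setting one can dispense with Fatou entirely: the Poisson measure $\frac{\beta}{\pi}\frac{dt}{\beta^2+t^2}$ has total mass $1$, so $\norm{g}_{H_i^2(\mathbb{C}_0,\beta)}\leq\sup_{\mathbb{C}_0}|g|$ uniformly in $\beta$, and uniform convergence of the partial sums on the closed half-plane (available directly if $\sigma_u(f)<0$, or after an intermediate horizontal shift $f(\cdot+\epsilon)$ otherwise) closes your triangle-inequality argument immediately.
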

	\begin{lemma}[\cite{BR17}]\label{3.2}
		For $\beta>0$ and $f\in \mathcal{H}^2$, 
		\begin{equation*}
		\norm{f}^2_{H_i^2(\mathbb{C}_\frac{1}{2},\beta)}\leq \max\left\{\frac{2}{\beta},\zeta(1+\beta)\right\}\norm{f}_0^2.
		\end{equation*}
	\end{lemma}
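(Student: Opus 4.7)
The plan is to convert $\norm{f}^2_{H_i^2(\mathbb{C}_{1/2},\beta)}$ into a bilinear form on Dirichlet coefficients using the Fourier transform of the Poisson kernel, and then to bound the resulting matrix operator via Schur's test. The weight $\omega_n = n^{-1/2}$ is chosen so that the two quantities $2/\beta$ and $\zeta(1+\beta)$ arise as the endpoint values (limit $m\to\infty$ and value at $m=1$) of the resulting Schur sum.

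For a Dirichlet polynomial $f(s)=\sum_n a_n n^{-s}$, I would expand $|f(1/2+it)|^2 = \sum_{m,n} a_m \overline{a_n} (mn)^{-1/2} (n/m)^{it}$ and integrate termwise, using the identity $\frac{\beta}{\pi}\int_\mathbb{R} \frac{e^{-ixt}}{\beta^2+t^2}\,dt = e^{-\beta|x|}$, to obtain
\[
\norm{f}^2_{H_i^2(\mathbb{C}_{1/2},\beta)} = \sum_{m,n \geq 1} a_m \overline{a_n} K(m,n), \qquad K(m,n) = \frac{1}{\sqrt{mn}} \left(\frac{\min(m,n)}{\max(m,n)}\right)^{\beta},
\]
which extends to all $f\in\mathcal{H}^2$ by density. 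Applying Schur's test with $\omega_n=n^{-1/2}$ to the symmetric positive kernel $K$, a short computation yields
\[
\sum_{n\geq 1} K(m,n)\omega_n = \omega_m \cdot G(m), \qquad G(m) = m^{-\beta}\sum_{n=1}^m n^{\beta-1} + m^\beta \sum_{n>m} n^{-\beta-1},
\]
so the proof reduces to establishing $G(m) \leq \max\{2/\beta,\,\zeta(1+\beta)\}$ for every integer $m \geq 1$.

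Two endpoint computations identify the terms in this maximum. Direct evaluation gives $G(1)=\zeta(1+\beta)$, and rewriting $G(m)=\frac{1}{m}\sum_{n\geq 1}h(n/m)$ with $h(r)=r^{\beta-1}$ for $r\leq 1$ and $h(r)=r^{-\beta-1}$ for $r>1$ identifies $G(m)$ as a right Riemann sum for $\int_0^\infty h(r)\,dr=2/\beta$. When $\beta\leq 1$, $h$ is non-increasing on $(0,\infty)$, so the right Riemann sum under-estimates the integral, giving $G(m)\leq 2/\beta$; this matches the claim since $\beta\zeta(1+\beta)\leq 2$ in this range. When $\beta>1$, $h$ peaks at $r=1$ and the two Riemann-sum discrepancies on either side of $r=1$ have opposite signs of order $1/m$; their leading contributions cancel, leaving $G(m)-2/\beta = O(1/m^2) + O(m^{-\beta})$ via Euler--Maclaurin, which combined with the exact value at $m=1$ yields the required uniform bound.

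The main obstacle is this final estimate when $\beta>1$: the function $G$ is not monotonic in $m$ for all $\beta$, so one cannot simply interpolate between $G(1)$ and $\lim_{m\to\infty}G(m)$. A careful Euler--Maclaurin analysis of the two partial sums defining $G(m)$ produces an explicit remainder whose sign is governed by that of $\zeta(1+\beta)-2/\beta$, and one then checks case by case that $G(m)\leq\max\{\zeta(1+\beta),2/\beta\}$. Alternatively, one may bypass monotonicity entirely by analyzing the finite difference
\[
G(m+1)-G(m) = \bigl((m+1)^{-\beta}-m^{-\beta}\bigr)\sum_{n=1}^m n^{\beta-1} + \bigl((m+1)^\beta-m^\beta\bigr)\sum_{n>m}n^{-\beta-1},
\]
whose sign is controlled by comparing the ratio $\sum_{n>m}n^{-\beta-1}/\sum_{n\leq m}n^{\beta-1}$ to $(m^{-\beta}-(m+1)^{-\beta})/((m+1)^\beta-m^\beta)$ via sharp integral bounds; this confines the global excursions of $G$ to the interval between $\zeta(1+\beta)$ and $2/\beta$.
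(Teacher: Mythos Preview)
The paper does not prove this lemma; it is quoted from \cite{BR17} without argument, so there is no in-paper proof to compare against. Your overall strategy---computing the Poisson integral to obtain the bilinear form with kernel $K(m,n)=(mn)^{-1/2}\bigl(\min(m,n)/\max(m,n)\bigr)^\beta$ and then bounding it by Schur's test with weights $\omega_n=n^{-1/2}$---is correct and is indeed the standard route. The reduction to the scalar inequality $G(m)\le\max\{2/\beta,\zeta(1+\beta)\}$ is valid, your finite-difference formula for $G(m+1)-G(m)$ is correct, and your treatment of the range $\beta\le 1$ via the monotonicity of $h$ is complete.

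The gap is in the range $\beta>1$. There you assert that Euler--Maclaurin ``produces an explicit remainder whose sign is governed by that of $\zeta(1+\beta)-2/\beta$'' and that one then ``checks case by case,'' or alternatively that ``sharp integral bounds'' on the ratio in the finite difference ``confine the global excursions of $G$ to the interval between $\zeta(1+\beta)$ and $2/\beta$''; but neither of these claims is actually carried out. An asymptotic of the form $G(m)-2/\beta=O(m^{-2})+O(m^{-\beta})$ says nothing about the sign of $G(m)-\max\{2/\beta,\zeta(1+\beta)\}$ for intermediate $m$, and since you yourself note that $G$ need not be monotone, the passage from the two endpoint values $G(1)=\zeta(1+\beta)$ and $\lim_{m\to\infty}G(m)=2/\beta$ to the uniform bound is precisely the content that is missing. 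As written, the $\beta>1$ case is an outline rather than a proof. For the application in Theorem~\ref{3.7} only a locally bounded constant $C(\beta)$ is actually required, and the crude integral comparison $G(m)\le 2/\beta+1/m$ (valid for $\beta>1$ by bounding each sum against an integral) already delivers that; but it does not yield the sharp constant stated in the lemma.
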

	The Cauchy--Schwarz inequality shows that point evaluations in $\mathbb{C}_{\frac{1}{2}}$ are bounded on $\mathcal{D}_a$. We record the following statement for easy reference.
	\begin{lemma}\label{3.3}
		Let $a\leq1$ and $\delta>0$. Then there exists a constant $C=C(a,\delta)$ such that for every $s\in\mathbb{C}_{\frac{1}{2}+\delta}$ and $f\in\mathcal{D}_a$,
		\begin{equation*}
		|f'(s)| \leq C\norm{f}_a|2^{-s}|.
		\end{equation*}
	\end{lemma}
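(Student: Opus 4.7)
The plan is to prove the pointwise estimate by a direct Cauchy--Schwarz argument applied to the Dirichlet series of $f'$, then extract the factor $|2^{-s}|$ by a tail-bound on a scalar series. Since the $a_1$ term is killed by differentiation, only terms $n \geq 2$ contribute, which is exactly where the weight $\log(n)^a$ lives.

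First I would write
\[
f'(s) = -\sum_{n\geq 2} a_n \log(n)\, n^{-s}
\]
and split the factor $\log(n)\, n^{-\Re s}$ as $\log(n)^{a/2}\cdot \log(n)^{1-a/2} n^{-\Re s}$. Cauchy--Schwarz then gives
\[
|f'(s)|^2 \leq \left(\sum_{n\geq 2} |a_n|^2 \log(n)^a\right)\left(\sum_{n\geq 2} \log(n)^{2-a}\, n^{-2\Re s}\right) \leq \|f\|_a^2 \cdot S(\Re s),
\]
where $S(\sigma) = \sum_{n\geq 2}\log(n)^{2-a} n^{-2\sigma}$.

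The remaining step is to show $S(\sigma) \leq C(a,\delta)\, 4^{-\sigma}$ uniformly for $\sigma > \tfrac{1}{2}+\delta$, which then yields the claim after taking square roots. For this I would write
\[
4^{\sigma} S(\sigma) = \sum_{n\geq 2} \log(n)^{2-a} \left(\tfrac{n}{2}\right)^{-2\sigma}.
\]
The $n=2$ term equals $\log(2)^{2-a}$, a constant depending on $a$. For $n\geq 3$ we have $n/2 \geq 3/2 > 1$, so $(n/2)^{-2\sigma}$ is decreasing in $\sigma$ and therefore bounded above by $(n/2)^{-(1+2\delta)}$ on $\sigma > \tfrac{1}{2}+\delta$. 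This reduces the problem to the convergence of $\sum_{n\geq 3} \log(n)^{2-a} n^{-1-2\delta}$, which is clear since $\log(n)^{2-a}$ grows subpolynomially. The resulting constant depends only on $a$ and $\delta$, as required.

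There is essentially no obstacle here: the statement is a standard reproducing-kernel-type estimate for $\mathcal{D}_a$, and both the Cauchy--Schwarz splitting and the tail estimate are routine. The only mildly delicate point is keeping the geometric factor sharp at $|2^{-s}|$ rather than a larger power, which is precisely why the comparison is made with $n/2$ rather than $n$ in the last display.
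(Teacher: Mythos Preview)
Your proof is correct and follows precisely the approach indicated in the paper, which merely states that the estimate is a consequence of the Cauchy--Schwarz inequality without supplying details. Your argument fills in exactly those details: the Cauchy--Schwarz splitting of $\log(n)\,n^{-\Re s}$ and the elementary tail bound showing $\sum_{n\ge 2}\log(n)^{2-a}n^{-2\sigma}\le C(a,\delta)\,4^{-\sigma}$ for $\sigma>\tfrac12+\delta$.
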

	
	Our next goal is to establish a kind of Schwarz lemma for Dirichlet series. Note that the Schwarz--Pick lemma for the hyperbolic distance implies that 
	$$\liminf\limits_{|z|\rightarrow1^-}\frac{1-|\phi(z)|}{1-|z|}>0$$
	for any holomorphic self-map $\phi$ of $\mathbb{D}$, see  \cite[Lemma 1.4.5]{BCDM20}. The corresponding inequality does not hold for all self-maps of the right half-plane. However, for a Dirichlet series $\varphi\in\mathfrak{G}_0$ we will prove that
	\begin{equation}\label{15}
	\liminf\limits_{\Re s\rightarrow0^+}\frac{\Re\varphi(s)-\frac{1}{2}}{\Re s}=\delta>0.
	\end{equation} 
	This implies a quantitative version of \cite[Prop. 4.2]{GH99}. Namely, that for sufficiently small $\epsilon>0$,
	\begin{equation*}
	\varphi(\mathbb{C}_\epsilon)\subset\mathbb{C}_{\frac{1}{2}+\epsilon\delta}.
	\end{equation*}
	The key idea in proving \eqref{15} is to exploit the vertical translations of $\varphi \in\mathfrak{G}_0$ to restrict the limit to a half-strip, where the quantity in \eqref{15} can be shown to be uniformly bounded from below by virtue of Theorem~\ref{2.1}.
	\begin{lemma}\label{3.4}
		For every $\varphi\in \mathfrak{G}_0$ there exists a constant $C=C(\varphi)>0$ such that
		\begin{equation*}
		\Re s\leq C \left(\left(\Re s\right)^2+1\right)\left(\Re \varphi(s)-\frac{1}{2}\right),\qquad s\in\mathbb{C}_0.
		\end{equation*}
	\end{lemma}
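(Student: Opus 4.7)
The plan is to combine the Schwarz--Pick inequality (Theorem \ref{2.1}) applied to $\varphi \colon \mathbb{C}_0 \to \mathbb{C}_{\frac{1}{2}}$ with a compactness argument on the polytorus $\mathbb{T}^\infty$, producing a reference abscissa $\sigma_0$ at which $\Re \varphi(\sigma_0 + it) - \frac{1}{2}$ is uniformly bounded below in $t \in \mathbb{R}$. If $\varphi$ is a constant in $\mathbb{C}_{\frac{1}{2}}$ the lemma is trivial (via $\Re s \leq (\Re s)^2 + 1$), so I assume throughout that $\varphi$ is non-constant.

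First I would fix $\sigma_0 \geq 1$ large enough that the coefficients of $\varphi = \sum_n a_n n^{-s}$ satisfy $\sum_n |a_n| n^{-\sigma_0} < \infty$, so that $\chi \mapsto \varphi_\chi(\sigma_0)$ is continuous on the compact group $\mathbb{T}^\infty$ and its image $K \subset \mathbb{C}$ is compact. Each twist $\varphi_\chi$ is non-constant (twisting preserves coefficient moduli) and, by Kronecker's theorem, is a locally uniform limit of vertical translates of $\varphi$; hence $\varphi_\chi(\mathbb{C}_0) \subseteq \overline{\mathbb{C}_{\frac{1}{2}}}$, and the open mapping theorem improves this to $\varphi_\chi(\mathbb{C}_0) \subseteq \mathbb{C}_{\frac{1}{2}}$. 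Thus $K$ is a compact subset of the open half-plane, whence
$$\delta := \inf_{\chi \in \mathbb{T}^\infty} \left(\Re \varphi_\chi(\sigma_0) - \frac{1}{2}\right) > 0.$$
Since $\varphi(\sigma_0 + it) = \varphi_{\chi_t}(\sigma_0)$ for the character $\chi_t(n) = n^{-it}$, this yields $\Re \varphi(\sigma_0 + it) - \frac{1}{2} \geq \delta$ for every $t \in \mathbb{R}$.

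Next, for arbitrary $s = \sigma + it_0 \in \mathbb{C}_0$, I would set $s_1 = \sigma_0 + it_0$ and use the vertical invariance of the hyperbolic metric to compute $d_{\mathbb{C}_0}(s, s_1) = d_{\mathbb{C}_0}(\sigma, \sigma_0) = |\log(\sigma / \sigma_0)|$. Theorem \ref{2.1} then gives
$$d_{\mathbb{C}_{\frac{1}{2}}}(\varphi(s), \varphi(s_1)) \leq |\log(\sigma / \sigma_0)|.$$
From the explicit formula for $d_{\mathbb{C}_{\frac{1}{2}}}$, combined with the elementary bound $|w_1 + \overline{w_2} - 1| \geq \Re w_1 + \Re w_2 - 1 \geq \Re w_2 - \frac{1}{2}$ for $w_1, w_2 \in \mathbb{C}_{\frac{1}{2}}$, one has the one-sided estimate
$$d_{\mathbb{C}_{\frac{1}{2}}}(w_1, w_2) \geq \log \frac{\Re w_2 - \frac{1}{2}}{4 (\Re w_1 - \frac{1}{2})}.$$
Applying this with $w_1 = \varphi(s)$, $w_2 = \varphi(s_1)$ (so $\Re w_2 - \frac{1}{2} \geq \delta$), exponentiating, and using $\min(x, 1/x) \geq x/(x^2 + 1)$ yields
$$\Re \varphi(s) - \frac{1}{2} \geq \frac{\delta \, \sigma \, \sigma_0}{4 (\sigma^2 + \sigma_0^2)}.$$
Since $\sigma^2 + \sigma_0^2 \leq \sigma_0^2 ((\Re s)^2 + 1)$ for $\sigma_0 \geq 1$, the lemma follows with $C = 4 \sigma_0 / \delta$.

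The main obstacle is the intermediate claim that every twist $\varphi_\chi$ belongs to $\mathfrak{G}_0$, i.e., that $\varphi_\chi(\mathbb{C}_0)$ actually avoids the boundary line $\{\Re w = \frac{1}{2}\}$. Once this open-mapping step is secured, the rest is a routine contraction-and-compactness computation with the hyperbolic metric.
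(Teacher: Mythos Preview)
Your proof is correct and follows essentially the same approach as the paper: both reduce via vertical translation to a fixed reference abscissa (the paper takes $\sigma_0=1$, you take $\sigma_0\geq1$), use compactness to bound $\Re\varphi-\tfrac12$ from below on that line, and then apply the Schwarz--Pick contraction for the hyperbolic metric on half-planes. Your one-sided estimate $d_{\mathbb{C}_{1/2}}(w_1,w_2)\geq\log\frac{\Re w_2-1/2}{4(\Re w_1-1/2)}$ is a slightly cleaner packaging of the same hyperbolic computation the paper carries out, and the ``main obstacle'' you flag---that every twist $\varphi_\chi$ remains in $\mathfrak{G}_0$---is exactly what the paper uses implicitly when it asserts that $\varphi$ maps $\{\Re z=1\}$ into a compact subset of $\mathbb{C}_{1/2}$.
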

	\begin{proof}
We consider the vertical translations $\varphi(s+it)=\varphi_{\chi_t}(s)$, where $\chi_t=n^{-it},\,t\in\mathbb{R}$. Observing that $\varphi_{\chi_t}\in\mathfrak{G}_0$, we have
		\begin{align*}
		\log\left(\frac{\lambda_{\mathbb{C}_0}(z)}{\lambda_{\mathbb{C}_0}(\varphi_{\chi_t}(z)-\frac{1}{2})}\right)&=\log\left(\frac{\Re \varphi_{\chi_t}(z)-\frac{1}{2}}{\Re z}\right)\\
		&=\log\left(\frac{\left(|z+1|+|z-1|\right)^2}{4\Re z}\right)-\log\left(\frac{\left(|\varphi_{\chi_t}(z)+\frac{1}{2}|+|\varphi_{\chi_t}(z)-\frac{3}{2}|\right)^2}{4(\Re \varphi_{\chi_t}(z)-\frac{1}{2})}\right)\\
		&+2\log\left(\frac{|\varphi_{\chi_t}(z)+\frac{1}{2}|+|\varphi_{\chi_t}(z)-\frac{3}{2}|}{|z+1|+|z-1|}\right)\\
		&\geq d_{\mathbb{C}_0}(z,1)-d_{\mathbb{C}_0}(\varphi_{\chi_t}(z)-\frac{1}{2},1)+2\log\left(\frac{2}{|z+1|+|z-1|}\right).
		\end{align*}
		By the Schwarz--Pick lemma and the triangle inequality for the hyperbolic distance, we find from here that
		\begin{equation} \label{18}
		\log\left(\frac{\Re\varphi_{\chi_t}(z)-\frac{1}{2}}{\Re z}\right) \geq -d_{\mathbb{C}_0}(\varphi_{\chi_t}(1)-\frac{1}{2}, 1)+2\log\left(\frac{2}{|z+1|+|z-1|}\right).
		\end{equation}
		The crucial step is to note that the quantity $d_{\mathbb{C}_0}(\varphi_{\chi_t}(1)-\frac{1}{2}, 1)$ is uniformly bounded, since $\varphi$ maps the line $\Re z = 1$ into a compact subset of $\mathbb{C}_{1/2}$. Given $s \in \mathbb{C}_0$, we can therefore choose $z = \Re s$ and $t = \Im s$ to obtain that
		 		\begin{equation*}
		 	\Re s\leq C \left[\left(|z+1|+|z-1|\right)^2\right]\left(\Re\varphi(s)-\frac{1}{2}\right),
		 \end{equation*}
	 which is the desired inequality.
	\end{proof}

We next recall Littlewood's subordination principle, which implies that any holomorphic self-map of the unit disk generates a bounded composition operator on the Hardy space $H^2(\mathbb{D})$.
	\begin{lemma}[\cite{LIT25,SHAP93}]\label{3.5}
		Suppose $\phi$ is a holomorphic self-map of the unit disk $\mathbb{D}$. Then, for every $f\in H^2(\mathbb{D})$,
		\begin{equation*}
		\norm{f\circ\phi}_{H^2(\mathbb{D})}\leq \sqrt{\frac{1+|\phi(0)|}{1-|\phi(0)|}}\norm{f}_{H^2(\mathbb{D})}.
		\end{equation*}
	\end{lemma}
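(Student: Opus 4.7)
The plan is to prove the estimate by harmonic majorization, which is the standard textbook route. The starting point is that every $f \in H^2(\mathbb{D})$ has non-tangential boundary values $f^* \in L^2(\mathbb{T})$, and the subharmonic function $|f|^2$ admits a least harmonic majorant on $\mathbb{D}$, namely the Poisson integral
$$u(z) = \int_{\mathbb{T}} P_z(\zeta)\,|f^*(\zeta)|^2\,dm(\zeta), \qquad P_z(\zeta) = \frac{1-|z|^2}{|\zeta - z|^2},$$
which satisfies $u(0) = \norm{f}_{H^2(\mathbb{D})}^2$ by Parseval's identity, since $P_0\equiv 1$.

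The key observation is that composing a harmonic majorant of $|f|^2$ with any holomorphic self-map $\phi$ of $\mathbb{D}$ yields a harmonic majorant of $|f\circ\phi|^2$. Thus $u\circ\phi$ is harmonic on $\mathbb{D}$ and pointwise dominates $|f\circ\phi|^2$, so $f\circ\phi \in H^2(\mathbb{D})$ and
$$\norm{f\circ\phi}_{H^2(\mathbb{D})}^2 \leq (u\circ\phi)(0) = \int_{\mathbb{T}} P_{\phi(0)}(\zeta)\,|f^*(\zeta)|^2\,dm(\zeta).$$
To obtain a bound independent of $f$, I would then invoke the elementary pointwise estimate
$$P_{\phi(0)}(\zeta) \leq \frac{1-|\phi(0)|^2}{(1-|\phi(0)|)^2} = \frac{1+|\phi(0)|}{1-|\phi(0)|}, \qquad \zeta \in \mathbb{T},$$
which follows from the reverse triangle inequality $|\zeta - \phi(0)| \geq 1 - |\phi(0)|$. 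Substituting this bound back and using Parseval yields
$$\norm{f\circ\phi}_{H^2(\mathbb{D})}^2 \leq \frac{1+|\phi(0)|}{1-|\phi(0)|}\norm{f}_{H^2(\mathbb{D})}^2,$$
and taking square roots finishes the proof.

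Since the statement is classical, no step presents a serious obstacle; both the identification $u(0) = \norm{f}_{H^2(\mathbb{D})}^2$ and the fact that $u\circ\phi$ is a harmonic majorant of $|f\circ\phi|^2$ are foundational facts. A possible alternative route would be to first reduce to the case $\phi(0)=0$ by precomposing with the disk automorphism $\psi_{\phi(0)}$ that interchanges $0$ and $\phi(0)$, establish the fixed-point inequality $\norm{g\circ\tilde\phi}_{H^2(\mathbb{D})} \leq \norm{g}_{H^2(\mathbb{D})}$ by Littlewood's original polynomial-coefficient induction, and then separately verify that the composition operator induced by $\psi_{\phi(0)}$ has norm squared equal to $(1+|\phi(0)|)/(1-|\phi(0)|)$ by a direct change-of-variable on the unit circle; the harmonic majorant approach above is, however, cleaner and avoids splitting into cases.
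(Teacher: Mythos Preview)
Your argument is correct and is in fact the standard harmonic-majorant proof of Littlewood's subordination principle (as in Shapiro's book). The paper does not supply its own proof of this lemma; it simply cites \cite{LIT25,SHAP93} and uses the inequality as a black box, so there is nothing to compare against.
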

	We also borrow the following lemma from \cite{GH99}.
	\begin{lemma}[\cite{GH99}]\label{3.6}
		Let $a\leq1$ and let $\{p_j\}_{j\geq1}$ be the increasing sequence of primes. Then, the function $f(s)=\sum\limits_{j\geq1}a_{p_j}p_j^{-s}$, with coefficients  $a_{p_j}=\left(\sqrt{p_j}\log(p_j)^{1+\frac{a}{2}}\right)^{-1}$, satisfies the following:
	\end{lemma}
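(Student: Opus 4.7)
The plan is to verify the two properties that such a construction is designed to exhibit: that $f \in \mathcal{D}_a$, and that $f$ blows up along the positive real axis as $\sigma \to 0^+$, witnessing that $f$ is unbounded on $\mathbb{C}_0$. The coefficients are calibrated so that each of these two series sits right at the edge of convergence, and the prime number theorem supplies the asymptotics that decide which side of the edge we fall on in each case.

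For membership in $\mathcal{D}_a$, orthogonality of the prime monomials in $\mathcal{D}_a$ gives
$$\|f\|_a^2 = \sum_{j \geq 1} |a_{p_j}|^2 \log(p_j)^a = \sum_{j \geq 1} \frac{1}{p_j \log(p_j)^{2}}.$$
By the prime number theorem, $p_j \asymp j \log j$, so this sum is comparable to $\sum_j j^{-1} (\log j)^{-3}$, which converges. The exponent $1 + a/2$ in the definition of $a_{p_j}$ is chosen exactly so that, combined with the weight $\log(p_j)^a$, a $\log(p_j)^{-2}$ factor is pulled out — the minimal amount needed to make the sum finite for every $a \leq 1$ (in fact for every real $a$).

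For the growth along the real axis, since all $a_{p_j}$ are positive reals, the function $f(\sigma)$ is real and monotone decreasing in $\sigma > 0$. By monotone convergence,
$$\lim_{\sigma \to 0^+} f(\sigma) = \sum_{j \geq 1} a_{p_j} = \sum_{j \geq 1} \frac{1}{\sqrt{p_j} \log(p_j)^{1+a/2}}.$$
Another application of PNT makes the tail comparable to $\sum_j j^{-1/2} (\log j)^{-(3/2 + a/2)}$, which diverges, since the factor $j^{-1/2}$ is too large to be tamed by any power of logarithm. Consequently $f(\sigma) \to +\infty$ as $\sigma \to 0^+$, so in particular $f$ is unbounded on $\mathbb{C}_0$.

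If the enumerated conclusions in the lemma include more refined quantitative statements — for instance, an explicit growth rate of $|f(\sigma)|$ as $\sigma \to 0^+$, or a statement about the vertical limits $f_\chi$ — the same PNT asymptotics combined with Abel summation should deliver them directly. There is no genuine analytic obstacle here; the entire content is the careful bookkeeping required to see that $\sum p_j^{-1}\log(p_j)^{-2}$ converges while $\sum p_j^{-1/2}\log(p_j)^{-1-a/2}$ diverges, both by a whisker, which is precisely what the choice of $a_{p_j}$ engineers.
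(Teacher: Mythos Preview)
The actual enumerated conclusions are (i) $f\in\mathcal{D}_a$ and $\sigma_c(f)=\tfrac12$, and (ii) $\sigma_c(f_\chi)=0$ for almost every $\chi\in\mathbb{T}^\infty$. Your verification that $f\in\mathcal{D}_a$ is correct. For the rest of (i), however, the claim is not that $f$ blows up as $\sigma\to 0^+$ but that the series already diverges for every $\sigma<\tfrac12$: by the prime number theorem, $a_{p_j}p_j^{-\sigma}\asymp j^{-(\sigma+1/2)}(\log j)^{-(\sigma+3/2+a/2)}$, which is not summable once $\sigma+\tfrac12<1$. Your monotone convergence step tacitly treats $f(\sigma)$ as finite for all $\sigma>0$, which is false here; the same PNT bookkeeping, applied near $\sigma=\tfrac12$ rather than near $0$, gives the correct conclusion $\sigma_c(f)=\tfrac12$.

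The substantive gap is (ii), and the suggestion that it follows from ``PNT asymptotics combined with Abel summation'' is not right. That $\sigma_c(f_\chi)=0$ almost everywhere is a probabilistic statement, resting on the fact that the $\chi(p_j)$ are independent uniform random variables on $\mathbb{T}^\infty$. The upper bound $\sigma_c(f_\chi)\le 0$ a.e.\ is an instance of the Rademacher--Menchov theorem (as the paper itself notes in Section~2), or equivalently here of the Khinchin--Kolmogorov criterion, since $\sum_j|a_{p_j}|^2p_j^{-2\sigma}<\infty$ for every $\sigma>0$; the lower bound comes from the divergence of this same $\ell^2$-sum for $\sigma<0$. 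No deterministic summation argument detects this, and it is exactly this almost-sure convergence in $\mathbb{C}_0$, together with $\sigma_c(f)=\tfrac12$, that the necessity half of Theorem~\ref{3.7} exploits through the composition rule~\eqref{eq:comprule}.
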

	\begin{enumerate}[(i)]
		\item  $f\in\mathcal{D}_a$ and $\sigma_c(f)=\frac{1}{2}$.
		\item $\sigma_c(f_\chi)=0$, for almost every $\chi\in\mathbb{T}^\infty$.\\
	\end{enumerate}
	
	As promised in the introduction, we now provide a proof of the characterization of the bounded composition operators on the Bergman spaces $\mathcal{D}_a,$ $a\leq0$, which is new for Dirichlet series symbols. To do so, we will combine the original argument of Gordon and Hedenmalm \cite{GH99} with the Schwarz lemma for Dirichlet series.
	\begin{theorem}[\cite{Bail15, BB16}]\label{3.7}
		For $a > 0$, the class $\mathfrak{G}$ determines all bounded composition operators on the Bergman space of Dirichlet series $\mathcal{D}_{-a}.$
	\end{theorem}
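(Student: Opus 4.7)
The proof splits into necessity and sufficiency. Necessity amounts to running the argument of Gordon--Hedenmalm against the test Dirichlet series of Lemma \ref{3.6}, which belongs to $\mathcal{D}_{-a}$ and has $\sigma_c(f)=1/2$: this forces any hypothetically bounded $C_\psi$ to have symbol in $\mathfrak{G}$, as in \cite{GH99}. For sufficiency I split $\mathfrak{G}=\mathfrak{G}_{\geq 1}\cup\mathfrak{G}_0$. The case $\psi\in\mathfrak{G}_{\geq 1}$ can be handled directly: with $\psi(s)=c_0 s+\varphi(s)$, $c_0\geq 1$, and $\varphi(\mathbb{C}_0)\subset\mathbb{C}_0$, the bound $\Re\psi(s)\geq c_0\Re s$ together with Schwarz--Pick on $\mathbb{C}_0$ is enough to majorize the Littlewood--Paley integrand in \eqref{1} for $C_\psi f$ by a rescaled integrand for $f$, via a routine change of variables.

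The substantive case is $\varphi\in\mathfrak{G}_0$. Here I would transfer to the unit disk by introducing, for each $\beta>0$ and a parameter $\tilde\beta>0$ to be chosen, the self-map of $\mathbb{D}$
\[
\Phi_\beta=T_{\tilde\beta}^{-1}\circ S_{-1/2}\circ\varphi\circ T_\beta,
\]
and factoring $f\circ\varphi\circ T_\beta=(f\circ S_{-1/2}^{-1}\circ T_{\tilde\beta})\circ\Phi_\beta$. The classical weighted subordination principle on the unit disk, applied in the weighted Bergman-type space on $\mathbb{D}$ corresponding to $\mathcal{D}_{-a}$, yields
\[
\|f\circ\varphi\circ T_\beta\|_{\mathrm{disk}}^2\leq C\bigl(|\Phi_\beta(0)|\bigr)\,\|f\circ S_{-1/2}^{-1}\circ T_{\tilde\beta}\|_{\mathrm{disk}}^2,
\]
where $C(r)\to\infty$ only as $r\to 1^-$. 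Taking $\beta\to\infty$ and invoking Bergman counterparts of Lemmas \ref{3.1} and \ref{3.2}, which I would establish by rerunning the Hardy-space arguments of \cite{GH99,QEF15,BR17} against \eqref{1}, the left-hand side tends to $\|C_\varphi f\|_{-a}^2$ and the right-hand side is dominated by a constant multiple of $\|f\|_{-a}^2$. Lemma \ref{3.4} enters decisively at this point: its pointwise consequence $\Re\varphi(\beta)-1/2\geq\beta/(C(\beta^2+1))$ at $s=\beta$ permits a choice $\tilde\beta=\tilde\beta(\beta)$ for which $|\Phi_\beta(0)|$ stays bounded away from $1$ uniformly in $\beta$.

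The main obstacle is the degenerate subcase $\Re\varphi(+\infty)=1/2$, where $\Re\varphi(\beta)-1/2\to 0$ as $\beta\to\infty$, so that the naive scaling $\tilde\beta=\Re\varphi(\beta)-1/2$ cannot by itself counteract a nonvanishing $\Im\varphi(\beta)$. I would resolve this by exploiting the vertical translation invariance of $\mathfrak{G}_0$, namely that every translate $\varphi_{\chi_t}$ obeys Lemma \ref{3.4} with the same constant, together with a careful coupling of $\tilde\beta$ to $\Im\varphi(\beta)$, in the spirit of the vertical-translation argument already used inside the proof of Lemma \ref{3.4} itself.
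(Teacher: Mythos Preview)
Your strategy differs from the paper's and, as written, contains a genuine gap. The paper does not transfer $\mathcal{D}_{-a}$ directly to a weighted Bergman space on the disk. Instead it uses the slicing identity $\|f\|_{-a}^2 \approx \int_0^\infty \|f_\sigma\|_0^2 \, d\mu(\sigma)$, with $d\mu(\sigma) = \frac{2^a}{\Gamma(a)}\sigma^{a-1} e^{-2\sigma}\,d\sigma$, and bounds each Hardy slice $\|(C_\varphi f)_\sigma\|_0 = \|C_{\varphi_\sigma} f\|_0$ by $C\|f_{\lambda\sigma}\|_0$ via unweighted Littlewood subordination together with Lemmas \ref{3.1} and \ref{3.2}. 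Lemma \ref{3.4} enters not at a single point but uniformly: its consequence $\Re\varphi(s) - \tfrac12 \geq \lambda\,\Re s$ for $0 < \Re s < 1$ yields the inclusion $\varphi_\sigma(\mathbb{C}_0) \subset \mathbb{C}_{1/2+\lambda\sigma}$, and this is precisely what allows the shift $S_{\lambda\sigma + 1/2}$ in the transfer for each slice. Integrating over $\sigma$ then recovers the Bergman norm.

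Your direct route runs into a scaling obstruction. A computation with $g(s)=n^{-s}$ shows that any Bergman analogue of Lemma \ref{3.1} must take the form $\beta^a\|g\circ T_\beta\|_{D_{1+a}}^2 \to c\|g\|_{-a}^2$; without the factor $\beta^a$ the limit is zero. After multiplying your subordination inequality by $\beta^a$, you therefore need $\tilde\beta \to \infty$ at a rate comparable to $\beta$ for the right-hand side to stay finite. But then $\Phi_\beta(0) = T_{\tilde\beta}^{-1}(\varphi(\beta)-\tfrac12)$ satisfies $1 - |\Phi_\beta(0)| \asymp (\Re\varphi(\beta)-\tfrac12)/\tilde\beta \to 0$, because $\varphi(\beta) \to \varphi(+\infty)$ is a finite point, and the subordination constant blows up. No choice of $\tilde\beta(\beta)$ avoids this, so Lemma \ref{3.4} evaluated only at $s=\beta$ cannot save the argument. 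Incidentally, the ``degenerate subcase'' you isolate never occurs: for non-constant $\varphi \in \mathfrak{G}_0$ one always has $\Re\varphi(+\infty) > \tfrac12$, since otherwise the leading non-trivial term forces $\Re\varphi(\sigma+it) < \tfrac12$ for suitable $t$ and large $\sigma$.
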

	\begin{proof}
		It was essentially already proven in \cite{GH99} that it is necessary that $\psi \in \mathfrak{G}$ in order for $C_\psi \colon \mathcal{D}_a \to \mathcal{D}_a$ to be bounded. Indeed, by \cite[Theorem  8.3.1]{QQ20}, $P\circ \psi$ is a Dirichlet series for every polynomial $P$ if and only if the symbol $\psi:\mathbb{C}_\frac{1}{2}\rightarrow \mathbb{C}_\frac{1}{2}$ has the form $\psi(s)=c_0s+\varphi(s)$, where $c_0$ is a non-negative integer and $\varphi$ is a Dirichlet series. The mapping properties of $\psi$ are deduced from the composition rule \eqref{eq:comprule}  and Lemma~\ref{3.6}, noting that  $\varphi(\mathbb{C}_0)=\varphi_\chi(\mathbb{C}_0)$ for every $\chi\in\mathbb{T}^\infty.$
		
		Conversely, suppose $\psi \in \mathfrak{G}$. Let $\mu$ denote the probability measure  $d\mu(\sigma) = \frac{2^a}{\Gamma(a)}\sigma^{a-1}e^{-2\sigma} \, d\sigma$ on $(0,\infty)$, observing that
		\begin{equation*}
		\norm{f}_{-a}^2\approx\int\limits_{\mathbb{T}^\infty}\int\limits_0^{\infty}\int\limits_{0}^1|f_\chi(\sigma+it)|^2 \,dt \, d\mu(\sigma) \, dm_\infty(\chi)=\int\limits_0^{\infty}\norm{f_\sigma}^2_0 \, d\mu(\sigma),
		\end{equation*}
		where $f_\sigma=f(\cdot+\sigma)$.
		First we will consider the case when $\psi(s)=c_0s+\varphi(s)\in \mathfrak{G}_{\geq 1}$. In this case the analogue of the Schwarz lemma is trivial: $\Re s\leq \Re\psi(s)$. For a Dirichlet polynomial $f$ and a positive number $\beta>0$, we define the functions
		$$F_\beta=f\circ S_{\sigma}\circ T_\eta$$
		and 
		$$g_\beta=T_\eta^{-1}\circ S^{-1}_{\sigma}\circ\psi_\sigma\circ T_\beta,$$
		where $\eta=c_0(\beta+\sigma)-\sigma$. Note that $g_\beta(0) \to 0$ as $\beta \to \infty$. By Lemma \ref{3.1} and Lemma \ref{3.5}, we have
		\begin{align*}
		\norm{C_{\psi_\sigma}(f)}_0&=\lim\limits_{\beta\rightarrow\infty}\norm{f\circ\psi_\sigma}_{H_i^2(\mathbb{C}_0,\beta)}
		=\lim\limits_{\beta\rightarrow\infty}\norm{F_\beta\circ g_\beta}_{H^2(\mathbb{D})}
		\leq \lim\limits_{\beta\rightarrow\infty}\sqrt{\frac{1+|g_\beta(0)|}{1-|g_\beta(0)|}}\norm{F_\beta}_{H^2(\mathbb{D})}\\
		&= \lim\limits_{\eta\rightarrow\infty}\norm{f\circ S_{\sigma}\circ T_\eta}_{H^2(\mathbb{D})} = \norm{f_\sigma}_0.
		\end{align*}
		Therefore
		\begin{align*}
		\norm{C_\psi(f)}_{-a}^2&\approx\int\limits_0^{\infty}\norm{C_{\psi_\sigma}(f)}^2_0 \, d\mu(\sigma)\leq\int\limits_0^{\infty}\norm{f_\sigma}^2_0 \, d\mu(\sigma)\approx\norm{f}_{-a}^2,
		\end{align*}
		which demonstrates that the composition operator is bounded in this case.
		
		Suppose next that $\varphi\in \mathfrak{G}_0$. By a vertical translation of the argument $f$, there is no loss of generality in assuming that $\varphi(+\infty) > 1/2$. By Lemma \ref{3.4} there exists a constant $\lambda=\lambda(\varphi)>0$ such that 
		\begin{equation*}
		\lambda \Re s\leq \Re\varphi(s)-\frac{1}{2}, \qquad 0 < \Re s < 1.
		\end{equation*}
		In this case, for a Dirichlet polynomial $f$ and positive numbers $\beta>0$, we define the functions
		$$F=f\circ S_{\lambda\sigma+\frac{1}{2}}\circ T_\eta$$
		and
		$$g_\beta=T_\eta^{-1}\circ S^{-1}_{\lambda\sigma+\frac{1}{2}}\circ\varphi_\sigma\circ T_\beta,$$
		where $\eta= \varphi(+\infty)-\lambda\sigma-\frac{1}{2}$ and $0<\sigma<\delta := \frac{\varphi(+\infty)-\frac{1}{2}}{2\lambda}$. Then we again have that $\lim_{\beta \to \infty} g_\beta(0) = 0$, and
		\begin{align*}
		\norm{C_{\varphi_\sigma}(f)}_0&=\lim\limits_{\beta\rightarrow\infty}\norm{f\circ\varphi_\sigma}_{H_i^2(\mathbb{C}_0,\beta)}
		=\lim\limits_{\beta\rightarrow\infty}\norm{F\circ g_\beta}_{H^2(\mathbb{D})} \\
		&\leq \lim\limits_{\beta\rightarrow\infty}\sqrt{\frac{1+|g_\beta(0)|}{1-|g_\beta(0)|}}\norm{F}_{H^2(\mathbb{D})} = \norm{f_{\lambda\sigma}}^2_{H_i^2(\mathbb{C}_\frac{1}{2},\eta)}.
		\end{align*}
		By Lemma \ref{3.2} we conclude that there is a constant such that
		$$\norm{C_{\varphi_\sigma}(f)}_0\leq C \norm{f_{\lambda\sigma}}_0, \qquad 0 < \sigma < \delta.$$
		For $\sigma \geq \delta$, we simply note that $\varphi_\sigma(\mathbb{C}_0) \subset \mathbb{C}_{1/2 + \varepsilon}$ for some $\varepsilon > 0$, and therefore by the Cauchy--Schwarz inequality that 
		$$\sup_{s \in \mathbb{C}_0} |f(\varphi_\sigma(s))| \leq C \|f\|_{-a}.$$
		Hence $\|C_{\varphi_\sigma}(f)\|_0 \leq C \|f\|_{-a},$ as can be seen for example from Carlson's theorem, see \cite[Lemma~3.2]{HLS97}.
		We conclude that
		\begin{equation*}
		\norm{C_\varphi(f)}_{-a}^2\approx\int\limits_0^{\infty}\norm{C_{\varphi_\sigma}(f)}^2_0\, d\mu(\sigma)\\
		\leq C \int\limits_0^{\delta}\norm{f_{\lambda\sigma}}^2_0d\mu(\sigma)+C \norm{f}_{-a}^2
		\leq C\norm{f}^2_{-a}. \qedhere
		\end{equation*}
	\end{proof}
	\begin{remark}
		Using the same argument one can prove that Theorem~\ref{3.7} holds for all Bergman-like spaces of Dirichlet series \cite{MCC04},
		\begin{equation*}
		\mathcal{D}_\mu=\left\{f(s)=\sum_{n\geq1}\frac{a_n}{n^s}:\norm{f}_\mu^2=\sum_{n\geq1}|a_n|^2w_\mu(n)< \infty \right\},
		\end{equation*}
		assuming that the coefficients are of the form $$w_\mu(n)=\int\limits_{0}^\infty\frac{d\mu(\sigma)}{n^{2\sigma}},$$
		where $\mu$ is a probability measure on $(0,\infty)$ with $0\in \supp(\mu)$ and satisfying 
		\begin{equation}\label{21}
		\int\limits_{0}^\infty\frac{d\mu(\sigma)}{n^{2\lambda\sigma}}\leq C(\lambda)\int\limits_{0}^\infty\frac{d\mu(\sigma)}{n^{2\sigma}}, \qquad 0 < \lambda < 1.
		\end{equation}
		Every symbol $\psi\in \mathfrak{G}_{\geq1}$ induces a contraction $C_\psi$ on $\mathcal{D}_{\mu}$, even without the condition \eqref{21}.
	\end{remark}
	\section{Weighted mean counting functions}
	In this section, we will investigate the properties of the weighted counting function $M_{\varphi, a}(w,\sigma),$ $\sigma>0$, where $\varphi$ is a Dirichlet series with abscissa of convergence $\sigma_u(\varphi)\leq 0$. Firstly, we will prove the existence of this function, generalizing \cite[Theorem~6.2]{BP21}. Monotonicity then ensures the existence of the limit function $M_{\varphi, a}(w)$ (finitely or infinitely). Secondly, following the ideas of Aleman \cite{AL92}  and Shapiro \cite{SHAP87} from the disk case, we will give a weak version of the submean value property for the weighted counting function $M_{\varphi,a}(w)$, $a > 0$. Note that the (strong) submean value property of $M_{\varphi, 1}(w)$ was proven in \cite[Lemma~6.5]{BP21}.
	\subsection{Existence}
	In \cite{BP21}, the existence of $M_{\varphi, 1}(w,\sigma)$ was established through Littlewood's lemma \cite[Sec. 9.9]{TIT86}, which is a rectangular version of Jensen's formula \cite[Sec. 10.2]{SHAP87}. We will replace Littlewood's lemma with the following theorem, which allows us to count the zeros of a non-zero holomorphic function in an arbitrary domain.
	\begin{theorem}[\cite{RAN95}]\label{4.1}
		Let $u\not\equiv-\infty$ be a subharmonic function on a domain $\Omega$ in $\mathbb{C}$. Then, there exists a unique Radon measure $\Delta u$ on $\Omega$ such that for every compactly supported function $v \in C^\infty(\Omega)$, it holds that 
		\begin{equation*}
		\int\limits_{\Omega}v\Delta u=\int\limits_{\Omega}u\Delta v \, dA.
		\end{equation*}
		In the special case that $u=\log|f|$, where  $f\not\equiv0$ is a holomorphic function on the domain $\Omega$, the measure $\frac{1}{2\pi}\Delta u$ is the sum of Dirac masses at the zeros of $f$, counting multiplicity.
	\end{theorem}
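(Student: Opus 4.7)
The plan is to follow the classical potential-theoretic route. First observe that any subharmonic $u \not\equiv -\infty$ on the connected domain $\Omega$ is locally integrable: upper semicontinuity bounds $u$ above on compacta, while the submean value inequality applied at any point where $u(z_0) > -\infty$ gives $u(z_0) \leq \frac{1}{\pi r^2}\int_{D(z_0,r)} u \, dA$, forcing $u \in L^1(D(z_0,r))$; since $\{u > -\infty\}$ is dense in $\Omega$, this covers every compact subset by finitely many such disks. Consequently the assignment
$$T_u \colon v \mapsto \int_\Omega u \, \Delta v \, dA, \qquad v \in C_c^\infty(\Omega),$$
defines a distribution on $\Omega$. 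Uniqueness of a representing Radon measure is automatic, so the task reduces to verifying that $T_u$ is a \emph{positive} distribution and then invoking the Schwartz--Riesz representation theorem for positive distributions.

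For positivity I would use standard mollification. Let $\rho_\epsilon$ be a non-negative radial mollifier and set $u_\epsilon = u * \rho_\epsilon$, which is smooth and subharmonic on $\Omega_\epsilon = \{z : \dist(z, \partial \Omega) > \epsilon\}$, so $\Delta u_\epsilon \geq 0$ pointwise. Moreover $u_\epsilon$ decreases pointwise to $u$ as $\epsilon \to 0^+$ (this is where the submean value property for spherical means is needed), and in particular $u_\epsilon \to u$ in $L^1_{\mathrm{loc}}$. For any $v \in C_c^\infty(\Omega)$ with $v \geq 0$ and $\epsilon$ small enough that $\supp v \subset \Omega_\epsilon$, Green's identity gives
$$\int_\Omega u_\epsilon \, \Delta v \, dA = \int_\Omega (\Delta u_\epsilon) \, v \, dA \geq 0.$$
Letting $\epsilon \to 0^+$ yields $T_u(v) \geq 0$, and Schwartz's theorem then produces a unique positive Radon measure $\Delta u$ on $\Omega$ with $T_u(v) = \int_\Omega v \, d(\Delta u)$.

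For the special case $u = \log|f|$ with $f \not\equiv 0$ holomorphic: on the complement of the zero set $Z(f)$, the function $\log|f|$ is harmonic (as the real part of a local branch of $\log f$), so its distributional Laplacian vanishes there. Near a zero $z_j$ of multiplicity $m_j$, factor $f(z) = (z-z_j)^{m_j} g(z)$ with $g$ holomorphic and non-vanishing on some neighborhood, so that
$$\log|f(z)| = m_j \log|z - z_j| + \log|g(z)|,$$
where the second term is harmonic. The classical identity $\Delta \log|z - z_j| = 2\pi \delta_{z_j}$ (verified by Green's theorem on an annulus $\{r < |z-z_j| < R\}$ followed by $r \to 0^+$) yields the local formula, and patching via a partition of unity over the locally finite set $Z(f)$ gives $\frac{1}{2\pi}\Delta u = \sum_{z_j \in Z(f)} m_j \delta_{z_j}$ globally. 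The main technical obstacle is the positivity argument, which rests on two standard but non-trivial facts about subharmonic functions: that convolution with a non-negative radial mollifier preserves subharmonicity, and that the resulting family decreases monotonically to $u$. With these in hand the remainder of the proof is essentially bookkeeping.
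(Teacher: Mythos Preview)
The paper does not prove this theorem: it is stated as a quotation from \cite{RAN95} and used as a black box, with no proof given. Your proposal is the standard potential-theoretic argument (local integrability of subharmonic functions, positivity of the distributional Laplacian via radial mollification and monotone approximation, then Riesz representation; the formula for $\log|f|$ via local factorization and $\Delta\log|z-z_j|=2\pi\delta_{z_j}$), which is essentially what one finds in the cited reference. There is nothing to compare against in the paper itself, and your argument is correct.
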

	The almost periodicity of the Dirichlet series $\varphi$ in $\mathbb{C}_{\sigma_0}$, $\sigma_0>0$, implies an argument principle for the unweighted counting function $M_{\varphi, 0}$, see \cite{JT45}.
	\begin{lemma}\label{4.2}
		Suppose that $\varphi$ is a Dirichlet series with abscissa of uniform convergence $\sigma_u(\varphi)\leq 0$. If $\varphi(+\infty)\neq0$, $\{\Re s=\sigma_0\}$ is a zero-free line for the function $\varphi$ and $\{T_j\}_{j\geq1}$ is an increasing sequence  of positive real numbers, relatively dense in $[0,+\infty)$, such that 
		$$|\varphi(\sigma+iT_j)|\geq \delta>0, \qquad \sigma\geq\sigma_0,$$
		then
		$$M_{\varphi, 0}(0,\sigma_0)=-\lim_{j\rightarrow\infty}\frac{1}{2 T_j}\int\limits_{-T_j}^{T_j}\frac{\varphi'(\sigma_0+it)}{\varphi(\sigma_0+it)}dt.$$
	\end{lemma}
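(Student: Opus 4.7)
The plan is to apply the argument principle on a finite rectangle $R_j$ with vertices $\sigma_0 \pm iT_j$ and $\sigma_\infty \pm iT_j$, where $\sigma_\infty > \sigma_0$ is chosen large enough that $\varphi$ has no zeros in the half-plane $\{\Re s \ge \sigma_\infty\}$. Such a $\sigma_\infty$ exists since $\sigma_u(\varphi) \le 0$ implies $\varphi(s) \to \varphi(+\infty) \ne 0$ uniformly in $\Im s$ as $\Re s \to \infty$. The hypotheses guarantee that $\varphi$ does not vanish on $\partial R_j$: the left side is zero-free by assumption, the right side is zero-free by the choice of $\sigma_\infty$, and the horizontal sides satisfy $|\varphi(\sigma \pm iT_j)| \ge \delta > 0$ for $\sigma \ge \sigma_0$.

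Applying the argument principle (a consequence of Theorem~\ref{4.1}), the number $N_j$ of zeros of $\varphi$ inside $R_j$, counted with multiplicity, equals $\tfrac{1}{2\pi i}\oint_{\partial R_j} \varphi'(s)/\varphi(s)\,ds$. Because $\varphi$ has no zeros to the right of $\sigma_\infty$, $N_j$ coincides with the zero count in $\{\Re s > \sigma_0\} \cap \{|\Im s| < T_j\}$ that appears in the definition of $M_{\varphi,0}(0,\sigma_0)$. Decomposing the contour integral into the four sides of $R_j$ yields
\begin{equation*}
2\pi N_j = -\int_{-T_j}^{T_j}\frac{\varphi'(\sigma_0 + it)}{\varphi(\sigma_0+it)}\,dt + \int_{-T_j}^{T_j}\frac{\varphi'(\sigma_\infty + it)}{\varphi(\sigma_\infty+it)}\,dt + H_j,
\end{equation*}
where $H_j$ collects the contributions from the two horizontal segments.

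The task is now to show that the last two terms on the right are $O(1)$ as $j \to \infty$, so that they are annihilated upon dividing by $2T_j$. For the right-hand vertical integral, the choice of $\sigma_\infty$ ensures that $\log \varphi$ admits a single-valued bounded holomorphic branch on $\Re s \ge \sigma_\infty$, and the integral telescopes as $-i\bigl(\log\varphi(\sigma_\infty + iT_j) - \log\varphi(\sigma_\infty - iT_j)\bigr)$, which is uniformly bounded. For $H_j$, since $\sigma_u(\varphi) \le 0$, $\varphi$ is bounded on $\mathbb{C}_{\sigma_0/2}$, and hence $\varphi'$ is bounded there by Cauchy estimates; combined with the lower bound $|\varphi(\sigma \pm iT_j)| \ge \delta$, we obtain
$$|H_j| \le 2(\sigma_\infty - \sigma_0)\,\frac{\sup_{\mathbb{C}_{\sigma_0/2}} |\varphi'|}{\delta},$$
a constant independent of $j$.

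The main obstacle is then passing from the sequence $\{T_j\}$ back to the full limit defining $M_{\varphi,0}(0,\sigma_0)$. This is where one invokes the result of Jessen and Tornehave, which guarantees the existence of the limit $M_{\varphi,0}(0,\sigma_0) = \lim_{T\to\infty} \pi N(T)/T$. Since $\{T_j\}$ is relatively dense and increasing, $T_j \to \infty$, and the subsequential limit $\lim_j \pi N_j/T_j$ must equal $M_{\varphi,0}(0,\sigma_0)$. Dividing the displayed identity by $2T_j$ and letting $j \to \infty$ therefore produces the claimed formula; the sign works out because the $\sigma_0$-integral carries a minus sign from the orientation of $\partial R_j$.
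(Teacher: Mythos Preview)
Your proof is correct and follows essentially the same approach as the paper: apply the argument principle on the rectangle $R_j$, show that the contributions from the right vertical side and the two horizontal sides are $O(1)$, divide by $2T_j$, and use the existence of $M_{\varphi,0}(0,\sigma_0)$ (Jessen--Tornehave) to identify the subsequential limit with the full one. The only minor difference is in the handling of the integral along $\Re s=\sigma_\infty$: the paper observes that $\varphi'/\varphi$ is a Dirichlet series with vanishing constant term, so its vertical mean tends to zero, whereas you bound the integral directly via a single-valued branch of $\log\varphi$ on $\mathbb{C}_{\sigma_\infty}$; both are valid.
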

	\begin{proof}
		Let $\sigma_\infty>0$ be such that the equation $\varphi(s)=0$ has no solution for $\Re s\geq\sigma_\infty$.
		We will denote by $R_j$ the rectangle with vertices at $\sigma_0\pm iT_j,\,\sigma_\infty\pm iT_j$. By the argument principle, we then have that
		\begin{multline*}
		M_{\varphi,0}(0, \sigma_0, T_j)=\frac{1}{2 iT_j}\int\limits_{\partial R_j}\frac{\varphi'(\zeta)}{\varphi(\zeta)}d\zeta = \\
		\frac{1}{2T_j}\left(-\int\limits_{-T_j}^{T_j}\frac{\varphi'(\sigma_0+it)}{\varphi(\sigma_0+it)}dt + i\int\limits_{\sigma_0}^{\sigma_\infty}\frac{\varphi'(\sigma+iT_j)}{\varphi(\sigma+iT_j)}d\sigma - i\int\limits_{\sigma_0}^{\sigma_\infty}\frac{\varphi'(\sigma-iT_j)}{\varphi(\sigma-iT_j)}d\sigma+\int\limits_{-T_j}^{T_j}\frac{\varphi'(\sigma_\infty+it)}{\varphi(\sigma_\infty+it)}dt\right).
		\end{multline*}
		We observe that the first coefficient of the Dirichlet series $f=\frac{\varphi'}{\varphi}$ satisfies  $f(+\infty)=0$. Thus, letting $T_j\rightarrow \infty$ and then $\sigma_\infty\rightarrow\infty$ follows that
		\begin{equation*}
		M_{\varphi, 0}(0,\sigma_0)=\lim_{j\rightarrow\infty}M_{\varphi,0}(0, \sigma, T_j)= -\lim_{j\rightarrow\infty}\frac{1}{2 T_j}\int\limits_{-T_j}^{T_j}\frac{\varphi'(\sigma_0+it)}{\varphi(\sigma_0+it)}dt. \qedhere
		\end{equation*}
	\end{proof}
	We begin by proving a special case of Theorem \ref{1.1}.
	\begin{theorem}\label{4.3}
		Let $\varphi$ be a Dirichlet series such that $\sigma_u(\varphi)\leq 0$,  and let $w \neq \varphi(+\infty)$ be such that $\{\Re s=\sigma_0\}$ is a zero free line for the function $\varphi-w$. Then, for every  $a\in\mathbb{R}$, the counting function $M_{\varphi,a}(w,\sigma_0)$ exists and satisfies
		\begin{equation}\label{23}
		M_{\varphi,a}(w,\sigma_0)=M_{\varphi,0}(w,\sigma_0)\sigma_0^{a}+a\int\limits_{\sigma_0}^{\infty}t^{a-1}M_{\varphi,0}(w,t) \, dt.
		\end{equation}
		Furthermore, for sufficiently large $\sigma_\infty  > 0$,
		\begin{equation}\label{24}
		\begin{aligned}
		M_{\varphi,a}(w,\sigma_0) &- M_{\varphi,0}(w,\sigma_0)\sigma_0^{a} = \\ &a\sigma_0^{a-1}\mathcal{J}_{\varphi-w}(\sigma_0)-a\sigma_\infty^{a-1}\log|\varphi(+\infty)-w|-a(1-a)\int\limits_{\sigma_0}^{\sigma_\infty}t^{a-2}\mathcal{J}_{\varphi-w}(t)dt,
		\end{aligned}
		\end{equation}
		where $\mathcal{J}_{\varphi - w}$ is the Jessen function \eqref{eq:jessenfcn}.
	\end{theorem}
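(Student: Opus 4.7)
The plan is to prove \eqref{23} first, by establishing a truncated version valid at each $T>0$ and then passing to the limit, and then deduce \eqref{24} from \eqref{23} by integration by parts via the Jessen--Tornehave identity $M_{\varphi,0}(w,t)=-\mathcal{J}'_{\varphi-w}(t^+)$. Since $\varphi(s)\to\varphi(+\infty)\neq w$ uniformly as $\Re s\to+\infty$, there exists $\sigma_\infty=\sigma_\infty(\varphi,w)>0$ such that $\varphi-w$ is zero-free on $\{\Re s\geq\sigma_\infty\}$. For every zero $s$ of $\varphi-w$ with $\Re s>\sigma_0$, the layer-cake identity
\[
(\Re s)^{a}=\sigma_0^{a}+a\int\limits_{\sigma_0}^{\infty}t^{a-1}\chi_{\{t<\Re s\}}\,dt
\]
holds for every real $a$. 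Summing over the finitely many zeros of $\varphi-w$ in $\{\sigma_0<\Re s<\infty,\,|\Im s|<T\}$ and applying Fubini yields the truncated identity
\[
M_{\varphi,a}(w,\sigma_0,T)=\sigma_0^{a}\,M_{\varphi,0}(w,\sigma_0,T)+a\int\limits_{\sigma_0}^{\sigma_\infty}t^{a-1}\,M_{\varphi,0}(w,t,T)\,dt.
\]

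Next I would pass to the limit $T\to\infty$. The first term converges to $\sigma_0^{a}M_{\varphi,0}(w,\sigma_0)$ by the Jessen--Tornehave theorem. For the integral I would use dominated convergence: Jessen--Tornehave also gives pointwise convergence $M_{\varphi,0}(w,t,T)\to M_{\varphi,0}(w,t)$ for every $t>0$, and the truncated counting function is monotone decreasing in its second argument, so $M_{\varphi,0}(w,t,T)\leq M_{\varphi,0}(w,\sigma_0,T)$ for all $t\geq\sigma_0$. Convergence of the latter as $T\to\infty$ supplies a uniform-in-$T$ bound $M_{\varphi,0}(w,\sigma_0,T)\leq C=C(\varphi,w,\sigma_0)$ for $T$ sufficiently large, and the integrand is then dominated by the integrable function $C\,t^{a-1}\chi_{[\sigma_0,\sigma_\infty]}(t)$. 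This yields \eqref{23}.

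To derive \eqref{24}, I would substitute $M_{\varphi,0}(w,t)=-\mathcal{J}'_{\varphi-w}(t^+)$ into \eqref{23} and integrate by parts on $[\sigma_0,\sigma_\infty]$, noting that the integrand vanishes beyond $\sigma_\infty$. Convexity of $\mathcal{J}_{\varphi-w}$ on $(0,\infty)$, which follows either from the Hadamard three-line theorem applied to $\log|\varphi-w|$ or from the fact that its right derivative $-M_{\varphi,0}(w,\cdot)$ is nondecreasing, ensures absolute continuity on compact subintervals, so the integration by parts is justified. A routine computation then produces the displayed formula, once one observes that $\mathcal{J}_{\varphi-w}(\sigma_\infty)=\log|\varphi(+\infty)-w|$ for sufficiently large $\sigma_\infty$: in the region of absolute convergence of the Dirichlet series expansion of $\log(\varphi(s)-w)$, the Jessen function agrees with the real part of its constant term. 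The main obstacle I anticipate is the uniform-in-$T$ control of $M_{\varphi,0}(w,\sigma_0,T)$ needed for the dominated convergence step; after that, everything reduces to bookkeeping around Jessen--Tornehave and convexity.
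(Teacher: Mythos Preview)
Your argument is correct, but it proceeds by a route genuinely different from the paper's own proof of Theorem~\ref{4.3}. The paper works potential-theoretically: it realises $\sum (\Re s)^a$ over the zeros in a rectangle $R_j$ as the Riesz measure $\Delta\log|\varphi-w|$ integrated against $(\Re z)^a$, then applies Green's theorem to rewrite this as boundary integrals involving $\log|\varphi-w|$ and $\varphi'/\varphi$. After dividing by $T_j$ and passing to the limit using Lemma~\ref{4.2} and the uniform bound on the partial Jessen averages, the paper obtains \eqref{24} \emph{first}, and only then integrates by parts (via convexity of $\mathcal{J}_{\varphi-w}$) to reach \eqref{23}. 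You go the other way: a layer-cake identity plus Fubini gives the truncated version of \eqref{23} at each $T$, dominated convergence (using monotonicity in $t$ and convergence at $t=\sigma_0$) passes to the limit, and then integration by parts recovers \eqref{24}.

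Your approach is in fact the one the paper itself adopts when proving the more general Theorem~\ref{1.1}, and the authors explicitly remark that this second argument is ``strictly speaking\ldots independent of Theorem~\ref{4.3}'', offering the Green's theorem proof because they find it ``illuminating and interesting in its own right''. One small difference: for the dominated convergence step the paper (in the proof of Theorem~\ref{1.1}) invokes the quantitative bound of Lemma~\ref{4.4}, whereas you get the required uniform-in-$T$ bound more cheaply from convergence of $M_{\varphi,0}(w,\sigma_0,T)$ together with monotonicity; both are valid here since Jessen--Tornehave already supplies the existence of $M_{\varphi,0}(w,\sigma_0)$. What the Green's theorem proof buys is a direct link between the weighted counting function and the boundary behaviour of $\log|\varphi-w|$, without first reducing to the unweighted case; what your approach buys is that it is entirely elementary and makes the reduction to the known $a=0$ case transparent.
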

	\begin{proof}
		Without loss of generality we assume that $w=0$.  By almost periodicity there exists an increasing sequence $\{T_j\}_{j\geq1}$ of positive real numbers, relatively dense in $[0,+\infty)$, such that for every $\sigma\geq\sigma_0$, $$|\varphi(\sigma \pm iT_j)|\geq \delta>0.$$
		Let $\sigma_\infty>0$ be so large that that $\varphi\neq0$ in $\mathbb{C}_{\frac{\sigma_\infty}{2}}$.
		Then $\Delta\log|\varphi|=0$ near the boundary of the rectangle  $R_j$ with vertices at $\sigma_0\pm i T_j,$  $\sigma_\infty\pm i T_j$.
		
		By a  $C^\infty$ version of Urysohn's lemma \cite[Theorem 8.18]{FOL99} there exists a function $\psi\in C_c^\infty(R_j)$ such that $\psi(s) = \left(\Re s\right)^a$ for $s \in \supp(\Delta\log|\varphi|)$. Theorem \ref{4.1} implies that
		\begin{align*}
		\int\limits_{R_j}\left(\Re z\right)^{a}\Delta\log|\varphi(z)| &=\int\limits_{R_j}\psi(z)\Delta\log|\varphi(z)| \\
		&= 2\pi\sum_{\substack{s\in\varphi^{-1}(\{0\})\\s\in R_j}}\left(\Re s\right)^a = 2T_jM_{\varphi, a}(0,\sigma_0,T_j).
		\end{align*}
		On the other hand, Green's theorem implies that
		\begin{align*}
		&\int\limits_{R_j}\left(\Re z\right)^{a}\Delta\log|\varphi(z)| + a(1-a)\int\limits_{R_j}\left(\Re z\right)^{a-2}\log|\varphi(z)|dA(z)\\
		&=\ointctrclockwise\limits_{\partial R_j}\left(\Re\zeta\right)^{a}\left(-\frac{d\log|\varphi(\zeta)|}{dy},\frac{d\log|\varphi(\zeta)|}{dx}\right)\cdot d\zeta-\ointctrclockwise\limits_{\partial R_j}\log|\varphi(\zeta)|\left(-\frac{d\left(\Re\zeta\right)^{a}}{dy},\frac{\left(\Re\zeta\right)^{a}}{dx}\right)\cdot d\zeta,
		\end{align*}
		where $\zeta=x+iy$. For the first line integral on the right-hand side, we have that
		\begin{multline*}
		\ointctrclockwise\limits_{\partial R_j}\left(\Re\zeta\right)^{a}\left(-\frac{d\log|\phi(\zeta)|}{dy},\frac{d\log|\phi(\zeta)|}{dx}\right)\cdot d\zeta = \\ -\sigma_0^{a}\Re\left(\int\limits_{-T_j}^{T_j}\frac{\phi'(\sigma_0+ it)}{\phi(\sigma_0+it)}dt\right)+\sigma_\infty^{a}\Re\left(\int\limits_{-T_j}^{T_j}\frac{\phi'(\sigma_\infty+ it)}{\phi(\sigma_\infty+it)}dt\right) \pm \Re\left(\int\limits_{\sigma_0}^{\sigma_\infty}\sigma^{a}\frac{i\phi'(\sigma\pm iT_j)}{\phi(\sigma \pm iT_j)}d\sigma\right).
		\end{multline*}
		From Lemma \ref{4.2}, dividing through by $2T_j$ and letting $j\rightarrow\infty$, we obtain that
		\begin{equation*}
		\lim_{j\rightarrow\infty}\frac{1}{2T_j}\ointctrclockwise\limits_{\partial R_j}\left(\Re\zeta\right)^{a}\left(-\frac{d\log|\phi(\zeta)|}{dy},\frac{d\log|\phi(\zeta)|}{dx}\right)\cdot d\zeta= M_{\phi,0}(0,\sigma_0)\sigma_0^{a}.
		\end{equation*}
		Writing out the second line integral,
		\begin{multline*}
		\ointctrclockwise\limits_{\partial R_j}\log|\phi(\zeta)|\left(-\frac{d\left(\Re\zeta\right)^{a}}{dy},\frac{\left(\Re\zeta\right)^{a}}{dx}\right)\cdot d\zeta = \\-a\sigma_0^{a-1}\int\limits_{-T_j}^{T_j}\log|\phi(\sigma_0+it)|dt
		+a\sigma_\infty^{a-1}\int\limits_{-T_j}^{T_j}\log|\phi(\sigma_\infty+it)|dt,
		\end{multline*}
		we have that 
		$$\lim_{j\rightarrow\infty}\frac{1}{2T_j}\ointctrclockwise\limits_{\partial R_j}\log|\phi(\zeta)|\left(-\frac{d\left(\Re\zeta\right)^{a}}{dy},\frac{\left(\Re\zeta\right)^{a}}{dx}\right)\cdot d\zeta=-a\sigma_0^{a-1}J_\phi(\sigma_0)+a\sigma_\infty^{a-1}J_\phi(\sigma_\infty),$$
		where $J_\phi(\sigma_\infty)=\log|\phi(+\infty)|$ by \cite[Theorem~31]{JT45}.
		
		We apply Fubini's theorem to the area integral,
		\begin{equation*}
		\frac{1}{2T_j}\int\limits_{R_j}\left(\Re z\right)^{a-2}\log|\phi(z)|dA(z)=\int\limits_{\sigma_0}^{\sigma_\infty}\sigma^{a-2}\frac{1}{2T_j}\int\limits_{-T_j}^{T_j}\log|\phi(\sigma+it)|dtd\sigma.
		\end{equation*}
		Since the sequence of functions $\left\{\frac{1}{T_j}\int\limits_{-T_j}^{T_j}\log|\varphi(\sigma+it)|dt\right\}_{j\geq1
		}$ is uniformly bounded on $[\sigma_0,+\infty)$ by \cite[Theorem 5]{JT45}, the dominated convergence theorem implies that
		\begin{align*}
		a(1-a)\lim_{j\rightarrow\infty}\frac{1}{2T_j}\int\limits_{R_j}\left(\Re z\right)^{a-2}\log|\varphi(z)|dA(z)=a(1-a)\int\limits_{\sigma_0}^{\sigma_\infty}\sigma^{a-2}\mathcal{J}_\varphi(\sigma)d\sigma.
		\end{align*}
		We conclude that
		\begin{multline*}
		\lim_{j\rightarrow\infty}M_{\varphi,a}(0,\sigma_0,T_j) - M_{\varphi,0}(0,\sigma_0)\sigma_0^{a} = \\ a\sigma_0^{a-1}\mathcal{J}_\varphi(\sigma_0)-a\sigma_\infty^{a-1}\log|\varphi(+\infty)| -a(1-a)\int\limits_{\sigma_0}^{\sigma_\infty}\sigma^{a-2}\mathcal{J}_\varphi(\sigma)d\sigma.
		\end{multline*}
		This finishes the proof of \eqref{24}, since almost periodicity and the argument principle show that the number of zeros of $\varphi$ on any rectangle $(\sigma_0,\sigma_\infty)\times(T-d,T+d)$ is uniformly bounded, where $d=\sup\limits_{j\geq 1}(T_{j+1}-T_j)$, see for example \cite[Theorem~3]{JT45}.
		
		Finally, the Jessen function $\mathcal{J}_\varphi(\sigma)$ is convex and, as a consequence, absolutely continuous on every closed sub-interval of the positive semi-axis. Thus, we can integrate by parts, yielding that
		\begin{equation*}
		M_{\varphi,a}(0,\sigma_0)=M_{\varphi,0}(0,\sigma_0)\sigma_0^{a}+a\int\limits_{\sigma_0}^{\infty}t^{a-1}M_{\varphi,0}(0,t)dt,
		\end{equation*}
		which is  \eqref{23}.
	\end{proof}
	Before proving Theorem \ref{1.1}, we extract the following technical lemma from the work of \cite[Lemma 2.4]{BP21}.
	\begin{lemma}\label{4.4}
		Let $\varphi$ be a Dirichlet series such that $\sigma_u(\varphi)\leq0$ and $\varphi(+\infty)\neq0$. Then for every $\sigma_0>0$, and for $T>0$ sufficiently large, there exists a constant $C(\sigma_0,\varphi)>0$ such that,
		\begin{equation}\label{28}
		\frac{\pi}{T}\sum\limits_{\substack{s\in\varphi^{-1}(\{0\})\\
				|\Im s|<T\\
				\sigma<\Re s<\infty}} 1 \leq C, \qquad \sigma\geq\sigma_0.
		\end{equation}
	\end{lemma}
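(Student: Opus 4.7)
The plan is to reduce the sum in \eqref{28} to a count of zeros in a bounded rectangle, and then invoke almost periodicity to bound that count. Since the sum is nonincreasing in $\sigma$, it suffices to establish the bound at $\sigma=\sigma_0$.

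First I would show that zeros of $\varphi$ with large real part cannot occur. Because $\varphi(+\infty)\neq 0$ and the Dirichlet series converges uniformly in $\mathbb{C}_\epsilon$ for every $\epsilon>0$, there exists $\sigma_\infty>0$ such that $|\varphi(s)-\varphi(+\infty)|<|\varphi(+\infty)|/2$ throughout $\mathbb{C}_{\sigma_\infty}$. In particular $\varphi$ has no zeros there, so every zero counted in \eqref{28} automatically lies in the bounded strip $\sigma_0<\Re s<\sigma_\infty$, and it is enough to control the number of such zeros with $|\Im s|<T$.

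Next I would apply the very same almost-periodicity argument already cited at the end of the proof of Theorem~\ref{4.3}. By \cite[Theorem~3]{JT45}, one obtains a width $d>0$ and an integer $N$ such that every rectangle of the form $(\sigma_0,\sigma_\infty)\times(y-d,y+d)$, $y\in\mathbb{R}$, contains at most $N$ zeros of $\varphi$. Covering the interval $(-T,T)$ by at most $\lceil T/d\rceil+1$ sub-intervals of length $2d$, I obtain
$$\#\{s\in\varphi^{-1}(\{0\}):\sigma_0<\Re s<\sigma_\infty,\ |\Im s|<T\}\leq N(\lceil T/d\rceil+1),$$
and therefore the left-hand side of \eqref{28} is bounded by $\pi N(\lceil T/d\rceil+1)/T$, which in turn is at most $2\pi N/d$ once $T$ is sufficiently large in terms of $d$. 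This gives the desired constant $C=C(\sigma_0,\varphi)$.

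The main obstacle is the uniform local density estimate for the zeros of $\varphi$ in horizontal strips, which is precisely the content of Jessen--Tornehave's Theorem~3 and ultimately rests on the almost periodicity of $\varphi$ in $\mathbb{C}_{\sigma_0/2}$. The reduction to the bounded strip via the assumption $\varphi(+\infty)\neq 0$, and the covering argument that upgrades the local density bound into a global $O(T)$ count, are essentially routine in comparison.
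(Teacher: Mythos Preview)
Your argument is correct, and it takes a genuinely different route from the paper's proof. The paper proceeds by transferring the problem to the unit disk: it introduces the conformal map $\Theta$ from $\mathbb{D}$ onto the half-strip $S_1$, uses the Koebe quarter theorem to compare $\Re s$ with $\log|1/\Theta^{-1}(s)|$, and thereby bounds the truncated zero-count by a classical Nevanlinna counting function $N_{\psi_{2T}}(0)$ for an auxiliary self-map $\psi_{2T}$ of the disk, to which Littlewood's inequality applies. Your proof is more elementary: it confines the zeros to a bounded vertical strip via $\varphi(+\infty)\neq 0$, invokes the Jessen--Tornehave density estimate directly (the same fact the paper already cites at the end of the proof of Theorem~\ref{4.3}), and finishes with a covering argument. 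What the paper's approach buys is that the conformal map $\Theta_{\sigma,2T}$ and the Koebe-type comparison between $(\Re s)^a$ and $(1-|\Theta^{-1}|^2)^a$ are exactly the tools reused in the proof of Theorem~\ref{4.10}; Lemma~\ref{4.4} thus doubles as a warm-up for that machinery. Your argument, while cleaner for the lemma in isolation, does not set up those later estimates.
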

	\begin{proof}
		Let $\Theta$ denote the unique conformal map from the unit disk to the half-strip
		$$S_1=\{s:\Re s>0,\, |\Im s|<1\}$$
		with $\Theta(0)=1$ and $\Theta'(0)>0$. We observe that
		\begin{equation*}
		\Theta^{-1}(s)=\frac{\sinh(\frac{s\pi}{2})-\sinh(\frac{\pi}{2})}{\sinh(\frac{s\pi}{2})+\sinh(\frac{\pi}{2})},
		\end{equation*}
		and that there exists absolute constants $\delta_1$, $\delta_2>0$ such that $$\delta_1<|\left(\Theta^{-1}\right)'(s)|<\delta_2$$
		whenever $|\Im s|\leq \frac{1}{2}$ and $0\leq \Re s\leq \frac{1}{2}$. The Koebe quarter theorem \cite[Corollary 1.4]{POM92} implies that for every $s\in S_1$,
		\begin{equation*}
		\frac{1-|\Theta^{-1}(s)|^2}{4\left|\left(\Theta^{-1}\right)'(s)\right|}\leq \dist(s,\partial S_1)\leq \frac{1-|\Theta^{-1}(s)|^2}{\left|\left(\Theta^{-1}\right)'(s)\right|}.
		\end{equation*}
		Thus, there exists an absolute constant $C_0$ such that
		\begin{equation}\label{30}
		\pi \Re s\leq C_0 \log\left|\frac{1}{\Theta^{-1}(s)}\right|,
		\end{equation}
		when $|\Im s|\leq \frac{1}{2}$ and $0\leq \Re s\leq \frac{1}{2}$.
		
		For $T>0$ we will denote by $S_T$ the half-strip $TS_1$ and by $\Theta_T:\mathbb{D}\rightarrow S_T$ the map $\Theta_T=T\Theta$. We consider the function $\varphi_{\frac{\sigma_0}{2}}(s)=\frac{\varphi(s+\frac{\sigma_0}{2})}{M}$, where $M=\sup\limits_{\mathbb{C}_{\frac{\sigma_0}{2}}}|\varphi(s)|$. Then, for $T$ so large that the equation $\varphi(s)=0$ has no solutions for $\Re s>T$, we have by \eqref{30} that
		\begin{align*}
		\frac{\pi}{T}\sum\limits_{\substack{s\in\varphi^{-1}(\{0\})\\
				|\Im s|<T\\
				\sigma<\Re s<\infty}}1&\leq C\frac{\pi}{T}\sum\limits_{\substack{s\in\varphi^{-1}(\{0\})\\
				|\Im s|<T\\
				\sigma<\Re s<\infty}}\left(\Re s-\frac{\sigma_0}{2}\right)\leq C\frac{\pi}{T}\sum\limits_{\substack{s\in\varphi_{\frac{\sigma_0}{2}}^{-1}(\{0\})\\
				|\Im s|<T\\
				\Re s>0}}\Re s\\&\leq C\sum\limits_{\substack{s\in\varphi_{\frac{\sigma_0}{2}}^{-1}(\{0\})\\
				|\Im s|<2T\\
				\Re s>0}}\log\left|\frac{1}{\Theta^{-1}_{2T}(s)}\right|=C N_{\psi_{2T}}(0),
		\end{align*}
		where $\psi_{2T}=\varphi_{\frac{\sigma_0}{2}}\circ\Theta_{2T}$ and $N_{\psi_{2T}}$ is the classical Nevanlinna counting function. Thus, the Littlewood inequality \cite{SHAP87},
		$$N_{\psi_{2T}}(0) \leq \log \left| \frac{1}{\psi_{2T}(0)}\right|,$$
		implies that
		\begin{equation*}
		\frac{\pi}{T}\sum\limits_{\substack{s\in\varphi^{-1}(\{0\})\\
				|\Im s|<T\\
				\sigma<\Re s<\infty}}1\leq C \log\left|\frac{M}{\varphi(\frac{\sigma_0}{2}+2T)}\right| \qedhere
		\end{equation*}
	\end{proof}	
	We can now give the proof of Theorem~\ref{1.1}.
	\begin{proof}[\textbf{Proof of Theorem \ref{1.1}}]
		Without loss of generality we can assume that $w=0$.
		Let $\omega(\sigma)=\sigma^a.$ Then,
		\begin{align*}
		\int\limits_{\sigma_0}^{\infty}\omega'(\sigma)M_{\varphi,0}(0,\sigma)d\sigma&=\int\limits_{\sigma_0}^{\sigma_\infty}\omega'(\sigma)\lim_{T\rightarrow\infty}\frac{\pi}{T}\sum\limits_{\substack{s\in\varphi^{-1}(\{0\})\\
				|\Im s|<T\\
				\sigma<\Re s<\infty}}1 \, d\sigma,
		\end{align*}
		where $\sigma_\infty>0$ is such that the equation $\varphi(s)=0$ has no solutions in $\mathbb{C}_{\sigma_\infty}$.
		By the dominated convergence theorem, which applies in light of \eqref{28}, and then Fubini's theorem, we obtain that
		\begin{align*}
		\int\limits_{\sigma_0}^{\infty}\omega'(\sigma)M_{\varphi,0}(0,\sigma) \, d\sigma&=\lim_{T\rightarrow\infty}\frac{\pi}{T}\int\limits_{\sigma_0}^{\infty}\omega'(\sigma)\sum\limits_{\substack{s\in\varphi^{-1}(\{0\})\\
				|\Im s|<T\\
				\sigma<\Re s<\infty}}1 \,d\sigma
		=\lim_{T\rightarrow\infty}\frac{\pi}{T}\sum\limits_{\substack{s\in\varphi^{-1}(\{0\})\\
				|\Im s|<T\\
				\sigma_0<\Re s<\infty}}\int\limits_{\sigma_0}^{\Re s}\omega'(\sigma) \,d\sigma\\
		&=M_{\varphi,a}(0,\sigma_0)-\sigma_0^aM_{\varphi,0}(0,\sigma_0).
		\end{align*}
		This proves the existence of the function $M_{\varphi,a}(0,\sigma_0)$ and \eqref{3}. The right continuity of $M_{\varphi, a}(0, \sigma)$ is now a consequence of the right continuity of $M_{\varphi, 0}(0, \sigma)$, see \cite[Lemma~5.1]{BP21}. Integrating by parts as in the proof of Theorem \ref{4.3}, we also obtain \eqref{eq:meanintbyparts}.
	\end{proof}
	Strictly speaking, this argument is independent of Theorem \ref{4.3}. However, we find the proof of Theorem~\ref{4.3} to be illuminating and interesting in its own right. Note that the proof for Theorem~\ref{1.1} can also be applied to the more general counting function induced by a twice continuously differentiable weight $\omega(s)=\omega(\Re s)$ on $(0, \infty)$,
	$$M_{\varphi,\omega}(w, \sigma, T)=\frac{\pi}{T}\sum\limits_{\substack{s\in\varphi^{-1}(\{w\})\\
			|\Im s|<T\\
			\sigma<\Re s<\infty}}\omega(s), \qquad w\neq\varphi(+\infty).$$
	
	By monotonicity we deduce the following.
	\begin{corollary}
		Let $\varphi$ be a Dirichlet series such that $\sigma_u(\varphi)\leq 0$ and $\varphi(+\infty)\neq w$. Then, for every  $a\in\mathbb{R}$ the counting function $M_{\varphi,a}(w)=\lim\limits_{\sigma\rightarrow0^+}M_{\varphi,a}(w,\sigma)$ exists, finitely or infinitely.
	\end{corollary}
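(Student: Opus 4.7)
The plan is to observe that the map $\sigma \mapsto M_{\varphi,a}(w,\sigma)$ is non-increasing on $(0,\infty)$, and therefore admits a limit in $[0,\infty]$ as $\sigma\to 0^+$. Theorem \ref{1.1} has already secured the existence of $M_{\varphi,a}(w,\sigma)$ for every $\sigma>0$, so the only task is to establish the monotonicity.

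To this end, I would work directly with the pre-limit expression
\begin{equation*}
M_{\varphi,a}(w,\sigma,T)=\frac{\pi}{T}\sum\limits_{\substack{s\in\varphi^{-1}(\{w\})\\|\Im s|<T\\ \sigma<\Re s<\infty}}(\Re s)^{a}.
\end{equation*}
The crucial point is that for any $a\in\mathbb{R}$, each summand $(\Re s)^{a}$ is strictly positive, since $\Re s>\sigma>0$. Hence, for any $0<\sigma_1\leq\sigma_2$ and any fixed $T>0$, the set of zeros indexing the sum for $\sigma_1$ contains that for $\sigma_2$, with all additional contributions non-negative, so
\begin{equation*}
M_{\varphi,a}(w,\sigma_1,T)\;\geq\;M_{\varphi,a}(w,\sigma_2,T).
\end{equation*}
Since Theorem \ref{1.1} guarantees that both sides converge as $T\to\infty$, passing to the limit yields $M_{\varphi,a}(w,\sigma_1)\geq M_{\varphi,a}(w,\sigma_2)$, i.e.\ monotonicity.

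A non-increasing function on $(0,\infty)$ automatically has a (finite or infinite) limit at $0^+$, equal to its supremum, which establishes the existence of $M_{\varphi,a}(w)=\lim_{\sigma\to 0^+}M_{\varphi,a}(w,\sigma)$ in $[0,\infty]$. There is no real obstacle in this argument beyond invoking Theorem \ref{1.1}; the content is essentially the observation that the weight $(\Re s)^{a}$ remains positive for every real exponent $a$, which keeps the monotone structure of the unweighted counting function intact.
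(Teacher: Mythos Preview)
Your argument is correct and is exactly the approach the paper takes: the corollary is stated immediately after Theorem~\ref{1.1} with only the one-line justification ``By monotonicity we deduce the following,'' and you have simply spelled out that monotonicity in detail.
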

	The limit is not finite in general for $a \geq 0$, as we now exemplify.
	\begin{example}\label{4.7}
		Applying the transference principle \cite{QS15} to the example constructed by Zorboska in \cite{ZOR98}, we obtain a Dirichlet series $\varphi$ such that the $a$-weighted counting function is finite if and only if $a>\frac{1}{2}$. More precisely, we consider the Dirichlet series $\varphi(s)=g(2^{-s}),$ where $g(z)=e^{-\frac{1+z}{1-z}},\,z\in\mathbb{D}$. We observe that $\varphi$ is a periodic function (with period $ip=\frac{2\pi i}{\log(2)}$) and abscissa of uniform convergence $\sigma_u(\varphi)\leq0$.
		
		Let $w\in g(\mathbb{D})\setminus\{g(0)\}=\varphi(\mathbb{C}_0)\setminus\{\varphi(+\infty)\}$. The periodicity implies that 
		\begin{equation}\label{32}
		\left[\frac{2T}{p}\right]\sum\limits_{\substack{s\in\varphi^{-1}(\{w\})\\
				0\leq \Im s<p\\
				\sigma<\Re s<\infty}}\left(\Re s\right)^a\leq \sum\limits_{\substack{s\in\varphi^{-1}(\{w\})\\
				|\Im s|<T\\
				\sigma<\Re s<\infty}}\left(\Re s\right)^a\leq \left(\left[\frac{2T}{p}\right]+1\right)\sum\limits_{\substack{s\in\varphi^{-1}(\{w\})\\
				0\leq \Im s<p\\
				\sigma<\Re s<\infty}}\left(\Re s\right)^a,
		\end{equation}
		where $[x]$ is the integer part of the real number $x$.
		Note that
		\begin{align*}
		\left(\log 2\right)^{a}\sum\limits_{\substack{s\in\varphi^{-1}(\{w\})\\
				|\Im s|<p\\
				\Re s>0}}\left(\Re s\right)^a&=\sum\limits_{\substack{z\in g^{-1}(\{w\})\\
				|z|<1}}\left(\log\frac{1}{|z|}\right)^{a}.
		\end{align*}
		Writing $w= e^{-b} e^{i\theta},$ where $b>0$ and $\theta\in[0,2\pi)$, so that
		\begin{equation*}
		g^{-1}(\{w\})=\left\{z_n=\frac{1-b+i(\theta +2\pi n)}{i(\theta +2\pi n)-b-1}:n\in\mathbb{Z}\right\}.
		\end{equation*}
		We thus have
		\begin{align*}
		\sum\limits_{\substack{s\in\varphi^{-1}(\{w\})\\
				0\leq \Im s<p\\
				\Re s>0}}\left(\Re s\right)^a& \approx \sum_{n\in\mathbb{Z}}\left(1-|z_n|^2\right)^{a}\\
		&= \sum_{n\in\mathbb{Z}}\left(\frac{4b}{(1+b)^2+(\theta+2n\pi)^2}\right)^{a}.
		\end{align*}
		This shows that $M_{\varphi,a}(w)=\infty$ for all $w\in\mathbb{D}$ and $a\leq\frac{1}{2}$.
	\end{example}
\subsection{Weighted mean counting functions as integrals}
The purpose of this subsection is to replace the limiting processes in the definition of $M_{\varphi, a}$ with integration.  For $a \geq 1$, this allows us to show that it is almost always possible to directly take $\sigma = 0$ in the definition of the weighted mean counting function.
\begin{lemma}\label{invar}
Let $a\in\mathbb{R}$ and let $\varphi$ be a Dirichlet series such that $\sigma_u(\varphi)\leq 0$ and $\varphi(+\infty)\neq w$. Then, for every $\sigma>0$, the weighted mean counting function is invariant under vertical limits, that is,
$$M_{\varphi,a}(w,\sigma)=M_{\varphi_\chi,a}(w,\sigma),\qquad \chi\in\mathbb{T}^\infty.$$
\end{lemma}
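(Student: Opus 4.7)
The plan is to reduce the problem to the unweighted case $a=0$ via the integral formula of Theorem~\ref{1.1}, and then to establish the invariance of $M_{\cdot,0}(w,\sigma)$ using Lemma~\ref{4.2} in combination with Bohr's theory of uniformly almost periodic functions.

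The reduction is immediate: applying \eqref{3} to both $\varphi$ and $\varphi_\chi$ (which has $\sigma_u(\varphi_\chi)\leq 0$ and $\varphi_\chi(+\infty)=\varphi(+\infty)\neq w$) gives
$$M_{\varphi,a}(w,\sigma)=M_{\varphi,0}(w,\sigma)\sigma^{a}+a\int\limits_{\sigma}^{\infty}t^{a-1}M_{\varphi,0}(w,t)\,dt$$
and the analogous identity for $\varphi_\chi$. Thus it suffices to prove $M_{\varphi,0}(w,t)=M_{\varphi_\chi,0}(w,t)$ for every $t>0$.

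To do so, I would fix $\sigma_0>0$ such that $\{\Re s=\sigma_0\}$ is zero-free for both $\varphi-w$ and $\varphi_\chi-w$. Such $\sigma_0$ form a dense subset of $(0,\infty)$, since the zero sets are discrete. Applying Lemma~\ref{4.2} to the Dirichlet series $\varphi-w$ and $\varphi_\chi-w$, and noting that the logarithmic derivatives
$$h(t)=\frac{\varphi'(\sigma_0+it)}{\varphi(\sigma_0+it)-w}, \qquad h_\chi(t)=\frac{\varphi'_\chi(\sigma_0+it)}{\varphi_\chi(\sigma_0+it)-w}$$
are bounded and uniformly almost periodic on the respective zero-free lines (the denominators being bounded away from zero by almost periodicity), the sequential limits in Lemma~\ref{4.2} coincide with the full Bohr means: $M_{\varphi,0}(w,\sigma_0)=-M(h)$ and $M_{\varphi_\chi,0}(w,\sigma_0)=-M(h_\chi)$. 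By Kronecker's theorem, there exists a sequence $t_j\to\infty$ with $\chi_{t_j}\to\chi$ in $\mathbb{T}^\infty$, so that $\varphi(\cdot+it_j)\to\varphi_\chi$ and $\varphi'(\cdot+it_j)\to\varphi'_\chi$ uniformly on compact subsets of $\mathbb{C}_\epsilon$ for every $\epsilon>0$; the translates $h(\cdot+t_j)$ therefore converge to $h_\chi$ uniformly on compacta of $\mathbb{R}$. Translation invariance gives $M(h)=M(h(\cdot+t_j))$, and Bohr's theory guarantees that the mean is preserved by such vertical limits, so $M(h)=M(h_\chi)$, whence $M_{\varphi,0}(w,\sigma_0)=M_{\varphi_\chi,0}(w,\sigma_0)$.

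The identity extends from the dense set of zero-free $\sigma_0$ to every $\sigma>0$ by the right-continuity of $M_{\cdot,0}(w,\cdot)$ provided by Theorem~\ref{1.1}. The main technical point is the preservation of the Bohr mean under the passage $h(\cdot+t_j)\to h_\chi$; this is not a routine continuity statement, since the mean is taken over all of $\mathbb{R}$ rather than a compact interval, but it follows from the classical fact that two Bohr-equivalent uniformly almost periodic functions share the same mean, which is proved by approximating $h$ uniformly by trigonometric polynomials whose means are visibly translation- and Bohr-invariant before passing to the limit.
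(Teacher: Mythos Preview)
Your reduction to the unweighted case $a=0$ via formula~\eqref{3} of Theorem~\ref{1.1} is correct and is exactly what the paper does. For $a=0$, however, the paper does not go through Lemma~\ref{4.2}; it quotes the invariance of the Jessen function, $\mathcal{J}_{\varphi-w}(\sigma)=\mathcal{J}_{\varphi_\chi-w}(\sigma)$ (see \cite[Satz~A]{JES33} or \cite[Lemma~4.1]{BP21}), and then reads off
\[
M_{\varphi,0}(w,\sigma)=-\mathcal{J}'_{\varphi-w}(\sigma^+)=-\mathcal{J}'_{\varphi_\chi-w}(\sigma^+)=M_{\varphi_\chi,0}(w,\sigma).
\]

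Your alternative route through the logarithmic derivative has a genuine gap at the clause ``the denominators being bounded away from zero by almost periodicity''. A uniformly almost periodic function can be zero-free on a vertical line while having infimum zero there, and this does occur for Dirichlet series. Take $\varphi(s)=1+2^{-s}+3^{-s}+5^{-s}$ and $w=0$. For every $\sigma_0$ with $2^{-\sigma_0}+3^{-\sigma_0}+5^{-\sigma_0}\geq 1$ (an interval $(0,\sigma^{**}]$ with $\sigma^{**}>1$) one can choose $\chi\in\mathbb{T}^\infty$ so that $\chi(2)2^{-\sigma_0}+\chi(3)3^{-\sigma_0}+\chi(5)5^{-\sigma_0}=-1$, hence $\varphi_\chi(\sigma_0)=0$; by Kronecker density this forces $\inf_t|\varphi(\sigma_0+it)|=0$ throughout that interval. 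Thus $h(t)=\varphi'(\sigma_0+it)/\varphi(\sigma_0+it)$ is unbounded and not uniformly almost periodic for any $\sigma_0\in(0,\sigma^{**}]$, so the sequential limit in Lemma~\ref{4.2} cannot be upgraded to a Bohr mean, and the invariance-of-the-mean step collapses on that whole range of $\sigma$. There is no dense supply of ``good'' lines to fall back on. This is precisely why the paper works with $\log|\varphi-w|$ rather than $\varphi'/(\varphi-w)$: the Jessen integrand remains integrable and its mean well-defined even when $|\varphi-w|$ is not bounded away from zero.
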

\begin{proof}
The statement holds for the Jessen function, see \cite[Satz A]{JES33} or \cite[Lemma~4.1]{BP21}, 
\begin{equation*}
\mathcal{J}_{\varphi-w}(\sigma)=\mathcal{J}_{\varphi_\chi-w}(\sigma), \qquad \chi\in\mathbb{T}^\infty.
\end{equation*}
Thus, for the unweighted counting function, we have that $$M_{\varphi_\chi,0}(w,\sigma)=-\mathcal{J}'_{\varphi_\chi-w}(\sigma^+)=-\mathcal{J}'_{\varphi-w}(\sigma^+)=M_{\varphi,0}(w,\sigma).$$
By Theorem \ref{1.1} it follows that every weighted mean counting function $M_{\varphi_\chi,a}(w,\sigma)$ is invariant under vertical limits.
\end{proof}
Of course we may let $\sigma\rightarrow0^+$ to obtain that $M_{\varphi,a}(w)=M_{\varphi_\chi,a}(w)$ for every $\chi \in \mathbb{T}^\infty$. 
\begin{theorem}\label{average}
Let $\varphi$ be a Dirichlet series such that $\sigma_u(\varphi)\leq 0$ and $\varphi(+\infty)\neq w$. Then, for every $a\in\mathbb{R}$ the weighted mean counting function can be written as \begin{equation}\label{aver}
M_{\varphi,a}(w)=\int\limits_{\mathbb{T}^\infty}M_{\varphi_\chi,a}(w,0,1) \, dm_\infty(\chi).
\end{equation}
\end{theorem}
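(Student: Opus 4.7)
The plan is to first establish the intermediate identity
\begin{equation*}
\int_{\mathbb{T}^\infty} M_{\varphi_\chi,a}(w,\sigma,1) \, dm_\infty(\chi) = M_{\varphi,a}(w,\sigma), \qquad \sigma > 0,
\end{equation*}
and then pass to the limit $\sigma \to 0^+$. This last step will be immediate from monotone convergence: since $(\Re s)^a > 0$, the quantity $M_{\varphi_\chi, a}(w, \sigma, 1)$ increases monotonically to $M_{\varphi_\chi, a}(w, 0, 1)$ as $\sigma \searrow 0$, while the right-hand side above converges by definition to $M_{\varphi, a}(w)$.

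The idea for proving the intermediate identity is to perform a time-average over vertical translations. Setting $\eta_t = (p_j^{-it})_{j \geq 1} \in \mathbb{T}^\infty$, so that $\varphi_{\chi \eta_t}(s) = \varphi_\chi(s + it)$, Fubini applied to the non-negative defining sum yields
\begin{equation*}
\int_{-T}^T M_{\varphi_{\chi \eta_t},a}(w,\sigma,1) \, dt = \pi \sum_{\substack{\zeta \in \varphi_\chi^{-1}(\{w\}) \\ \Re \zeta > \sigma}} (\Re \zeta)^a \, \bigl|[-T,T] \cap (\Im \zeta - 1, \Im \zeta + 1)\bigr|.
\end{equation*}
The overlap has length exactly $2$ when $|\Im \zeta| < T - 1$ and at most $2$ otherwise, so splitting the sum accordingly, dividing by $2T$, and invoking Lemma~\ref{invar} should give the pointwise convergence
\begin{equation*}
\frac{1}{2T}\int_{-T}^T M_{\varphi_{\chi \eta_t},a}(w,\sigma,1) \, dt \longrightarrow M_{\varphi_\chi,a}(w,\sigma) = M_{\varphi,a}(w,\sigma), \qquad T \to \infty.
\end{equation*}
The contribution from the boundary strip $T-1 \leq |\Im \zeta| \leq T + 1$ will vanish by telescoping against the (already established) finite limit $\lim_T M_{\varphi_\chi, a}(w, \sigma, T) = M_{\varphi_\chi, a}(w, \sigma)$.

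To conclude I would integrate this in $\chi$. Combining Fubini with the invariance of $m_\infty$ under $\chi \mapsto \chi \eta_t$ gives
\begin{equation*}
\int_{\mathbb{T}^\infty} \frac{1}{2T}\int_{-T}^T M_{\varphi_{\chi \eta_t},a}(w,\sigma,1) \, dt \, dm_\infty(\chi) = \int_{\mathbb{T}^\infty} M_{\varphi_\chi,a}(w,\sigma,1) \, dm_\infty(\chi)
\end{equation*}
for every $T > 0$, so the intermediate identity follows by interchanging the $T$-limit with the $\chi$-integral on the left. The hard part is justifying this interchange. I would apply dominated convergence, relying on a uniform-in-$\chi$ version of Lemma~\ref{4.4}; inspection of its proof shows that the constant $C(\sigma_0,\varphi)$ can be taken independently of $\chi$, because Kronecker's theorem yields $\sup_{\mathbb{C}_{\sigma_0/2}}|\varphi_\chi| = \sup_{\mathbb{C}_{\sigma_0/2}}|\varphi|$ and $\varphi_\chi(+\infty) = \varphi(+\infty)$. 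This uniform domination suffices to complete the proof.
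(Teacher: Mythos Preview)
Your proof is correct and shares the same overall skeleton as the paper's: establish the intermediate identity $\int_{\mathbb{T}^\infty} M_{\varphi_\chi,a}(w,\sigma,1)\,dm_\infty(\chi) = M_{\varphi,a}(w,\sigma)$ for $\sigma>0$ via the overlap/Fubini computation you describe, then pass to $\sigma\to 0^+$ by monotone convergence. The computation of the time average, the use of Lemma~\ref{invar}, and the final MCT step all match the paper exactly.

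Where you genuinely diverge is in linking the space integral over $\mathbb{T}^\infty$ to the time average. The paper invokes the Birkhoff--Khinchin ergodic theorem to obtain, for almost every $\chi'$, that the space average equals $\lim_{T\to\infty}\frac{1}{2T}\int_{-T}^T M_{\varphi_{n^{-it}\chi'},a}(w,\sigma,1)\,dt$, and then computes this limit. You instead use translation invariance of Haar measure under $\chi\mapsto\chi\eta_t$ to show that the space integral equals its own time average for every $T$, and then pass the $T$-limit through the $\chi$-integral by dominated convergence. Your route is more elementary in that it avoids the ergodic theorem entirely; the price is the extra DCT step, but your justification is sound (indeed, since $M_{\varphi_{\chi\eta_t},a}(w,\sigma,1)$ is uniformly bounded in $\chi$ and $t$ by almost periodicity and Hurwitz, the time averages are trivially dominated by the same constant, which is even simpler than the uniform version of Lemma~\ref{4.4} you propose). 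Your treatment of the boundary strip $T-1\le|\Im\zeta|\le T+1$ via telescoping is also slightly more careful than the paper's, which silently replaces the overlap $|[-T,T]\cap(\Im s-1,\Im s+1)|$ by $2$.
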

\begin{proof}
For fixed $\sigma>0$, almost periodicity and Hurwitz's theorem imply that $M_{\varphi_\chi,a}(w,\sigma,1)$ is uniformly bounded in $\chi \in \mathbb{T}^\infty$, cf. \cite[Theorem~3]{JT45}. Thus, we can apply the Birkhoff--Khinchin theorem: for almost every character $\chi' \in \mathbb{T}^\infty$, it holds that 
\begin{multline*}
\int\limits_{\mathbb{T}^\infty}M_{\varphi_\chi,a}(w,\sigma,1) \, dm_\infty(\chi)=\lim\limits_{T\rightarrow\infty}\frac{1}{2T}\int\limits_{-T}^TM_{\varphi_{n^{-it}\chi'},a}(w,\sigma,1) \, dt\\
=\lim\limits_{T\rightarrow\infty}\frac{\pi}{2T}\int\limits_{-T}^T\sum\limits_{\substack{s\in\varphi_{n^{-it}\chi'}^{-1}(\{w\})\\
		|\Im s|<1\\
	\Re s>\sigma}}\left(\Re s\right)^{a} \, dt
=\lim\limits_{T\rightarrow\infty}\frac{\pi}{2T}\int\limits_{-T}^T\sum\limits_{\substack{s\in\varphi_{\chi'}^{-1}(\{w\})\\
			-1+t<\Im s<1+t\\
			\Re s>\sigma}}\left(\Re s\right)^{a}dt.
\end{multline*}
Interchanging the order of integration and summation yields that 
\begin{equation*}
\int\limits_{\mathbb{T}^\infty}M_{\varphi_\chi,a}(w,\sigma,1) \, dm_\infty(\chi)=\lim\limits_{T\rightarrow\infty}\frac{\pi}{2T}\sum\limits_{\substack{s\in\varphi_{\chi'}^{-1}(\{w\})\\
		|\Im s|<1+T\\
		\Re s>\sigma}}\left(\Re s\right)^{a}\int\limits_{\Im s-1}^{\Im s+1}dt
=M_{\varphi_{\chi'},a}(w,\sigma).
\end{equation*}
Applying Lemma \ref{invar} and then letting $\sigma\rightarrow0^+$ with the monotone convergence theorem, we obtain \eqref{aver}.
\end{proof}
From this argument we are also able to give a partial solution to \cite[Problem~1]{BP21}.
\begin{theorem}
Let $\varphi$ be a Dirichlet series such that $\sigma_u(\varphi)\leq 0$ and $\varphi(+\infty)\neq w$. Then, for $a\geq 1$ and almost every $\chi\in\mathbb{T}^\infty$,
\begin{equation}\label{prob1}
M_{\varphi,a}(w)=\lim_{T\rightarrow \infty}\frac{\pi}{T}\sum\limits_{\substack{s\in\varphi_\chi^{-1}(\{w\})\\
		|\Im s|<T\\
		\Re s>0}}\left(\Re s\right)^a.
\end{equation}
\end{theorem}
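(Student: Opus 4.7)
The plan is to apply Birkhoff's ergodic theorem in the spirit of the proof of Theorem~\ref{average}, now directly at $\sigma = 0$. Set $F(\chi) := M_{\varphi_\chi, a}(w, 0, 1)$, which by Theorem~\ref{average} belongs to $L^1(\mathbb{T}^\infty)$ with $\int F \, dm_\infty = M_{\varphi, a}(w)$. We may assume this is finite; otherwise the monotonicity bound $\liminf_{T \to \infty} \frac{\pi}{T}\sum_{|\Im s|<T,\,\Re s>0}(\Re s)^a \geq M_{\varphi,a}(w,\sigma) \uparrow M_{\varphi, a}(w) = +\infty$ already yields \eqref{prob1}. Since the flow $\Phi_t\chi := n^{-it}\chi$ is $m_\infty$-ergodic by Kronecker's theorem, Birkhoff yields, for a.e.\ $\chi$,
\[
\lim_{T \to \infty} \frac{1}{2T} \int_{-T}^{T} F(\Phi_t \chi) \, dt = M_{\varphi, a}(w).
\]

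Following the Fubini computation of Theorem~\ref{average}, the left-hand side splits as $\frac{\pi}{T} \sum_{|\Im s|<T-1,\,\Re s>0}(\Re s)^a$ plus a non-negative boundary term supported on the two strips $T - 1 \leq |\Im s| \leq T + 1$. The difference between this ``bulk'' sum and the target $f_T(\chi) := \frac{\pi}{T}\sum_{|\Im s|<T,\,\Re s>0}(\Re s)^a$ is likewise a boundary term of the same kind, so \eqref{prob1} reduces to showing that the weighted zero-mass in a window of fixed width near $|\Im s| = T$ is $o(T)$ almost surely.

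To achieve this, I would introduce the random measure
\[
\mu_\chi := \sum_{s \in \varphi_\chi^{-1}(\{w\}),\, \Re s > 0}(\Re s)^a \, \delta_{\Im s}
\]
on $\mathbb{R}$. The relation $\varphi_{\Phi_t \chi}(s) = \varphi_\chi(s+it)$ gives the covariance $\mu_{\Phi_t \chi}(A) = \mu_\chi(A+t)$, so $\mu$ is a stationary ergodic random measure with intensity $\lambda = M_{\varphi,a}(w)/(2\pi)$. The desired assertion $\mu_\chi((-T,T))/T \to 2\lambda$ a.s.\textemdash equivalent to \eqref{prob1} upon multiplication by $\pi$\textemdash then follows from the pointwise SLLN for such measures: applying the discrete Birkhoff theorem with the time-one map $\Phi_1$ to $G(\chi) := \mu_\chi([0,1))$ gives $\mu_\chi([0, N])/N \to \lambda$ a.s.\ along integer $N$, and the classical implication ``$S_N/N \to L$ implies $(S_N - S_{N-1})/N \to 0$'' shows that $\mu_\chi([N, N+1])/N \to 0$ a.s., which by a squeeze extends to continuous $T$ and, after a symmetric treatment of the negative imaginary direction, yields $f_T(\chi) \to M_{\varphi,a}(w)$ a.s.

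The main obstacle is verifying the ergodicity of the discrete map $\Phi_1$ on $(\mathbb{T}^\infty, m_\infty)$: a non-trivial $\Phi_1$-invariant character $\chi \mapsto \prod \chi_k^{m_k}$ would require $\prod p_k^{m_k} = e^{2\pi r}$ for some $r \in \mathbb{Z}$, which is ruled out first by the transcendence of $e^{2\pi}$ (Gelfond--Schneider, forcing $r = 0$) and then by the $\mathbb{Q}$-linear independence of $\{\log p_k\}$ (forcing all $m_k = 0$). The restriction $a \geq 1$ in the hypothesis serves principally to make finiteness of the intensity $\lambda$ plausible; for $a \leq 1/2$ the intensity can already be infinite (Example~\ref{4.7}), in which case \eqref{prob1} holds vacuously.
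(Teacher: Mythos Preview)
Your argument is correct, but it diverges from the paper's proof in an interesting way. The paper does not work with $L^1$ information alone: it first invokes \cite[Lemma~2.4]{BP21} (together with Lemma~\ref{3.4} when $a>1$) to obtain the uniform bound $\chi\mapsto M_{\varphi_\chi,a}(w,0,T_0)\in L^\infty(\mathbb{T}^\infty)$ for large $T_0$. With that bound in hand, the boundary contribution from the strips $|\Im s|\in[T-1,T+1]$ is automatically $O(1/T)$ \emph{uniformly} in $\chi$, so a single application of the continuous Birkhoff--Khinchin theorem, exactly as in Theorem~\ref{average}, finishes the proof in two lines. This is where the hypothesis $a\geq 1$ is actually consumed: it is needed for the $L^\infty$ estimate, not merely for the finiteness of the intensity $\lambda$, as you suggest in your closing paragraph.

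Your route, by contrast, uses only the $L^1$ information coming from Theorem~\ref{average} and then has to work harder for the boundary term: you pass to the time-one map $\Phi_1$, establish its ergodicity via the transcendence of $e^{\pi}$ (your Gelfond--Schneider step, which is correct since $e^{2\pi}=(e^\pi)^2$ is transcendental because $e^\pi$ is), apply the discrete Birkhoff theorem to $G(\chi)=\mu_\chi([0,1))$, and squeeze. All of this is sound. The payoff is that your argument in fact proves a stronger statement: \eqref{prob1} holds for \emph{every} $a\in\mathbb{R}$ for which $M_{\varphi,a}(w)<\infty$, not only for $a\geq 1$. The cost is a considerably longer proof and an appeal to number-theoretic transcendence that the paper avoids entirely.
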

\begin{proof}
By \cite[Lemma~2.4]{BP21}, applied in conjunction with Lemma \ref{3.4} for $a > 1$,  we have that $M_{\varphi_\chi,a}(w,0,T_0)\in L^\infty(\mathbb{T}^\infty)$ for all sufficiently large $T_0 > 0$.  Applying the Birkhoff--Khinchin theorem as in the proof of Theorem~\ref{average}, it holds for almost every $\chi'\in\mathbb{T}^\infty$ that
\begin{equation*}
 \int\limits_{\mathbb{T}^\infty}M_{\varphi_\chi,a}(w,0,T_0) \, dm_\infty(\chi)=\lim\limits_{T\rightarrow\infty}\frac{\pi}{T}\sum\limits_{\substack{s\in\varphi_{\chi'}^{-1}(\{w\})\\
		|\Im s|<T\\
		\Re s>0}}\left(\Re s\right)^a.
\end{equation*}
However, exactly as in Theorem \ref{average}, we also have that 
\begin{equation*}
	M_{\varphi_{\chi'}, a}(w) = M_{\varphi, a}(w) =  \int\limits_{\mathbb{T}^\infty}M_{\varphi_\chi,a}(w,0,T_0) \, dm_\infty(\chi). \qedhere
	\end{equation*}
\end{proof} 
	\subsection{The submean value property}
	Let $\Omega$ be an open subset of $\mathbb{C}$. We say that a function $u:\Omega\rightarrow[-\infty,\infty)$ satisfies the submean value property if for every disk $\overline{D(w,r)}\subset\Omega$
	\begin{equation*}
	u(w)\leq \frac{1}{|D(w,r)|}\int\limits_{D(w,r)}u(z)dA(z),
	\end{equation*}
	where $|D(w,r)| = \pi r^2$ is the area of the disk.
	
	Shapiro \cite[Section 4]{SHAP87} proved that for every holomorphic self-map of the unit disk $\phi$, the Nevanlinna counting function $N_\phi$ satisfies the submean value property in $\mathbb{D}\setminus\{\phi(0)\}$.
	Kellay and Lefevre \cite[Lemma 2.3]{KL12} proved that for $\alpha\in(0,1)$, the generalized Nevanlinna counting function  $N_{\phi,\alpha}$ satisfies the submean value property in $\mathbb{D}\setminus\{\phi(0)\}$. In fact, this result follows directly from the submean value property of the classical Nevanlinna counting function and the following formula due to Aleman.
	\begin{theorem}[\cite{AL92}]\label{4.8}
		Let $0<\alpha<1$ and $\phi:\mathbb{D}\rightarrow\mathbb{D}$ be holomorphic and non-constant. Then
		\begin{equation*}
		N_{\phi,\alpha}(w)=-\frac{1}{2}\int\limits_{\mathbb{D}}\Delta \omega_a(z)N_{\phi\circ \tau_z}(w) \, dA(z), \qquad w\in\mathbb{D},
		\end{equation*}
		where $\omega_a(z)=\left(1-|z|^2\right)^\alpha$ and $\tau_z(w)=\frac{z-w}{1-\overline{z}w}$.
	\end{theorem}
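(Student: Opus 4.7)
The plan is to reduce Aleman's identity to Green's representation formula on the disk. The first step is to rewrite $N_{\phi\circ\tau_z}(w)$ in a symmetric form. Since $\tau_z$ is an involutive automorphism of $\mathbb{D}$, the preimages of $w$ under $\phi\circ\tau_z$ are exactly $\{\tau_z(\zeta) : \zeta\in\phi^{-1}(w)\}$, and hence
\[
N_{\phi\circ\tau_z}(w) = \sum_{\zeta\in\phi^{-1}(w)} \log\frac{1}{|\tau_z(\zeta)|} = \sum_{\zeta\in\phi^{-1}(w)} \log\frac{1}{|\tau_\zeta(z)|},
\]
the second equality being the well-known symmetry $|\tau_z(\zeta)| = |\tau_\zeta(z)|$ of the pseudohyperbolic distance.

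A direct computation gives $\Delta\omega_a(z) = 4\alpha(\alpha|z|^2 - 1)(1-|z|^2)^{\alpha-2}$, which is non-positive on $\mathbb{D}$ since $\alpha < 1$. Combined with the non-negativity of $\log(1/|\tau_\zeta(z)|)$, Tonelli's theorem justifies interchanging the sum and the integral on the right-hand side of Aleman's formula. The statement therefore reduces to the pointwise identity
\[
(1-|\zeta|^2)^\alpha = -\frac{1}{2}\int_{\mathbb{D}} \Delta\omega_a(z)\log\frac{1}{|\tau_\zeta(z)|}\,dA(z), \qquad \zeta\in\mathbb{D}.
\]
This is an instance of Green's representation formula: $G(z,\zeta) = \log(1/|\tau_\zeta(z)|)$ is (up to normalization of $dA$) the Green's function for $-\Delta$ on $\mathbb{D}$ with Dirichlet boundary condition, and $\omega_a$ continuously vanishes on $\partial\mathbb{D}$, so that if $\omega_a$ were in $C^2(\overline{\mathbb{D}})$ the identity would follow at once.

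The main obstacle is precisely that $\omega_a$ lies only in $C^\infty(\mathbb{D})\cap C(\overline{\mathbb{D}})$: the function $\Delta\omega_a$ blows up like $(1-|z|^2)^{\alpha-2}$ and fails to be in $L^1(\mathbb{D})$, so Green's identity cannot be applied naively. Crucially, however, $\log(1/|\tau_\zeta(z)|) = O(1-|z|^2)$ as $|z|\to 1^-$, so the product $\Delta\omega_a(z)\log(1/|\tau_\zeta(z)|)$ behaves like $(1-|z|^2)^{\alpha-1}$ near the boundary and is integrable for $\alpha>0$. I would therefore approximate by replacing $\mathbb{D}$ with the subdisk $\mathbb{D}_\epsilon = \{|z|<1-\epsilon\}$, apply the classical Green's identity on $\mathbb{D}_\epsilon$ to obtain a volume term plus two boundary integrals (involving $\omega_a\partial_\nu G$ and $G\,\partial_\nu\omega_a$), and let $\epsilon\to 0^+$. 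The first boundary integral vanishes because $\omega_a\to 0$ uniformly on $\partial\mathbb{D}_\epsilon$, while the second does so because $G(z,\zeta) = O(1-|z|^2)$ and $|\partial_\nu \omega_a| = O((1-|z|^2)^{\alpha-1})$ on $\partial\mathbb{D}_\epsilon$, giving $G\,\partial_\nu\omega_a = O(\epsilon^\alpha)$. Passing to the limit and invoking dominated convergence (with the integrable majorant noted above) yields the identity and completes the proof.
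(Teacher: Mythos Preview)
The paper does not supply a proof of this statement; Theorem~4.8 is quoted from Aleman's thesis \cite{AL92} and used only to remark that the submean value property of $N_{\phi,\alpha}$ follows from it together with the submean value property of the classical Nevanlinna counting function. There is therefore no ``paper's own proof'' to compare against.

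Your argument is correct and is essentially the standard route to Aleman's formula. The reduction via the involution property of $\tau_z$ and the symmetry $|\tau_z(\zeta)|=|\tau_\zeta(z)|$ is right, Tonelli is justified because $-\Delta\omega_\alpha\ge 0$ on $\mathbb{D}$ (your computation $\Delta\omega_\alpha(z)=4\alpha(\alpha|z|^2-1)(1-|z|^2)^{\alpha-2}$ is correct), and the pointwise identity is indeed Green's representation with the disk Green's function $G(z,\zeta)=\log(1/|\tau_\zeta(z)|)$. Your boundary analysis is accurate: $G(z,\zeta)=O(1-|z|)$ while $|\partial_\nu\omega_\alpha|=O((1-|z|)^{\alpha-1})$, so both boundary terms are $O(\epsilon^\alpha)$ and vanish as $\epsilon\to 0^+$, and the product $G\,\Delta\omega_\alpha$ is $O((1-|z|)^{\alpha-1})\in L^1(\mathbb{D})$, which supplies the dominated convergence majorant. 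Two small remarks: first, when applying Green's identity on $\mathbb{D}_\epsilon$ you should in principle also excise a small disk around the interior singularity $z=\zeta$ of $G$; this is the standard step that produces the point evaluation $\omega_\alpha(\zeta)$, and you have implicitly folded it into ``Green's representation''. Second, the constant $-\tfrac{1}{2}$ in the stated formula corresponds to the \emph{normalized} area measure $dA/\pi$ (so that $\int_{\mathbb{D}}G(\cdot,\zeta)\,\Delta u\,dA=-2u(\zeta)$); with the paper's convention $dA(z)=dx\,dy$ the constant would be $-\tfrac{1}{2\pi}$. You flagged this with your parenthetical ``up to normalization of $dA$'', which is the right caveat.
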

	In the Hardy space case the mean counting function $M_{\varphi,1}$  satisfies the submean value property \cite[Lemma 6.5]{BP21}, for every Dirichlet series $\varphi$ that belongs to the Nevanlinna class. For periodic symbols $\varphi(s)=g(2^{-s})$, where $g$ is a holomorphic self-map of the unit disk, we also know that $M_{\varphi,a}(w)$ satisfies the submean value property for all $a\in(0,1)$, by an application of Theorem~\ref{4.8}.
	
	This subsection is devoted to proving the following.
	\begin{theorem}\label{4.10}
		Let $\varphi$ be a Dirichlet series with $\sigma_u(\varphi)\leq0$. Then, for every positive $a > 0$, there exists a constant $C=C(a)>0$ such that
		\begin{equation}\label{34}
		M_{\varphi,a}(w)\leq \frac{C}{|D(w,r)|}\int\limits_{D(w,r)}M_{\varphi,a}(z) \, dA(z),
		\end{equation}
		for every disk $D(w,r)$ that does not contain $\varphi(+\infty)$.
	\end{theorem}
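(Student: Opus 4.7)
The plan is to reduce to the well-understood unit disk setting by combining the polytorus integral representation from Theorem~\ref{average} with a conformal transfer to $\mathbb{D}$. Since
\[ M_{\varphi, a}(w) = \int_{\mathbb{T}^\infty} M_{\varphi_\chi, a}(w, 0, 1) \, dm_\infty(\chi) \]
and the integrand is nonnegative, Fubini's theorem reduces the task to establishing a weak submean value property for $w \mapsto M_{\varphi_\chi, a}(w, 0, 1)$ with a constant that is uniform in $\chi \in \mathbb{T}^\infty$.

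For a fixed character, I would then transfer the counting function on the half-strip $S_1 = \{\Re s > 0, |\Im s| < 1\}$ to the unit disk via the conformal map $\Theta\colon \mathbb{D} \to S_1$ used in Lemma~\ref{4.4}. Writing $\psi_\chi = \varphi_\chi \circ \Theta$ yields
\[ M_{\varphi_\chi, a}(w, 0, 1) = \pi \sum_{z \in \psi_\chi^{-1}(\{w\})} (\Re \Theta(z))^a, \]
and the Koebe distortion estimate \eqref{30} gives $(\Re \Theta(z))^a \approx (\log(1/|z|))^a$ on the portion of $\mathbb{D}$ corresponding to small $\Re s$, while the weight is uniformly bounded on compact subsets of $\mathbb{D}$. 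After normalizing $\varphi_\chi$ by a translation in $s$ followed by division by a supremum bound (as in the proof of Lemma~\ref{4.4}) so that $\psi_\chi$ becomes a self-map of $\mathbb{D}$, the right-hand side differs from a generalized Nevanlinna counting function $N_{\psi_\chi, a}(w)$ only by a bounded perturbation.

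For $0 < a < 1$, Aleman's identity (Theorem~\ref{4.8}) expresses $N_{\psi_\chi, a}$ as a \emph{positive} integral of classical Nevanlinna counting functions $N_{\psi_\chi \circ \tau_z}$, each of which satisfies Shapiro's submean value property on $\mathbb{D} \setminus \{\psi_\chi(\tau_z(0))\}$. Integrating the resulting submean value inequality through the Aleman representation and then through the polytorus integral gives the weak submean value property for $M_{\varphi, a}$ with constant depending only on $a$. For $a \geq 1$ Aleman's positivity breaks down, and this is the principal obstacle. My fallback strategy is to use the formula
\[ M_{\varphi, a}(w) = a \int_0^\infty t^{a-1} M_{\varphi, 0}(w, t) \, dt \]
coming from Theorem~\ref{1.1}, and to leverage a uniform (in $t$) submean value property for $M_{\varphi, 0}(\cdot, t)$, bootstrapped from the strong submean value of $M_{\varphi, 1}$ in \cite[Lemma~6.5]{BP21}. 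Keeping the bounded perturbations from the conformal transfer and the normalization under control, while ensuring that the final constant is independent of $\chi$, requires some care, but should fit inside the constant $C(a)$ permitted by the statement.
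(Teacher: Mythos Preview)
Your proposal contains two gaps, one technical and one structural.

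For $0 < a \leq 1$, the conformal transfer via $\Theta\colon \mathbb{D} \to S_1$ does not give the comparison $(\Re \Theta(z))^a \approx (\log(1/|z|))^a$ that you need. A zero $s$ of $\varphi_\chi - w$ with $|\Im s|$ close to $1$ maps to a point $z = \Theta^{-1}(s)$ close to $\partial \mathbb{D}$, so $(1-|z|^2)^a$ is small while $(\Re s)^a$ need not be; the ``bounded perturbation'' is therefore unbounded and the required inequality $M_{\varphi_\chi, a}(w, 0, 1) \leq C N_{\psi_\chi, a}(w)$ fails. The paper resolves this with a doubling device at the truncated level: it sandwiches $M_{\varphi, a}(\cdot, 2\sigma, T)$ and $M_{\varphi, a}(\cdot, \sigma, 2T)$ around $T^{a-1}N_{\varphi \circ \Theta_{\sigma, 2T}, a}$, using a half-strip of height $2T$ so that all zeros under consideration (with $|\Im s| < T$ and $\Re s < T$, the latter by choice of $T$) lie in the Koebe region where the comparison constants are absolute. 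The submean value property of $N_{\varphi \circ \Theta_{\sigma, 2T}, a}$ (Lemma~\ref{4.11}, and Shapiro's result for $a=1$) then yields the inequality at finite $(\sigma, T)$, after which one lets $T \to \infty$ and $\sigma \to 0^+$; the polytorus integral is never invoked.

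For $a > 1$, your fallback hinges on a weak submean value property for $M_{\varphi, 0}(\cdot, t)$, to be bootstrapped from that of $M_{\varphi, 1}$. This cannot work: $M_{\varphi, 0}(w, t)$ is essentially the negative $t$-derivative of $M_{\varphi, 1}(w, t)$, and differentiation does not preserve subharmonicity; indeed, the example immediately following the statement of Theorem~\ref{4.10} shows that no inequality of the form \eqref{34} with a universal constant can hold when $a = 0$. The paper instead expresses $M_{\varphi, a}(w, \sigma, T)$ directly as a positive $t$-integral of the case $a = 1$ via
\[
(a-1)\int_\sigma^\infty t^{a-2} M_{\varphi, 1}(w, t, T)\, dt = M_{\varphi, a}(w, \sigma, T) - \sigma^{a-1} M_{\varphi, 1}(w, \sigma, T),
\]
and applies the already-established case $a = 1$ under the integral sign.
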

	
	For $a=0$ the (unweighted) counting function does not satisfy \eqref{34}, as can be seen from the following example.
	\begin{example}
		Let $\varphi_\rho(s)=g_\rho(2^{-s})$, where $g_\rho$ is a Riemann map from the unit disk onto the domain
		$$\Omega_\rho=D(0,r)\bigcup \left\{x+iy:x\in[0,2r),\, y\in(-r/\rho, r/\rho)\right\},$$
		where $r<\frac{1}{3}$ is fixed. Then for $w=\frac{119 r}{60}$, we have that
		$$ \lim_{\rho \to \infty} \frac{1}{N_{g_\rho,0}(w)}  \int\limits_{D(w, 1-2r)}N_{g_\rho,0}(z) \, dA(z) = 0,$$
		and thus by \eqref{32}, that
		$$ \lim_{\rho \to \infty} \frac{1}{M_{\varphi_\rho,0}(w)}  \int\limits_{D(w, 1-2r)} M_{\varphi_\rho,0}(z) \, dA(z) = 0.$$
		Therefore Theorem~\ref{4.10} could not be true for $a = 0$.
	\end{example}
	
	First we need the following lemma.
	\begin{lemma}\label{4.11}
		Suppose that $\Omega$ is a bounded subdomain of $\mathbb{C}$ and let $\phi:\mathbb{D}\rightarrow\Omega$ be holomorphic. Then the generalized Nevanlinna counting function $N_{\phi,\alpha}(w)$ satisfies the submean value property for $0<\alpha<1$.
	\end{lemma}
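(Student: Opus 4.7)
The plan is to pull the claim back to the self-map setting, in which Theorem~\ref{4.8} applies directly, and then to combine Aleman's integral formula with the submean value property of the classical Nevanlinna counting function. Since $\Omega$ is bounded, fix $R>0$ with $\Omega\subset D(0,R)$, so that $\Phi:=\phi/R$ is a holomorphic self-map of $\mathbb{D}$. Because the preimage sets $\Phi^{-1}(\{w/R\})$ and $\phi^{-1}(\{w\})$ literally coincide, we have
\[
N_{\phi,\alpha}(w)=N_{\Phi,\alpha}(w/R)\quad\text{and}\quad N_{\phi\circ\tau_z}(w)=N_{\Phi\circ\tau_z}(w/R),\qquad z\in\mathbb{D},\ w\in\mathbb{C}.
\]
Consequently, the submean value property for $N_{\phi,\alpha}$ on an averaging disk $D(w_0,r)\subset\Omega$ reduces, via the affine change of variables $w\mapsto w/R$, to the corresponding property for $N_{\Phi,\alpha}$ on $D(w_0/R,r/R)\subset\mathbb{D}$.

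Applying Theorem~\ref{4.8} to $\Phi$ and translating back produces
\[
N_{\phi,\alpha}(w)=-\frac{1}{2}\int_{\mathbb{D}}\Delta\omega_\alpha(z)\,N_{\phi\circ\tau_z}(w)\,dA(z).
\]
A short polar-coordinate computation yields
\[
\Delta\omega_\alpha(z)=-4\alpha(1-|z|^2)^{\alpha-2}(1-\alpha|z|^2),
\]
which is strictly negative on $\mathbb{D}$ when $0<\alpha<1$. Hence $d\mu(z):=-\frac{1}{2}\Delta\omega_\alpha(z)\,dA(z)$ is a positive measure, and the formula exhibits $N_{\phi,\alpha}$ as a $\mu$-average of classical Nevanlinna counting functions $N_{\phi\circ\tau_z}$. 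For each $z\in\mathbb{D}$, the composition $\phi\circ\tau_z\colon\mathbb{D}\to\Omega$ is holomorphic with $(\phi\circ\tau_z)(0)=\phi(z)$, and applying the same rescaling trick a second time transfers Shapiro's classical submean value property \cite{SHAP87} to the statement that $N_{\phi\circ\tau_z}$ is subharmonic on $\Omega\setminus\{\phi(z)\}$. Integrating the pointwise submean inequality $N_{\phi\circ\tau_z}(w_0)\le \frac{1}{|D(w_0,r)|}\int_{D(w_0,r)}N_{\phi\circ\tau_z}(\xi)\,dA(\xi)$ against $d\mu$ and interchanging via Fubini's theorem then delivers the desired submean value property for $N_{\phi,\alpha}$.

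The main obstacle, already present in Kellay--Lef\`evre's original argument in the self-map case, is that the classical submean inequality for $N_{\phi\circ\tau_z}$ on $\overline{D(w_0,r)}$ requires the singular point $(\phi\circ\tau_z)(0)=\phi(z)$ to lie outside that disk, yet this point varies with $z$ and can lie inside $\overline{D(w_0,r)}$ for a set of $z$'s of positive $\mu$-measure. Resolving this is the technical crux: one exploits the fact that the singularity of $N_{\phi\circ\tau_z}$ at $\phi(z)$ is only logarithmic and therefore locally integrable against both $dA$ and $d\mu$, so that a punctured-disk/limiting argument extends the pointwise submean inequality across the pole. Once this point is settled, the rescaling from the first paragraph transports the final inequality from $\Phi$ back to $\phi$ and completes the proof.
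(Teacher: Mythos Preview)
Your first paragraph already \emph{is} the paper's proof: rescale by $R$ so that $\Phi=\phi/R$ is a self-map of $\mathbb{D}$, observe $N_{\phi,\alpha}(w)=N_{\Phi,\alpha}(w/R)$, and transfer the submean inequality for $N_{\Phi,\alpha}$ on $D(w_0/R,r/R)$ back to $N_{\phi,\alpha}$ on $D(w_0,r)$ by the affine change of variables. At that point the paper simply invokes the Kellay--Lef\`evre result (stated in the text immediately preceding the lemma) for the self-map $\Phi$ and is finished in three lines.

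Everything from your second paragraph onward is a re-derivation of the Kellay--Lef\`evre result itself via Aleman's formula (Theorem~\ref{4.8}) and Shapiro's submean property for the classical counting function. This is a legitimate route, and the paper even remarks that the Kellay--Lef\`evre result follows in exactly this way, but it is not needed for the lemma at hand. Moreover, taking this route forces you to confront the moving-pole issue you flag---that $\phi(z)$ may land in the averaging disk for a positive-$\mu$-measure set of $z$---and your resolution of it is only sketched. The paper's approach sidesteps this complication entirely by treating the self-map case as a black box.
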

	\begin{proof}
		Let $R>0$ be such that $\overline{\Omega}\subset D(0,R)$. Then, the function $\Phi=\frac{\phi}{R}$ is a holomorphic self-map of the unit disk. We observe that for every $w\in\Omega\setminus\{\phi(0)\}$,
		\begin{equation*}
		N_{\phi,\alpha}(w)=N_{\Phi,\alpha}\left(\frac{w}{R}\right).
		\end{equation*}
		Let $\overline{D(w,r)}\subset\Omega\setminus\{\phi(0)\}$. Then, by the submean value property of $N_{\Phi, \alpha}$, we have that
		\begin{equation*}
		N_{\phi,\alpha}(w) \leq \frac{R^2}{|D(w,r)|}\int\limits_{D\left(\frac{w}{R},\frac{r}{R}\right)}N_{\Phi,\alpha}(z) \, dA(z) =\frac{1}{|D(w,r)|}\int\limits_{D(w,r)}N_{\phi,\alpha}(z) \, dA(z). \qedhere
		\end{equation*}
	\end{proof}
	
	\begin{proof}[\textbf{Proof of Theorem \ref{4.10}}]
		First we consider the case $a\in (0,1]$. In the notation of the proof of Lemma \ref{4.4}, let $\Theta_{\sigma, 2T}(z)= \Theta_{2T}(z)+\sigma =2T\Theta(z)+\sigma$ be the Riemann map from the unit disk onto the half-strip 
		$$S_{\sigma, 2T}=\left\{z: \Re z>\sigma,\, |\Im z|<2T \right\},$$
		with $\Theta_{\sigma, 2T}(0)=2T+\sigma$ and $\Theta_{\sigma, 2T}'(0)>0$. We observe that 
		\begin{equation*}
		\Theta_{\sigma, 2T}^{-1}(s)=\Theta_{2T}^{-1}(s-\sigma)=\Theta^{-1}\left(\frac{s-\sigma}{2T}\right).
		\end{equation*}
		By the Koebe quarter theorem, working as in Lemma \ref{4.4},
		\begin{align*}
		1-\left|\Theta_{\sigma, 2T}^{-1}(s)\right|^2&\approx \frac{\Re s-\sigma}{2T},
		\end{align*}
		whenever  $\sigma<\Re s<T$ and $|\Im s|<T$.
		
		For $T>0$ so large that $\varphi(s)\not\in D(w,r)$ for all $\Re s\geq T$, we have that
		\begin{align*}
		\frac{\pi}{T^{a}}\sum\limits_{\substack{s\in\varphi^{-1}(\{z\})\\
				|\Im s|<T\\
				2\sigma<\Re s<\infty}}\left(\Re s\right)^a&\leq 2^{a}\frac{\pi}{T^{a}}\sum\limits_{\substack{s\in\varphi^{-1}(\{z\})\\
				|\Im s|<T\\
				\sigma<\Re s<T}}\left(\Re s-\sigma\right)^{a}\\
		&\leq C \sum\limits_{\substack{s\in\varphi^{-1}(\{z\})\\
				|\Im s|<T\\
				\Re s>\sigma}}\left(1-\left|\Theta_{\sigma, 2T}^{-1}(s)\right|^2\right)^{a} \leq C N_{\varphi\circ\Theta_{\sigma, 2T},a}(z),
		\end{align*}
		for all $z \in D(w,r)$. 
		
		Conversely, again by the Koebe quarter theorem, there exists an absolute constant $C>0$ such that
		$$1-\left|\Theta^{-1}(s)\right|^2\leq C \Re s,$$
		for $0<\Re s<\frac{1}{2}$ and $|\Im s|<1$. That is,
		\begin{align*}
		1-\left|\Theta_{\sigma, 2T}^{-1}(s)\right|^2&\leq C \frac{\Re s-\sigma}{2T},
		\end{align*}
		whenever $\sigma<\Re s<T$ and $|\Im s|<2T$. Thus, for $z\in D(w,r)$
		\begin{equation*}
		N_{\varphi\circ\Theta_{\sigma, 2T},a}(z)\leq C \sum\limits_{\substack{s\in\varphi^{-1}(\{z\})\\
				|\Im s|<2T\\
				\Re s>\sigma}}\left(1-\left|\Theta_{\sigma, 2T}^{-1}(s)\right|^2\right)^{a} \leq C\frac{\pi}{T^{a}}\sum\limits_{\substack{s\in\varphi^{-1}(\{z\})\\
				|\Im s|<2T\\
				\Re s>\sigma}}\left(\Re s\right)^{a}.
		\end{equation*}
	
		In summary, we have shown that for all sufficiently large $T>0$ and $z \in D(w, r)$,
		\begin{equation*}
			M_{\varphi,a}(z,2\sigma,T)\leq C_1 T^{a-1}N_{\varphi\circ\Theta_{\sigma, 2T},a}(z)\leq C_2M_{\varphi,a}(z,\sigma,2T).
		\end{equation*}
		Since  $N_{\varphi\circ\Theta_{\sigma, 2T},a}$ satisfies the submean value property by Lemma \ref{4.11}, we conclude that 
		\begin{equation*}
		M_{\varphi,a}(w,2\sigma,T)\leq  \frac{C}{|D(w,r)|}\int\limits_{D(w,r)}M_{\varphi,a}(z,\sigma,2T)dA(z).
		\end{equation*}
		By Theorem~\ref{1.1} and \eqref{28} we can apply the dominated convergence theorem to let $T\rightarrow \infty$, and then let $\sigma\rightarrow0^+$ with the monotone convergence theorem, to obtain the desired property,
		$$M_{\varphi,a}(w)\leq  \frac{C}{|D(w,r)|}\int\limits_{D(w,r)}M_{\varphi,a}(z)dA(z),$$
		for a constant $C>0$ that depends only on $a\in(0,1]$.
		
		We assume now that  $a > 1$. By Tonelli's theorem we have, for every $T>0,\,\sigma>0$, and $w\neq\varphi(+\infty)$, that
		\begin{align*}
		(a-1)\int\limits_{\sigma}^{\infty}t^{a-2}M_{\varphi,1}(w,t, T) \, dt
		&=(a-1)\frac{\pi}{T}\sum\limits_{\substack{s\in\varphi^{-1}(\{w\}) \\
				|\Im s|<T\\
				\sigma<\Re s<\infty}}\Re s\int\limits_{\sigma}^{\Re s}t^{a-2}dt \\ &=M_{\varphi,a}(w,\sigma, T)-\sigma^{a-1}M_{\varphi,1}(w,\sigma, T).
		\end{align*}
		Applying the submean value property for $a = 1$, we thus find that 
		\begin{equation*}
		M_{\varphi,a }(w,2\sigma, T) \leq  \frac{C}{|D(w,r)|}\int\limits_{D(w,r)}M_{\varphi,a}(z,\sigma, 2T) \, dA(z).
		\end{equation*}
		This concludes the proof, by letting $T\rightarrow\infty$ and then $\sigma\rightarrow0^+$ in the same way as before.
	\end{proof}

	\section{Composition Operators}
	\subsection{Reproducing kernels}
	To obtain necessary conditions for a composition operator to be compact, we will make use of reproducing kernels. 
	The reproducing kernel $k_{s,a}$ of $\mathcal{D}_a$ at a point $s\in\mathbb{C}_{\frac{1}{2}}$ is given by the equation
	\begin{equation*}
	k_{s,a}(w)=1+\sum\limits_{n\geq2} \frac{1}{\left(\log(n)\right)^a} \frac{1}{n^{\overline{s}+w}}, \qquad w \in \mathbb{C}_{1/2}.
	\end{equation*}
	For fixed $a<1$, we have that
	\begin{equation*}
	\norm{k_{s,a}}^2_{a}=1+\sum\limits_{n\geq2}\frac{1}{n^{2\Re s}\left(\log(n)\right)^a}\approx \frac{1}{\left(2\Re s-1\right)^{1-a}},
	\end{equation*}
	as $\Re s\rightarrow \frac{1}{2}^+.$ We will also require slightly more detailed information about the behavior of the reproducing kernel, cf. \cite[Lemma~3.1]{OS08}.
	\begin{lemma}\label{5.1}
		Let $a \leq 1$ and $J_a(w)=\sum\limits_{n\geq1}\frac{\left(\log(n)\right)^{1-a}}{n^w}$ for $\Re w>1$. Then there exists a holomorphic function $E_a$ on $\mathbb{C}_0$ such that 
		\begin{equation*}
		J_a(w)=\frac{\Gamma(2-a)}{\left(w-1\right)^{2-a}}+ E_a(w), \qquad w\in\mathbb{C}_1.
		\end{equation*}
	\end{lemma}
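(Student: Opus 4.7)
The plan is to write $J_a(w)$ as an integral approximation plus a sum-integral difference. The integral will supply the singular part $\Gamma(2-a)/(w-1)^{2-a}$, while the difference will be shown to extend holomorphically to $\mathbb{C}_0$ via Euler--Maclaurin summation.

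First, I would identify the singular term with an integral. Starting from the Mellin representation $\Gamma(2-a)/(w-1)^{2-a} = \int_0^\infty u^{1-a} e^{-(w-1)u} \, du$ (valid for $\Re w > 1$ since $2 - a > 0$), the substitution $u = \log x$ gives
$$\frac{\Gamma(2-a)}{(w-1)^{2-a}} = \int_1^\infty (\log x)^{1-a} x^{-w} \, dx, \qquad \Re w > 1.$$

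Next, I would apply the Euler--Maclaurin summation formula to $f(x) := (\log x)^{1-a} x^{-w}$ on $[1, N]$ and let $N \to \infty$. For $\Re w > 1$, all relevant sums and integrals converge, yielding
$$J_a(w) = \int_1^\infty f(x) \, dx + \frac{f(1)}{2} + \int_1^\infty \bigl(\{x\} - \tfrac{1}{2}\bigr) f'(x) \, dx.$$
Here $f(1)$ is the constant $0$ if $a < 1$ and $1$ if $a = 1$. Setting $E_a(w)$ equal to the sum of the last two terms gives the desired decomposition on $\mathbb{C}_1$.

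Finally, I would verify that $E_a$ extends holomorphically to $\mathbb{C}_0$. A direct computation yields
$$f'(x) = \bigl[(1-a)(\log x)^{-a} - w(\log x)^{1-a}\bigr] x^{-w-1},$$
so on any compact $K \subset \mathbb{C}_0$ the integrand $\bigl(\{x\} - \tfrac{1}{2}\bigr) f'(x)$ is dominated by $C_K \bigl[(\log x)^{-a} + (\log x)^{1-a}\bigr] x^{-\varepsilon - 1}$, where $\varepsilon := \min_{w \in K} \Re w > 0$. Holomorphicity of $E_a$ then follows from a standard differentiation-under-the-integral argument.

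The main (mild) obstacle is integrability of this dominating function near $x = 1$ when $0 < a < 1$, since then $(\log x)^{-a} \sim (x-1)^{-a}$ produces a singularity. This is where the hypothesis $a \leq 1$ is used: for $0 < a < 1$ the singularity is integrable precisely because $a < 1$, at $a = 1$ the factor $(1-a)$ kills the offending term, and for $a \leq 0$ the factor $(\log x)^{-a}$ is bounded near $x = 1$.
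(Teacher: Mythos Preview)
Your proof is correct. Both your argument and the paper's compare $J_a(w)$ with the integral $\int_1^\infty (\log x)^{1-a} x^{-w}\,dx = \Gamma(2-a)/(w-1)^{2-a}$, but they organize the comparison differently. The paper applies Abel summation with the summatory function $A(x)=\sum_{n\le x}(\log n)^{1-a}$ to write $J_a(w)=w\int_1^\infty A(x)x^{-w-1}\,dx$, and then estimates $g_a(x)=A(x)-\int_1^x(\log t)^{1-a}\,dt$ by $2(\log(x+1))^{1-a}$; the remainder $E_a(w)=w\int_1^\infty g_a(x)x^{-w-1}\,dx$ then converges on $\mathbb{C}_0$ with no singularity issue near $x=1$. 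Your Euler--Maclaurin route is a bit more direct and gives an explicit error term involving $f'$, at the cost of having to check integrability of $(\log x)^{-a}$ near $x=1$ (which, as you note, is exactly where the hypothesis $a\le 1$ enters). Either approach is standard; the paper's has a slightly cleaner bound on the remainder, while yours avoids introducing the auxiliary summatory function.
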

	\begin{proof}
		We consider the summatory function
		$$A(x)=\sum_{n\leq x}\left(\log(n)\right)^{1-a}, \qquad x \geq 1.$$
		Summation by parts yields that
		\begin{equation}
		J_a(w)=w\sum_{n\geq 1}A(n)\int\limits_n^{n+1}x^{-w-1}dx=w\int\limits_1^\infty A(x)x^{-w-1}dx.\label{37}
		\end{equation}
		We observe that
		\begin{equation*}
		A(n)\leq \int\limits_1^{n+1}\left(\log(t)\right)^{1-a}dt\leq A(n+1).
		\end{equation*}
		Thus, with $g_a(x) := A(x)-\int\limits_1^{x}\left(\log(t)\right)^{1-a}dt$, we have that
		\begin{align*}
		\left|g_a(x)\right|&\leq \left|A(\lfloor x \rfloor)-\int\limits_1^{\lfloor x \rfloor}\left(\log(t)\right)^{1-a}dt\right|+\int\limits_{\lfloor x \rfloor}^{x}\left(\log(t)\right)^{1-a}dt\nonumber\\
		&\leq 2\left(\log(x+1)\right)^{1-a}.
		\end{align*}
		Therefore the function $E_a(w):=w\int\limits_1^\infty g_a(x)x^{-w-1}dx$ is holomorphic on $\mathbb{C}_0$, so that \eqref{37} can be written in the following form,
		\begin{equation*}
		J_a(w)=w\int\limits_1^\infty \int\limits_1^{x}\left(\log(t)\right)^{1-a}x^{-w-1}dtdx+E_a(w).
		\end{equation*}
		We can compute the integral by a change of variables,
		\begin{align*}
		w\int\limits_1^\infty \int\limits_1^{x}\left(\log(t)\right)^{1-a}x^{-w-1}dtdx&=w\int\limits_0^\infty e^{-wr}\int\limits_0^r u^{1-a}e^ududr\\
		&=\int\limits_0^\infty e^{-(w-1)u}u^{1-a}du  =\frac{1}{\left(w-1\right)^{2-a}}\oint\limits_{\Lambda_w}z^{1-a}e^{-z}dz,
		\end{align*}
		where $\Lambda_w=\{t(w-1) \,: \, t\geq 0\}$. Applying Cauchy's theorem to shift the path of integration to the positive real axis, we see that
		\begin{equation*}
		w\int\limits_1^\infty \int\limits_1^{x}\left(\log(t)\right)^{1-a}x^{-w-1}dtdx=\frac{\Gamma(2-a)}{\left(w-1\right)^{2-a}}. \qedhere
		\end{equation*}
	\end{proof}
	\subsection{The Stanton formula}
	The proof of the analogue of the Stanton formula for the weighted spaces $\mathcal{D}_a$, $a \leq 1$, relies on the work of \cite{BP21} and a generalized version of the dominated convergence theorem.
	\begin{proof}[\textbf{Proof of Theorem \ref{1.2}}]
		As explained on \cite[p. 10]{BP21}, it is a consequence of Bohr's theorem that for any  $f\in\mathcal{D}_a$, the abscissa of uniform convergence satisfies $\sigma_u(f\circ\varphi)\leq 0$. By making a non-injective change of variables in the Littlewood--Paley formula \eqref{1}, we thus have that
		$$\norm{C_\varphi(f)}_a^2=|f(\varphi(+\infty))|^2+\frac{2^{1-a}}{\Gamma(2-a)\pi}\lim\limits_{\sigma_0\rightarrow0^+}\lim\limits_{T\rightarrow+\infty}\int\limits_{\mathbb{C}_{\frac{1}{2}}}|f'(w)|^2M_{\varphi,1-a}(w,\sigma_0,T)dA(w).$$
		We extract the following equation from the proof of  \cite[Theorem 1.3]{BP21}, 
		\begin{multline*}
		\lim_{T\rightarrow+\infty}\int\limits_{\mathbb{C}_{\frac{1}{2}}}|f'(w)|^2\left(M_{\varphi,1}(w,\sigma_0,T)-M_{\varphi,1}(w,\sigma_1,T)\right)dA(w)\\ =\int\limits_{\mathbb{C}_{\frac{1}{2}}}|f'(w)|^2\left(M_{\varphi,1}(w,\sigma_0)-M_{\varphi,1}(w,\sigma_1)\right)dA(w).
		\end{multline*}
		Since both $M_{\varphi, 1 - a}(w, \sigma, T)$ and $M_{\varphi, 1}(w, \sigma, T)$ converge pointwise for $\sigma > 0$, and additionally, since for $\sigma_0<\Re s<\sigma_1$ there is a constant $C > 0$ such that $\left(\Re s\right)^{1-a}\leq C\Re s$, the generalized dominated convergence theorem \cite[Sec. 2, Ex. 20]{FOL99} thus yields that
		\begin{multline*}
		\lim_{T\rightarrow+\infty}\frac{\pi}{T}\int\limits_{-T}^T\int\limits_{\sigma_0}^{\sigma_1}|(f\circ\varphi(\sigma+it))'|^2\sigma^{1-a}d\sigma dt\\ =\int\limits_{\mathbb{C}_{\frac{1}{2}}}|f'(w)|^2\left(M_{\varphi,1-a}(w,\sigma_0)-M_{\varphi,1-a}(w,\sigma_1)\right)dA(w).
		\end{multline*}
		By the monotone convergence theorem, letting $\sigma_0\rightarrow0^+$ and then $\sigma_1\rightarrow+\infty$, we conclude that
		\begin{equation*}
		\norm{C_\varphi(f)}_a^2=|f(\varphi(+\infty))|^2+\frac{2^{1-a}}{\Gamma(2-a)\pi}\int\limits_{\mathbb{C}_{\frac{1}{2}}}|f'(w)|^2M_{\varphi,1-a}(w)dA(w). \qedhere
		\end{equation*}
	\end{proof}
	When $a \leq 0$, Theorem~\ref{1.2}, the boundedness of the composition operator $C_\varphi \colon \mathcal{D}_a \to \mathcal{D}_a$, and Theorem \ref{4.10} allow us to deduce that the weighted counting function is finite.
	\begin{corollary}
		Suppose that $\varphi \in \mathfrak{G}_0$ and that $a \leq 0$. Then $M_{\varphi,1-a}(w)$ is finite for every $w\neq\varphi(+\infty)$.
	\end{corollary}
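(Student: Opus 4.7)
The plan is a proof by contradiction combining the three indicated ingredients. Suppose, towards a contradiction, that $M_{\varphi, 1-a}(w_0) = +\infty$ for some $w_0 \in \mathbb{C}_{\frac{1}{2}} \setminus \{\varphi(+\infty)\}$. Since $a \leq 0$, we have $1 - a \geq 1 > 0$, so the submean value property of Theorem~\ref{4.10} applies to $M_{\varphi, 1-a}$. Choosing $r > 0$ small enough that the closed disk $\overline{D(w_0, r)}$ is contained in the open set $\mathbb{C}_{\frac{1}{2}} \setminus \{\varphi(+\infty)\}$, \eqref{34} then forces
$$\int_{D(w_0, r)} M_{\varphi, 1-a}(z) \, dA(z) = +\infty.$$

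Next I would apply the Stanton formula \eqref{4} to a convenient test function whose derivative is bounded below on $\overline{D(w_0, r)}$. A natural choice is $f(s) = 2^{-s}$, which lies in $\mathcal{D}_a$ for every $a \leq 1$ and whose derivative $f'(s) = -(\log 2)\, 2^{-s}$ does not vanish anywhere in $\mathbb{C}_{\frac{1}{2}}$; hence there exists a constant $c > 0$ such that $|f'(w)| \geq c$ for all $w \in \overline{D(w_0, r)}$. By Theorem~\ref{3.7}, the operator $C_\varphi$ is bounded on $\mathcal{D}_a$, so $\norm{C_\varphi f}_a < \infty$. Restricting the integral in Theorem~\ref{1.2} to $D(w_0, r)$, however, yields
$$\norm{C_\varphi f}_a^2 \geq \frac{2^{1-a}}{\Gamma(2-a)\pi}\, c^2 \int_{D(w_0, r)} M_{\varphi, 1-a}(w) \, dA(w) = +\infty,$$
which is the desired contradiction.

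There is essentially no obstacle in this argument; the only mild subtleties are verifying that $r$ can be chosen so that $\overline{D(w_0, r)}$ simultaneously avoids $\varphi(+\infty)$ and stays inside $\mathbb{C}_{\frac{1}{2}}$ (immediate from $w_0 \neq \varphi(+\infty)$ and $w_0 \in \mathbb{C}_{\frac{1}{2}}$), and producing a single test function in $\mathcal{D}_a$ with nonvanishing derivative on a compact subset of $\mathbb{C}_{\frac{1}{2}}$, for which $f(s) = 2^{-s}$ suffices. Any Dirichlet polynomial without a critical point in $\mathbb{C}_{\frac{1}{2}}$ would serve the same purpose.
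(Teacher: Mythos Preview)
Your proof is correct and follows essentially the same approach as the paper, which simply states that Theorem~\ref{1.2}, the boundedness of $C_\varphi$ on $\mathcal{D}_a$, and Theorem~\ref{4.10} together yield the result. You have filled in precisely those details: submean value property forces the integral over a small disk to be infinite, and the Stanton formula applied to a test function with nonvanishing derivative then contradicts boundedness.
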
 
	
	\subsection{ Compact composition operators on Bergman spaces of Dirichlet series}
	To prove the sufficiency part of characterization we will make use of the following Littlewood-type inequality from \cite[Theorem~1.1]{BP21}.
	\begin{theorem}[\cite{BP21}]\label{5.2}
		For every $\varphi\in\mathfrak{G}_0$ the mean counting function $M_{\varphi,1}$ satisfies the estimate
		\begin{equation*}
		M_{\varphi,1}(w)\leq\log\left|\frac{\overline{w}+\varphi(+\infty)-1}{w-\varphi(+\infty)}\right|,
		\end{equation*}
		where $w\in\mathbb{C}_{\frac{1}{2}}\setminus\{\varphi(+\infty)\}$.
	\end{theorem}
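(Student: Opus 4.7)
The plan is to adapt the classical Littlewood inequality for the disk to the half-plane setting, using the Möbius transformation $T_w(z) = (z - w)/(z + \overline{w} - 1)$, which maps $\mathbb{C}_{1/2}$ biholomorphically onto $\mathbb{D}$ and sends $w$ to the origin. Since $\varphi \in \mathfrak{G}_0$, we have $\varphi(\mathbb{C}_0) \subset \mathbb{C}_{1/2}$, and hence $|T_w(\varphi(s))| < 1$ throughout $\mathbb{C}_0$. I would first average the elementary inequality $\log|T_w(\varphi(\sigma + it))| \leq 0$ over $|t| < T$, expand the left-hand side as $\log|\varphi - w| - \log|\varphi + \overline{w} - 1|$, and let $T \to \infty$. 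Since $\varphi - w$ and $\varphi + \overline{w} - 1$ are Dirichlet series with $\sigma_u \leq 0$, the two Jessen functions exist separately by \cite[Theorem 5]{JT45}, yielding
\begin{equation*}
\mathcal{J}_{\varphi-w}(\sigma) \leq \mathcal{J}_{\varphi+\overline{w}-1}(\sigma), \qquad \sigma > 0.
\end{equation*}

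Next I would evaluate the right-hand side explicitly. Because $\Re w > 1/2$, we have $\Re(\varphi(s) + \overline{w} - 1) > \Re w - 1/2 > 0$ throughout $\mathbb{C}_0$, so $\varphi + \overline{w} - 1$ has no zeros there. The Jessen--Tornehave identity $\mathcal{J}'_{\varphi + \overline{w} - 1}(\sigma^+) = -M_{\varphi + \overline{w} - 1, 0}(0, \sigma)$ then shows that the right-derivative of $\mathcal{J}_{\varphi + \overline{w} - 1}$ vanishes on $(0,\infty)$, and convexity forces $\mathcal{J}_{\varphi + \overline{w} - 1}$ to be constant, equal to its limit at $\sigma = +\infty$, namely $\log|\varphi(+\infty) + \overline{w} - 1|$. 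Combining with Theorem~\ref{1.1} in the special case $a = 1$ (for which the integral term vanishes), which gives
\begin{equation*}
M_{\varphi,1}(w,\sigma) = \sigma M_{\varphi,0}(w,\sigma) + \mathcal{J}_{\varphi-w}(\sigma) - \log|\varphi(+\infty) - w|,
\end{equation*}
produces the bound
\begin{equation*}
M_{\varphi,1}(w,\sigma) \leq \sigma M_{\varphi,0}(w,\sigma) + \log\left|\frac{\overline{w}+\varphi(+\infty)-1}{w-\varphi(+\infty)}\right|.
\end{equation*}

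The main technical point, and the step I expect to be hardest, is to justify that $\sigma M_{\varphi,0}(w,\sigma) \to 0$ as $\sigma \to 0^+$, so as to pass to the limit. The first paragraph gives that $\mathcal{J}_{\varphi-w}$ is bounded above, while $\mathcal{J}'_{\varphi-w}(\sigma^+) = -M_{\varphi,0}(w,\sigma) \leq 0$ shows it is non-increasing in $\sigma$; hence $\mathcal{J}_{\varphi-w}(\sigma)$ admits a finite limit as $\sigma \to 0^+$. Convexity of $\mathcal{J}_{\varphi-w}$ then yields the supporting-line inequality
\begin{equation*}
\sigma M_{\varphi,0}(w, 2\sigma) \leq \mathcal{J}_{\varphi-w}(\sigma) - \mathcal{J}_{\varphi-w}(2\sigma),
\end{equation*}
whose right-hand side tends to zero as $\sigma \to 0^+$. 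Therefore $\sigma M_{\varphi,0}(w,\sigma) \to 0$, and passing to the limit in the previous bound gives the desired inequality.
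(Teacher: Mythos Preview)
The paper does not prove Theorem~\ref{5.2}; it is quoted from \cite{BP21} and used as a black box. Your argument is correct and is essentially the standard proof of this Littlewood-type inequality: compose $\varphi$ with the Cayley transform $T_w$ sending $\mathbb{C}_{1/2}$ to $\mathbb{D}$ and $w$ to $0$, average the inequality $\log|T_w\circ\varphi|\leq 0$ to compare the two Jessen functions, observe that $\varphi+\overline{w}-1$ is zero-free in $\mathbb{C}_0$ so that its Jessen function is the constant $\log|\varphi(+\infty)+\overline{w}-1|$, and then use the $a=1$ case of \eqref{eq:meanintbyparts} together with the convexity/boundedness of $\mathcal{J}_{\varphi-w}$ to kill the term $\sigma M_{\varphi,0}(w,\sigma)$ in the limit $\sigma\to 0^+$.

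One small clarification: when you invoke ``Theorem~\ref{1.1} in the special case $a=1$ (for which the integral term vanishes)'', you are using identity \eqref{eq:meanintbyparts}, whose integral carries the factor $a(1-a)$, and not \eqref{3}, whose integral $\int_\sigma^\infty M_{\varphi,0}(w,t)\,dt$ does not vanish for $a=1$. The formula you actually write down is the correct one coming from \eqref{eq:meanintbyparts}, so this is only a matter of pointing to the right equation.
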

	The Schwarz lemma for Dirichlet series, Lemma~\ref{3.4}, allows us to extend this estimate to the Bergman case.
	\begin{proposition}\label{5.3}
		Let $\varphi\in\mathfrak{G}_0$ and $a\geq0$. Then for every $\delta>0$ there exists a constant $C(\varphi, \delta, a)>0$ such that
		\begin{equation*}
		M_{\varphi, 1+a}(w)\leq C(\varphi, \delta, a)\left(\Re w-\frac{1}{2}\right)^{1+a}\frac{\Re\varphi(+\infty)
			-\frac{1}{2}}{\left|w-\varphi(+\infty)\right|^2}
		\end{equation*}
		for every $w\notin D(\varphi(+\infty),\delta).$
	\end{proposition}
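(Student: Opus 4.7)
The plan is to combine the Littlewood-type bound of Theorem~\ref{5.2} with the Schwarz lemma for Dirichlet series, Lemma~\ref{3.4}, in order to extract the correct polynomial factor $(\Re w - \tfrac{1}{2})^{a}$. The key observation is that when $w$ stays away from $\varphi(+\infty)$, the preimages $\varphi^{-1}(\{w\})$ have uniformly bounded real part, which turns the quadratic estimate of Lemma~\ref{3.4} into a clean linear one.

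First I would argue that there exists $\sigma_\infty = \sigma_\infty(\varphi, \delta) > 0$ such that every $s \in \varphi^{-1}(\{w\})$ satisfies $\Re s \leq \sigma_\infty$, provided $w \notin D(\varphi(+\infty),\delta)$. This is a soft consequence of $\sigma_u(\varphi)\leq 0$, which forces $\varphi(s) \to \varphi(+\infty)$ uniformly as $\Re s \to +\infty$, so beyond a certain threshold $\varphi(s) \in D(\varphi(+\infty),\delta)$ and cannot equal $w$. Feeding this cutoff into Lemma~\ref{3.4} yields the linear bound
$$\Re s \leq C(\varphi)(1 + (\Re s)^2)(\Re w - \tfrac{1}{2}) \leq C_1(\varphi,\delta)(\Re w - \tfrac{1}{2}),$$
and in particular $(\Re s)^a \leq C_1^a (\Re w - \tfrac{1}{2})^a$ for every such $s$ and every $a\geq 0$.

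Pulling this uniform factor out of the sum that defines $M_{\varphi,1+a}(w,\sigma,T)$ and passing to the limits $T\to\infty$ and $\sigma\to0^+$, I obtain
$$M_{\varphi,1+a}(w) \leq C_1^a (\Re w - \tfrac{1}{2})^a\, M_{\varphi,1}(w).$$
It then remains to estimate $M_{\varphi,1}(w)$. Starting from Theorem~\ref{5.2} together with the direct identity
$$|\overline{w}+\varphi(+\infty)-1|^2 = |w-\varphi(+\infty)|^2 + 4(\Re w - \tfrac{1}{2})(\Re\varphi(+\infty) - \tfrac{1}{2}),$$
and applying the elementary bound $\log(1+x) \leq x$ for $x\geq 0$, I would deduce
$$M_{\varphi,1}(w) \leq \tfrac{1}{2}\log\left(1+\frac{4(\Re w - \tfrac{1}{2})(\Re \varphi(+\infty) - \tfrac{1}{2})}{|w-\varphi(+\infty)|^2}\right) \leq \frac{2(\Re w - \tfrac{1}{2})(\Re \varphi(+\infty) - \tfrac{1}{2})}{|w-\varphi(+\infty)|^2}.$$
Combining these two displays gives the proposition with $C(\varphi,\delta,a) = 2C_1^a$.

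There is no substantial obstacle in this plan. The only conceptual step is the initial observation that $\Re s$ is uniformly bounded on $\varphi^{-1}(\{w\})$ as long as $w \notin D(\varphi(+\infty),\delta)$; everything else is a routine combination of Theorem~\ref{5.2}, Lemma~\ref{3.4}, and $\log(1+x)\leq x$.
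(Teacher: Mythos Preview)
Your proposal is correct and follows essentially the same approach as the paper: bound the real parts of preimages via Lemma~\ref{3.4} using the cutoff $\sigma_\infty$, pull out the factor $(\Re w-\tfrac12)^a$ to reduce to $M_{\varphi,1}$, and then estimate $M_{\varphi,1}$ from Theorem~\ref{5.2} via an elementary logarithm inequality. The only cosmetic difference is that the paper writes the last step as $\log x \leq \tfrac12(x^2-1)$ applied to the ratio of moduli, which is the same inequality you use after the algebraic identity.
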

	\begin{proof}
		Theorem \ref{5.2} and the inequality $\log x\leq\frac{1}{2}(x^2-1),\,x>0$, together with a trivial computation, shows that for every $w\neq\varphi(+\infty)$,
		\begin{equation*}
		M_{\varphi, 1}(w)\leq 2\left(\Re w-\frac{1}{2}\right)\frac{\Re \varphi(+\infty)-\frac{1}{2}}{|w-\varphi(+\infty)|^2}.
		\end{equation*}
		Given $\delta > 0$, let $\sigma > 0$ be such that $\varphi(s)\in D(\varphi(+\infty),\delta)$ for every $\Re s>\sigma$. By Lemma \ref{3.4}, for $w\notin D(\varphi(+\infty),\delta)$, we thus have that
		\begin{align*}
		M_{\varphi,1+a}(w)&=\lim_{\sigma_0\rightarrow0^+}\lim_{T\rightarrow+\infty}\frac{\pi}{T}\sum\limits_{\substack{s\in\varphi^{-1}(\{w\})\\
				|\Im s|<T\\
				\sigma_0 <\Re s\leq \sigma}}\left(\Re s\right)^{a+1}\\
		&\leq C(\varphi) \left(1+\sigma^2\right)^a\left(\Re w-\frac{1}{2}\right)^{a}M_{\varphi,1}(w)\\
		&\leq C(\varphi,\delta, a)\left(\Re w-\frac{1}{2}\right)^{1+a}\frac{\Re \varphi(+\infty)-\frac{1}{2}}{|w-\varphi(+\infty)|^2}. \qedhere
		\end{align*}
	\end{proof}
	\begin{remark} 
		By combining a similar argument with \cite[Lem. 2.4]{BP21}, we find that
		$$M_{\varphi,1+a}(w) = O\left(\left(\log\frac{1}{|w-\varphi(+\infty)|}\right)^{1+2a}\right)$$
		as $w\rightarrow\varphi(+\infty)$. We do not expect the exponent in this inequality to be the best possible.
	\end{remark}
	Before proceeding with the proof of Theorem \ref{1.3}, we require two simple lemmas, the first of which has the same proof as \cite[Lemma 7.3]{BP21}.
	\begin{lemma}\label{5.4}
		Let $\nu\in\mathbb{C}_{\frac{1}{2}},$ $a\geq0$, and $\delta>0$. Then there exists a constant $C = C(\delta,a)>0$ such that
		\begin{equation*}
		\int\limits_{\frac{1}{2}<\Re w<\theta}|f'(w)|^2\frac{\left(\Re w-\frac{1}{2}\right)^{1+a}}{|w-\nu|^{1+\delta}}dA(w)\leq \frac{C}{\left(\Re\nu-\theta\right)^{1+\delta}}\norm{f}_{-a}^2
		\end{equation*}
		for every $f\in\mathcal{D}_{-a}$ and $\frac{1}{2}<\theta<\Re\nu$.
	\end{lemma}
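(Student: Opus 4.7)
The paper explicitly states that the proof of this lemma runs along the same lines as that of [BP21, Lemma~7.3], which treats the analogous statement for $\mathcal{H}^2$. My plan is to adapt that argument, tracking the extra weight $(\Re w - 1/2)^{a}$ throughout.

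The first step is a geometric decomposition. For $w$ in the strip $\{1/2 < \Re w < \theta\}$ with $\Re\nu > \theta$, one has $|w-\nu| \geq \max(\Re\nu-\theta,\, |\Im w-\Im\nu|)$. Writing $\xi := \Re\nu - \theta$, this decomposes $|w-\nu|^{-(1+\delta)}$ into a near-diagonal part, bounded by $\xi^{-(1+\delta)}$ on $\{|\Im w-\Im\nu| < \xi\}$, and a tail part bounded by $|\Im w-\Im\nu|^{-(1+\delta)}$ on its complement, which is integrable in $\Im w$ with total mass $C\xi^{-\delta}$. The two pieces together should produce the required factor $\xi^{-(1+\delta)}$ once the tail piece is paired with an auxiliary $\xi^{-1}$ extracted from the geometric bound $|w-\nu|\geq\xi$.

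The central analytic ingredient is a mean-square norm estimate for $\mathcal{D}_{-a}$. Using the Bohr--Plancherel identity
\begin{equation*}
\lim_{T\to\infty}\frac{1}{2T}\int_{-T}^T |f'(\sigma+it)|^2 \, dt = \sum_{n\geq 2}|a_n|^2(\log n)^2 n^{-2\sigma}, \qquad \sigma>1/2,
\end{equation*}
together with Fubini and the substitution $u=2(\sigma-1/2)\log n$, one verifies the key bound
\begin{equation*}
\int_{1/2}^{\infty} (\sigma-1/2)^{1+a}\sum_{n\geq 2}|a_n|^2(\log n)^2 n^{-2\sigma}\,d\sigma\leq C\|f\|_{-a}^2,
\end{equation*}
which is the $\mathcal{D}_{-a}$-analogue of the core mean-square estimate used in the Hardy case of [BP21]. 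Combined with the geometric decomposition, this should close the estimate: the near-diagonal portion contributes $\xi^{-(1+\delta)}\cdot\|f\|_{-a}^2$ directly, while the tail portion contributes $\xi^{-\delta}\cdot\xi^{-1}\cdot\|f\|_{-a}^2$.

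The main technical obstacle is the transition from the vertical mean (which computes the $\mathcal{D}_{-a}$-norm via Carlson-type averaging) to the genuine Lebesgue integral weighted against $|\Im w-\Im\nu|^{-(1+\delta)}$: because $|f'|^2$ is almost-periodic in $\Im w$, it is not in general integrable on vertical lines. As in [BP21], this difficulty is to be resolved by proving the estimate first for finite $T$-truncations in $\Im w$, where Lemma~\ref{3.3} supplies uniform pointwise control on compact subsets of $\mathbb{C}_{1/2+\epsilon}$, and then passing to the limit via dominated convergence, with the $L^1$-integrability of the tail weight providing the dominating function.
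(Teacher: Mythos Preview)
Your outline has the right architecture but there is a genuine gap at the most delicate step. The Carlson/Bohr--Plancherel identity you quote is purely \emph{asymptotic}: it computes $\lim_{T\to\infty}\frac{1}{2T}\int_{-T}^T|f'(\sigma+it)|^2\,dt$, but says nothing about $\int_I|f'(\sigma+it)|^2\,dt$ over a finite interval $I$, nor about the integral of $|f'|^2$ against a fixed $L^1(dt)$ weight. Both your near-diagonal piece (integration over $|t-\Im\nu|<\xi$) and your tail piece (integration against $|t-\Im\nu|^{-(1+\delta)}$) require exactly such non-asymptotic control. The remedy you propose---truncate in $T$, invoke Lemma~\ref{3.3}, pass to the limit by dominated convergence---cannot close the estimate: Lemma~\ref{3.3} gives pointwise bounds only on $\mathbb{C}_{1/2+\epsilon}$, with a constant blowing up as $\epsilon\to 0$, while the mass of the integral is concentrated near $\sigma=1/2$. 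Concretely, inserting the pointwise bound $|f'(\sigma+it)|^2\lesssim(\sigma-\tfrac12)^{-(3+a)}\|f\|_{-a}^2$ yields a divergent $\sigma$-integral. Your bookkeeping of $\xi$-powers is also off: once $|w-\nu|^{-(1+\delta)}$ has been replaced by $|\Im w-\Im\nu|^{-(1+\delta)}$ on the tail, the geometric inequality $|w-\nu|\geq\xi$ has already been spent, and there is no legitimate source for the ``auxiliary $\xi^{-1}$''.

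What the argument of \cite[Lemma~7.3]{BP21} actually rests on is a \emph{non-asymptotic} Carleson-type embedding for Dirichlet series on vertical lines---the local embedding of \cite[Theorem~4.11]{HLS97}, the $H_i^2(\mathbb{C}_{1/2},\beta)$ estimate of Lemma~\ref{3.2}, or equivalently a Montgomery--Vaughan mean value inequality---which controls $\int_{\mathbb{R}}|g(\sigma+it)|^2K(t)\,dt$ by $\sum_n|b_n|^2 n^{1-2\sigma}$ for suitable kernels $K$, where $g(s)=\sum_n b_n n^{-s}$. Applied with $g=f'$ and then integrated against $(\sigma-\tfrac12)^{1+a}\,d\sigma$, the extra factor $n^{1-2\sigma}$ is absorbed by the Gamma integral and produces precisely $\|f\|_{-a}^2$; the required power of $(\Re\nu-\theta)$ then comes out of the kernel. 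Without an ingredient of this type, the passage from the mean-square identity to the weighted area integral cannot be completed.
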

	\begin{lemma}\label{5.5}
		Let $\varphi\in\mathfrak{G}_0$ and $\{f_n\}_{n\geq1}$ be a sequence in $\mathcal{D}_{-a},\,a\geq0$, that converges weakly to $0$. Then, for every $\theta>\frac{1}{2}$,
		\begin{equation}\label{44}
		\lim\limits_{n\rightarrow+\infty}\left(|f_n(\varphi(+\infty))|^2+\frac{2^{1+a}}{\Gamma(2+a)\pi}\int\limits_{\Re w\geq \theta}|f_n'(w)|^2M_{\varphi,1+a}(w)dA(w)\right)=0.
		\end{equation}
		\begin{proof}
			The point evaluation at $\nu=\varphi(+\infty)$ is bounded, and thus
			\begin{equation*}
			\lim\limits_{n\to \infty} f_n(\varphi(+\infty))=\lim\limits_{n\to \infty}\langle f_n,k_{\nu,-a}\rangle_{\mathcal{D}_{-a}}=0.
			\end{equation*}
			Next, applying Montel's theorem for $\mathcal{H}^\infty$ \cite[Lemma 18]{BAY02}, it is easy to see that the sequence of the derivatives $\{f_n'\}_{n\geq1}$ converges uniformly to $0$ in $\mathbb{C}_{\theta}$. The dominated convergence theorem, which can be applied in light of Lemma \ref{3.3}, thus gives us \eqref{44}.
		\end{proof}
	\end{lemma}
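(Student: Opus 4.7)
My plan is to handle the two terms in \eqref{44} separately. The first term is immediate: since $\varphi\in\mathfrak{G}_0$ forces $\varphi(+\infty)\in\mathbb{C}_{1/2}$, point evaluation at $\varphi(+\infty)$ is realized as the inner product with the reproducing kernel $k_{\varphi(+\infty),-a}\in\mathcal{D}_{-a}$, so weak convergence $f_n\to 0$ gives $f_n(\varphi(+\infty))=\langle f_n, k_{\varphi(+\infty),-a}\rangle_{\mathcal{D}_{-a}}\to 0$.

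For the integral term, the strategy is to combine a uniform dominator coming from Lemma \ref{3.3} with uniform convergence of $f_n'$ to $0$, and then invoke the dominated convergence theorem. Weak convergence implies norm boundedness, so $M:=\sup_n\|f_n\|_{-a}<\infty$. Applying Lemma \ref{3.3} with parameter $-a\leq 1$ on the half-plane $\mathbb{C}_\theta$, I get the uniform pointwise bound
\begin{equation*}
|f_n'(w)|^2\leq C^2 M^2\, 4^{-\Re w},\qquad w\in\mathbb{C}_\theta,
\end{equation*}
for a constant $C=C(a,\theta-1/2)$. This is an integrable dominator against $M_{\varphi,1+a}(w)\,dA(w)$ on $\{\Re w\geq\theta\}$ because applying the Stanton formula \eqref{4} to the function $g(s)=2^{-s}\in\mathcal{D}_{-a}$ expresses $\int_{\mathbb{C}_{1/2}} 4^{-\Re w}M_{\varphi,1+a}(w)\,dA(w)$ as a constant multiple of $\|C_\varphi g\|_{-a}^2-|g(\varphi(+\infty))|^2$, which is finite by the boundedness of $C_\varphi$ on $\mathcal{D}_{-a}$ established in Theorem \ref{3.7}.

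For the pointwise convergence of $f_n'$, weak convergence gives $f_n\to 0$ at each $s\in\mathbb{C}_{1/2}$ by testing against $k_{s,-a}$. The family $\{f_n\}$ is uniformly bounded on $\mathbb{C}_\theta$ (from the same reproducing kernel estimate), so a Montel-type argument for $\mathcal{H}^\infty$ of Dirichlet series, as in Lemma 18 of \cite{BAY02}, yields normality, and hence uniform convergence of $f_n$, and therefore of $f_n'$, to $0$ on $\mathbb{C}_\theta$. The dominated convergence theorem then gives \eqref{44}.

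The only real subtlety I expect is producing the uniform convergence of $f_n'$ on the full half-plane $\mathbb{C}_\theta$ rather than merely on compact subsets; once the $\mathcal{H}^\infty$ Montel theorem is invoked, the dominator from Lemma \ref{3.3} combined with the boundedness of $C_\varphi$ on $\mathcal{D}_{-a}$ reduces everything to a routine application of dominated convergence.
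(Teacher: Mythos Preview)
Your proposal is correct and follows essentially the same route as the paper's own proof: both handle the first term via the reproducing kernel, invoke Bayart's $\mathcal{H}^\infty$ Montel theorem to obtain uniform convergence of $f_n'$ on $\mathbb{C}_\theta$, and conclude by dominated convergence using the bound from Lemma~\ref{3.3}. Your argument is actually slightly more detailed than the paper's, since you explicitly justify the integrability of the dominator $4^{-\Re w}M_{\varphi,1+a}(w)$ by applying the Stanton formula to $g(s)=2^{-s}$ together with the boundedness of $C_\varphi$ from Theorem~\ref{3.7}; the paper leaves this step implicit in the phrase ``in light of Lemma~\ref{3.3}''.
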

	We are ready to prove Theorem~\ref{1.3}.
	\begin{proof}[\textbf{Proof of Theorem \ref{1.3}}]
		We first assume that the operator $C_\varphi$ is compact on $\mathcal{D}_{-a}$. Suppose $\{s_n\}_{n\geq1}\subset \mathbb{C}_{\frac{1}{2}}$ is an arbitrary sequence such that $\Re s_n\rightarrow\frac{1}{2}$. We observe that the induced sequence of normalized reproducing kernels $\{K_{s_n,-a}\}_{n\geq1
		}$ converges weakly to $0$, as $n\rightarrow\infty$, and therefore 
		\begin{equation}\label{45}
		\lim\limits_{n\rightarrow+\infty}\norm{C_\varphi(K_{s_n,-a})}_{-a}=0.
		\end{equation}
		Without loss of generality we can assume that for every $n\geq 1$,
		$$\Re s_n<\frac{2\Re\varphi(+\infty)+\frac{1}{2}}{3},$$
		so that the disks $D(s_n,r_n)$, $r_n=\frac{\Re s_n-\frac{1}{2}}{2}$, do not contain $\varphi(+\infty)$.
		
		By Lemma \ref{5.1} there exists a constant $C=C(a)>0$ such that
		\begin{equation*}
		\left|K_{s_n,a}'(w)\right|^2\geq C\left(\Re s_n-\frac{1}{2}\right)^{-a-3}
		\end{equation*}
		whenever $w\in D(s_n,r_n)$. Therefore
		\begin{align*}
		\norm{C_\varphi(K_{s_n,a})}^2&\geq C(a)\int\limits_{\mathbb{C}_{\frac{1}{2}}}|K_{s_n,a}'(w)|^2M_{\varphi,1+a}(w)dA(w)\\
		&\geq C(a)\left(\Re s_n-\frac{1}{2}\right)^{-a-3}\int\limits_{D(s_n,r_n)}M_{\varphi,1+a}(w)dA(w).
		\end{align*}
		The submean value property of Theorem \ref{4.10} therefore yields that
		\begin{equation*}
		M_{\varphi,a+1}(s_n)\left(\Re s_n-\frac{1}{2}\right)^{-a-1}\leq C(a) \norm{C_\varphi(K_{s_n,a})}^2.
		\end{equation*}
		We conclude, by \eqref{45}, that
		\begin{equation*}
		\lim_{\Re w\rightarrow\frac{1}{2}}\frac{M_{\varphi,1+a}(w)}{\left( \Re w-\frac{1}{2}\right)^{1+a}}=0.
		\end{equation*}
		
		Conversely, we suppose that \eqref{5} holds and argue as in the proof of \cite[Theorem 1.4]{BP21}. Let $\{f_n\}_{n\geq1}$ be a sequence in $\mathcal{D}_{-a}$ that converges weakly to $0$, such that $\norm{f_n}_{-a}\leq 1$ for all $n\geq 1$.
		Fix $\delta \in (0,1)$. By Proposition \ref{5.3} and \eqref{5}, there exists for every $\epsilon>0$ a $\theta$,  $\frac{1}{2}<\theta<\frac{\frac{1}{2}+\Re\varphi(+\infty)}{2}$, such that
		$$M_{\varphi, 1+a}(w)\leq \epsilon \frac{\left(\Re w-\frac{1}{2}\right)^{1+a}}{\left|w-\varphi(+\infty)\right|^{1+\delta}}$$
		for all $\frac{1}{2}<\Re w<\theta$.
		This and Lemma \ref{5.4} gives us that
		\begin{align*}
		\int\limits_{\frac{1}{2}<\Re w<\theta}|f_n'(w)|^2M_{\varphi,1+a}(w)dA(w)&\leq \epsilon \int\limits_{\frac{1}{2}<\Re w<\theta}|f_n'(w)|^2\frac{\left(\Re w-\frac{1}{2}\right)^{1+a}}{\left|w-\varphi(+\infty)\right|^{1+\delta}}dA(w)\\
		&\leq \epsilon C(\varphi,\delta,a).
		\end{align*}
		Combined with Lemma \ref{5.5} and Theorem \ref{1.2} we see that $\norm{C_\varphi(f_n)}_{-a}\rightarrow0$. Thus $C_\varphi$ is compact on $\mathcal{D}_{-a}$.
	\end{proof}
	\begin{remark}
		We can extract an alternative proof for the boundedness of $C_\varphi \colon \mathcal{D}_{-a} \to \mathcal{D}_{-a}$ from the second half of the proof of Theorem~\ref{1.3}.
	\end{remark}
	
	\subsection{Composition operators on Dirichlet-type spaces}
	The proof of Theorem \ref{1.4} is completely analogous to the proof of necessity in Theorem~\ref{1.3}. We leave the details to the reader. The following example illustrates that the necessary condition \eqref{6} of Theorem \ref{1.4} is not sufficient for the composition operator to be bounded on the Dirichlet space $\mathcal{D}_a$, $a\geq \frac{1}{2}$.
	\begin{example}\label{5.6}
		For $\nu\in \mathbb{C}_{\frac{1}{2}}$, the function $g_\nu(z)=\frac{(\overline{\nu}-1)z+\nu}{1-z}$ maps the unit disk $\mathbb{D}$ onto $\mathbb{C}_\frac{1}{2}$. Consider the Dirichlet series $\varphi(s)=g_\nu(2^{-s})$. For $1/2 \leq a \leq 1$, by \eqref{32}, we have that
		\begin{align*}
		M_{\varphi,1-a}(w)=\log(2)^{2-a}\log\left|\frac{1}{g_\nu^{-1}(w)}\right|^{1-a}=\log(2)^{2-a}\left(\log\left|\frac{w+\overline{\nu}-1}{w-\nu}\right|\right)^{1-a}.
		\end{align*}
		Thus, $\varphi$ satisfies \eqref{6}. However, for sufficiently small $\varepsilon > 0$, 
		\begin{align*}
		\norm{C_\varphi(2^{-s})}_a^2&\geq C(a)\int\limits_{\frac{1}{2}}^{\frac{1}{2}+\epsilon}2^{-2\sigma}\int\limits_{-\infty}^{\infty}\left(\frac{(\sigma-\frac{1}{2})(\Re\nu-\frac{1}{2})}{|\sigma+\overline{\nu}-1+it|^2}\right)^{1-a}dtd\sigma=\infty.
		\end{align*}
	\end{example}
	
	We finish the article by noting that when $\Im \varphi$ is bounded, it is simple to establish the converse to Theorem~\ref{1.4}. Note that if $C_\varphi$ is bounded on $\mathcal{D}_a$, $0 < a \leq 1$, then Theorem~\ref{1.2} also implies that $M_{\varphi, 1-a}$ is locally integrable at $w = \varphi(+\infty)$.
	
	\begin{theorem}\label{5.7}
		Let $0 < a < 1$, and suppose that $\varphi\in\mathfrak{G}_0$ has bounded imaginary part. If the counting function $M_{\varphi, 1-a}$ is locally integrable and satisfies \eqref{6}, then $C_\varphi$ is bounded on $\mathcal{D}_a$. In addition, if we assume that $\varphi$ satisfies \eqref{7}, then $C_\varphi$ is compact on $\mathcal{D}_a$.
	\end{theorem}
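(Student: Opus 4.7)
The plan is to apply the Stanton formula from Theorem~\ref{1.2},
\begin{equation*}
\|C_\varphi f\|_a^2 = |f(\varphi(+\infty))|^2 + \frac{2^{1-a}}{\Gamma(2-a)\pi}\int_{\mathbb{C}_{1/2}}|f'(w)|^2 M_{\varphi, 1-a}(w)\,dA(w),
\end{equation*}
and exploit the bounded imaginary part of $\varphi$, which confines the measure $M_{\varphi, 1-a}\,dA$ to a horizontal strip $S_M = \{w \in \mathbb{C}_{1/2}: |\Im w| \leq M\}$. First I would fix $\delta > 0$ small enough that $\overline{D(\varphi(+\infty), \delta)} \subset \mathbb{C}_{1/2+\eta}$ for some $\eta > 0$, and decompose the integral into a near piece over $D(\varphi(+\infty), \delta)$ and a far piece over $S_M \setminus D(\varphi(+\infty), \delta)$. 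The near piece is dominated by $C\|f\|_a^2$ using Lemma~\ref{3.3} to bound $|f'|$ pointwise on the closed disk, together with the assumed local integrability of $M_{\varphi, 1-a}$. On the far piece, the condition \eqref{6} gives $M_{\varphi, 1-a}(w) \leq C(\delta)(\Re w - 1/2)^{1-a}$, so that boundedness of $C_\varphi$ reduces to the Carleson-type inequality
\begin{equation*}
\int_{S_M\cap \mathbb{C}_{1/2}}|f'(w)|^2(\Re w - 1/2)^{1-a}\,dA(w) \leq C_{M,a}\|f\|_a^2.
\end{equation*}

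The core of the argument is this Carleson estimate. Writing $w = 1/2 + u + iv$ and using Fubini, the left-hand side becomes $\int_0^\infty u^{1-a}\int_{-M}^M|f'(1/2+u+iv)|^2\,dv\,du$. For each $u > 0$, the shifted Dirichlet series $g_u(s) = f'(s+u) = -\sum_{n\geq 2}a_n(\log n)\,n^{-u}n^{-s}$ lies in $\mathcal{H}^2$, since the uniform boundedness in $n \geq 2$ of $(\log n)^{2-a}n^{-2u}$ implies $\|g_u\|_0^2 = \sum_{n\geq 2}|a_n|^2(\log n)^2 n^{-2u} < \infty$. Applying Lemma~\ref{3.2} to $g_u$ and noting $1/(\beta^2+v^2) \geq 1/(\beta^2+M^2)$ on $|v| \leq M$ gives
\begin{equation*}
\int_{-M}^M|f'(1/2+u+iv)|^2\,dv \leq C_M\sum_{n \geq 2}|a_n|^2(\log n)^2 n^{-2u}.
\end{equation*}
Integrating in $u$ against $u^{1-a}$, interchanging the sum and integral, and using $\int_0^\infty u^{1-a}n^{-2u}\,du = \Gamma(2-a)(2\log n)^{a-2}$ then bounds the Carleson integral by $C_{M,a}\sum_{n\geq 2}|a_n|^2(\log n)^a \leq C_{M,a}\|f\|_a^2$.

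For compactness, assume \eqref{7} holds and let $\{f_n\} \subset \mathcal{D}_a$ satisfy $\sup_n \|f_n\|_a < \infty$ and $f_n \rightharpoonup 0$. Given $\varepsilon > 0$, choose $\theta > 1/2$ so small that $M_{\varphi, 1-a}(w) \leq \varepsilon(\Re w - 1/2)^{1-a}$ for $\Re w < \theta$; then the contribution to $\int|f_n'|^2 M_{\varphi, 1-a}\,dA$ from $\{\Re w < \theta\}$ is at most $\varepsilon C_{M,a}\sup_n\|f_n\|_a^2$ by the Carleson estimate just established. The contribution from $\{\Re w \geq \theta\}$, together with the point evaluation $|f_n(\varphi(+\infty))|^2$, tends to zero by Montel's theorem for $\mathcal{H}^\infty$ and dominated convergence, exactly as in the proof of Lemma~\ref{5.5}. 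Thus $\|C_\varphi f_n\|_a \to 0$, giving compactness of $C_\varphi$. The main obstacle is the Carleson estimate, but its proof via Lemma~\ref{3.2} is available precisely because the bounded imaginary part of $\varphi$ confines $M_{\varphi, 1-a}$ to a strip of fixed height, converting the global Poisson-weighted bound into a fixed-window $L^2([-M,M])$ estimate with constant depending only on $M$ and $a$.
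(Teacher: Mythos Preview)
Your proposal is correct and follows essentially the same route as the paper: both use the Stanton formula (Theorem~\ref{1.2}), split the integral into a neighborhood of $\varphi(+\infty)$ (handled by Lemma~\ref{3.3} and local integrability) and its complement in the strip (handled via \eqref{6} and a Carleson estimate), and then treat compactness by the weak-null-sequence argument of Lemma~\ref{5.5}. The only difference is cosmetic: for the key inequality $\int_{-M}^{M}|f'(1/2+u+iv)|^2\,dv \leq C_M \sum_{n\ge2}|a_n|^2(\log n)^2 n^{-2u}$, the paper invokes the local embedding theorem for $\mathcal{H}^2$ from \cite{HLS97} directly, whereas you derive the same bound from Lemma~\ref{3.2} by restricting the Poisson-weighted integral to $|v|\le M$; the two devices are equivalent here.
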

	\begin{proof}
		
		We present the proof of the first part of the theorem only. Let $\delta > 0$ be small. By the hypothesis of local integrability and Lemma~\ref{3.3}, it holds that
		$$\int\limits_{ D(\varphi(+\infty),\delta)}|f'(w)|^2M_{\varphi, 1-a}(w)dA(w)\leq C\norm{f}_a^2.$$
		
		Let $T = \sup |\Im \varphi|$. The local embedding theorem for the Hardy space $\mathcal{H}^2 = \mathcal{D}_0$ \cite[Theorem 4.11]{HLS97} says that
		$$\int\limits_{-T}^T |g(1/2 + it)|^2 \, dt \leq C\|g\|_0, \qquad g \in \mathcal{D}_0.$$
		Applying this with \eqref{6}, we find that
		\begin{align*}
		\int\limits_{\mathbb{C}_{\frac{1}{2}}\setminus D(\varphi(+\infty),\delta)}|f'(w)|^2M_{\varphi, 1-a}(w)dA(w) &\leq C  \int\limits_{0}^{\infty}\int\limits_{-T}^{T}\left|f'\left(\frac{1}{2}+\sigma+it\right)\right|^2dt \, \sigma^{1-a}d\sigma\\
		&\leq C \int\limits_{0}^{\infty}\sum\limits_{n\geq 2}\frac{|a_n|^2\log(n)^2}{n^{2\sigma}}\sigma^{1-a}d\sigma
		\leq C \norm{f}_a^2.
		\end{align*}
		In light of Theorem \ref{1.2}, this shows that $C_\varphi \colon \mathcal{D}_a \to \mathcal{D}_a$ is bounded. 
	\end{proof}
	From the proof it is clear that Theorem~\ref{5.7} also holds under milder decay assumptions on $M_{\varphi, 1-a}(w)$ as $|\Im w| \to \infty$. 
	\bibliographystyle{amsplain-nodash} 
	\bibliography{ref} 
\end{document}